\documentclass[oneside,11pt,a4paper,reqno,ps]{article}
\usepackage[a4paper,top=2.5cm,bottom=2.5cm,left=2.5cm,right=2.5cm,
bindingoffset=5mm]{geometry}
\pagestyle{myheadings}
\usepackage[latin1]{inputenc}
\usepackage[english]{babel}
\usepackage[T1]{fontenc}
\usepackage{syntonly}
\usepackage{graphicx}
\usepackage{amssymb}	
\usepackage{lmodern}
\usepackage{amsthm}
\usepackage{microtype}
\usepackage{amsmath}
\usepackage{braket}
\usepackage{eurosym}
\usepackage{amssymb}
\usepackage{amstext}
\usepackage{color}
\usepackage{xcolor}
\usepackage{url}
\usepackage{amsmath,amsthm,amssymb,amscd,epsfig,esint, mathrsfs,graphics,color}
\usepackage{wrapfig}
\usepackage{verbatim}
\usepackage{hyperref}
\usepackage{listings}
\usepackage{bookmark}
\usepackage{csquotes}
\hypersetup{%
	pdfpagemode={UseOutlines},
	bookmarksopen,
	pdfstartview={FitH},
}
\usepackage[backend=bibtex, hyperref]{biblatex}
\setcounter{MaxMatrixCols}{10}
\setlength\parindent{0pt}
\newcommand{\numberset}{\mathbb}
\newcommand{\R}{\numberset{R}}

\newcommand{\lAbs}{\left|}
\newcommand{\rAbs}{\right|}

\addbibresource{BIBLIOGRAFIA-3.bib}
\theoremstyle{plain}
\newtheorem{thm}{Theorem}[section]

\newtheorem{proposition}[thm]{Proposition}
\newtheorem{lemma}[thm]{Lemma}
\newtheorem{corollary}[thm]{Corollary}

\theoremstyle{definition}
\newtheorem{definition}[thm]{Definition}

\newtheorem{remark}[thm]{Remark}

\def\XXint#1#2#3{{\setbox0=\hbox{$#1{#2#3}{\int}$} 
		\vcenter{\hbox{$#2#3$}}\kern-.5\wd0}}


\def\div{{\rm div}}

\def\supp{\mathrm{supp}}
\def\loc{\mathrm{loc}}
\numberwithin{equation}{section} \makeatletter

\renewcommand{\p@enumi}{\thesection.}
\title{\textbf{Higher differentiability results for solutions to a class of non-homogeneouns elliptic problems under sub-quadratic growth conditions}}
\author{Albert Clop, Andrea Gentile, Antonia Passarelli di Napoli} 

\begin{document}
	\maketitle
	\begin{abstract}
		\noindent{We prove  a sharp higher differentiability result for local minimizers of functionals of the form
			$$\mathcal{F}\left(w,\Omega\right)=\int_{\Omega}\left[ F\left(x,Dw(x)\right)-f(x)\cdot
			w(x)\right]dx$$
			with {non autonomous} integrand $F(x,\xi)$ which is convex with respect to the gradient variable, under $p$-growth conditions, with $1<p<2$. The main novelty here is that the results are obtained assuming that the partial map $x\mapsto D_\xi F(x,\xi)$ has weak derivatives in some Lebesgue space $L^q$ and the datum $f$ is assumed to belong to  a suitable Lebesgue space $L^r$.\\
		We also prove that it is possible to weaken the assumption on the datum $f$ and on the map $x\mapsto D_\xi F(x,\xi)$, if the minimizers are assumed to be a priori bounded.}
	\end{abstract}
	
	\bigskip
	
	{\footnotesize{ \emph{Mathematics Subject
				Classification}. {35J47; 35J70; 49N60.}
			
			{\it Key words and phrases}. {Convex functionals;
				Lipschitz regularity; Higher differentiability. }}}
	
	\bigskip
	\section{Introduction}
	Let $\Omega\subset\R^n$ be a bounded open set, with $n>2$. For $N\geq 1$ and $w:\Omega\to \R^N$, we consider the following non autonomous, non homogeneous functional 
	\begin{equation}\label{modenergy}
		\mathcal{F}\left(w,\Omega\right)=\int_{\Omega}\left[ F\left(x,Dw(x)\right)-f(x)\cdot
		w(x)\right]dx
	\end{equation}
In this paper we deal with the regularity properties of local minimizers of $\mathcal{F}$. To fix the ideas, we assume that $f\in L^r_\loc(\Omega, \R^N )$ is given, and $2<r<n$. The Carath\'eodory function $F: \Omega\times \R^{n\times
	N}\to [0,+\infty)$ is such that $\xi\mapsto F(x,\xi) $ is $C^2\left(\R^{n\times N}\right)$ for a.e. $x\in\Omega$. Morever, we assume that there exists real numbers $p\in(1, 2)$ and $\mu\in[0, 1]$ such that the following set of assumptions is satisfied:
\begin{itemize}
\item There exist positive constants $\ell, L$ such that
\begin{equation}\label{F1}
	\ell \left(\mu^2+\left|\xi\right|^2\right)^\frac{p}{2}\le F(x,\xi)\le L\left(\mu^2+\left|\xi\right|^2\right)^\frac{p}{2}
\end{equation}
for a. e. $x\in\Omega$ and every $\xi\in\R^{n\times N}$.
\item There exists a positive constant $\nu>0,$ such that
\begin{equation}\label{F2}
	\langle D_{\xi}F(x,\xi)-D_{\xi}F(x,\eta),\xi-\eta\rangle \ge \nu\left(\mu^2+\left|\xi\right|^2+\left|\eta\right|^2\right)^\frac{p-2}{2}|\xi-\eta|^{2}
\end{equation}
for a. e. $x\in\Omega$ and every $\xi, \eta\in \R^{n\times N}$.
\item There exists a positive constant $L_1>0,$ such that
\begin{equation}\label{F3}
	\left|D_{\xi}F(x,\xi)-D_{\xi}F(x,\eta)\right| \le L_1\left(\mu^2+\left|\xi\right|^2+\left|\eta\right|^2\right)^\frac{p-2}{2}|\xi-\eta|
\end{equation}
for a. e. $x\in\Omega$ and every $\xi, \eta\in \R^{n\times N}$.
\item There exists a non-negative function $g\in L^{q}_{\loc}\left(\Omega\right)$ such that
\begin{equation}\label{F4}
	\left|D_{\xi}F\left(x,\xi\right)-D_{\xi}F\left(y,\xi\right)\right|\le\left(g(x)+g(y)\right)\left(\mu^2+\left|\xi\right|^2\right)^\frac{p-1}{2}\left|x-y\right|
\end{equation}
for a.e. $x, y\in\Omega$ and  every $\xi\in\R^{n\times N}$.
\end{itemize}
In the last years, there has been great interest in understanding the differentiability of minimizers under some weak differentiability assumption for $F$ in the $x$ variable. Examples of this are the works by Passarelli di Napoli \cite{32,33} for the case $p\ge2$. In this paper, we intend to explain how the situation changes if $p$ stays in the lowest range of growths, that is, $1<p<2$. \\
\\
Among the big quantity of previous contributions, we wish to mention works by Di Benedetto \cite{DiB} and Manfredi \cite{M}, all in mid 80's, in which it is proven that local minimizers have H\"older continuous derivatives under different instances, all of them involving autonomous functionals $F$ (i.e. non depending on $x$) with growth exponent $p>1$, and with independent term $f=0$.  Next, in \cite{gm}, the H\"{o}lder continuity of the gradient of solutions has been proved in case $p\ge2$ and H\"{o}lder continuous coefficients with $f=0$.\\ 
It was not until the work of Acerbi and Fusco \cite{AF} that an extension of this result was provided for functionals $F$ that could depend on $x$ in a H\"older continuous manner. In order to prove the Lipschitz continuity of local minimizers, in the same paper (\cite{AF}), for $1<p<2$ but still for $f=0$, the second order regularity  of the local minimizers is established in case of constant coefficients. Very recently, A. Gentile \cite{Gentile2} extended the higher differentiability result of \cite{AF} to the case when the dependence of $F$ in $x$ is of Sobolev type. In presence of a bounded independent term $f$,  Tolksdorf studied functionals $F$ that are Lipschitz continuous in the space variable, and obtained that local minimizers are H\"older continuous.\\
In case of degenerate elliptic functionals, we mention the recent papers \cite{BRASCO2010652, COLOMBO201494, ColomboFigalli1}, in wich Lipschitz regularity of solutions is established assuming $f\in L^r$, with $r>n$.
\\
In the present paper, our first goal  is to get for the minimizers not the H\"older continuous first derivatives, but up to $L^p$ second order derivatives. We wish to do this in two ways. First, by requiring $F$ the minimal possible regularity in $x$. Second, by finding optimal conditions on $f$ that make the whole scheme work.\\
\\
Actually, the first result we prove in this paper is the following. Below, we denote $V_p(\xi)=\left(\mu^2+|\xi|^2\right)^\frac{p-2}{4}\xi$.

\begin{thm}\label{CGPThm1}
Let $\Omega\subset\R^n$ be a bounded open set, and $1<p<2$. Let $u\in W^{1, p}_{\loc}(\Omega, \R^N)$ be a local minimizer of the functional \eqref{modenergy}, under the assumptions \eqref{F1}--\eqref{F4}, with  

$$f\in L^{\frac{np}{n(p-1)+2-p}}_{\loc}\left(\Omega\right)\qquad\mbox{ and }\qquad g\in L^{n}_{\loc}\left(\Omega\right).$$

Then $V_p\left(Du\right)\in W^{1, 2}_{\loc}\left(\Omega\right)$, and the estimate
\begin{equation}\label{mainestimateVp}
\aligned
&\int_{B_{\frac{R}{2}}}\left|D\left(V_p\left(Du(x)\right)\right)\right|^2dx \\
&\leq\frac{c}{R^{\beta\left(n, p\right)}}\left[\int_{B_{R}} \left(\mu^2+\left|Du(x)\right|^2\right)^\frac{p}{2} +\int_{B_R} |f(x)|^\frac{np}{n\left(p-1\right)+2-p}dx +c\int_{B_{R}}g^{n}(x)dx +\left|B_R\right|\right],
\endaligned
\end{equation}
holds true for any ball $B_{R}\Subset\Omega$, with $\beta(n, p)>0.$
\end{thm}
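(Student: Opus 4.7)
The plan is to carry out a difference-quotient argument on the Euler--Lagrange equation of $\mathcal{F}$. Since $u$ is only in $W^{1,p}_{\loc}$, the formal computations below must first be performed on a regularized problem: I replace $F$ by a smooth, strictly convex approximation $F_\eps(x,\xi)=(\rho_\eps\ast F(\cdot,\xi))(x)+\eps(1+|\xi|^2)^{s/2}$ with $s>2$, mollify $f$ and $g$, and work with the corresponding minimizers $u_\eps$, which are smooth enough that all integrations by parts are legitimate; the uniform estimates derived below will then pass to the limit as $\eps\to 0$ by lower semicontinuity. For $B_{R/2}\subset B_R\Subset\Omega$, a standard cutoff $\eta\in C_c^\infty(B_R)$ with $\eta\equiv 1$ on $B_{R/2}$ and $|D\eta|\le c/R$, and $|h|$ less than a quarter of $\mathrm{dist}(\supp\eta,\partial B_R)$, testing the Euler--Lagrange equation with $\varphi=\tau_{-h}(\eta^2\tau_h u)$ yields (dropping $\eps$ from the notation)
\[
\int\eta^2\,\tau_h[D_\xi F(\cdot,Du)]\cdot\tau_h Du
+2\int\eta\,\tau_h[D_\xi F(\cdot,Du)]\cdot(D\eta\otimes\tau_h u)
=\int\tau_h f\cdot\eta^2\tau_h u.
\]

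Decomposing $\tau_h[D_\xi F(\cdot,Du)]$ into a ``frozen-coefficient'' piece $D_\xi F(x+h,Du(x+h))-D_\xi F(x+h,Du(x))$ plus a ``coefficient-defect'' piece $D_\xi F(x+h,Du(x))-D_\xi F(x,Du(x))$ lets me bring \eqref{F2}--\eqref{F4} to bear. Paired with $\tau_h Du$, the frozen piece yields through \eqref{F2} the coercive term $c\int\eta^2|\tau_h V_p(Du)|^2$ via the standard equivalence $(\mu^2+|\xi|^2+|\zeta|^2)^{(p-2)/2}|\xi-\zeta|^2\approx|V_p(\xi)-V_p(\zeta)|^2$. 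The defect piece is controlled by \eqref{F4}, and the two cutoff terms by \eqref{F3}, producing integrals of the shape $|h|\int\eta^2(g(x+h)+g(x))(\mu^2+|Du|^2)^{(p-1)/2}|\tau_h Du|$ and lower-order variants. Since $p<2$, every remaining $|\tau_h Du|$ has to be converted via the sub-quadratic interpolation $|\tau_h Du|\le c(\mu^2+|Du|^2+|Du(x+h)|^2)^{(2-p)/4}|\tau_h V_p(Du)|$; Young's inequality then absorbs a small multiple of $\int\eta^2|\tau_h V_p(Du)|^2$ into the LHS and leaves residuals of the form $\int g^2(\mu^2+|Du|^2)^{p/2}$, which I dominate by H\"older with exponents $(n/2,n/(n-2))$ together with the Sobolev embedding $V_p(Du)\in L^{2n/(n-2)}_{\loc}$, the very object being bootstrapped.

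The forcing term is handled by the discrete integration-by-parts identity $\int\tau_h f\cdot\eta^2\tau_h u=-\int f\cdot\tau_{-h}(\eta^2\tau_h u)$, followed by H\"older with exponent $r=np/(n(p-1)+2-p)$, whose conjugate is $r'=np/(n+p-2)<2$ precisely because $p<2$, and by $\|\tau_{-h}\phi\|_{r'}\le|h|\|D\phi\|_{r'}$. Splitting $D(\eta^2\tau_h u)=2\eta D\eta\,\tau_h u+\eta^2\tau_h Du$, the first summand contributes $|h|\|Du\|_{L^{r'}}$, finite thanks to the Sobolev bound on $V_p(Du)$, while the second, after the same $V_p$-interpolation of $|\tau_h Du|$ and H\"older with exponents $(2/r',2/(2-r'))$, produces a factor equal to $(\mu^2+|Du|^2)^{np/(2(n-2))}=(|V_p(Du)|^2)^{n/(n-2)}$, i.e. exactly the borderline Sobolev power of $V_p(Du)$. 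This matching is the whole point of the threshold $r$: the resulting product is bounded by the Sobolev norm of $V_p(Du)$ times $\|\tau_h V_p(Du)\|_{L^2}/|h|$, and a last Young lets a small multiple of $\int\eta^2|\tau_h V_p(Du)|^2/|h|^2$ be absorbed on the left. Dividing by $|h|^2$, sending $h\to 0$ via the standard difference-quotient criterion, and then $\eps\to 0$ in the approximation scheme yields $V_p(Du)\in W^{1,2}_{\loc}$ and \eqref{mainestimateVp}, with $\beta(n,p)$ recording the $|D\eta|\le c/R$ scaling compounded through the H\"older and Sobolev steps. The principal obstacle is exactly this exponent bookkeeping: in the $p<2$ regime every conversion between $\tau_h Du$ and $\tau_h V_p(Du)$ generates a power of $(\mu^2+|Du|^2)$ that must land precisely on $2^*=2n/(n-2)$ for $V_p(Du)$, and organizing the estimates so that this bootstrap closes while keeping all constants uniform in the regularization parameter $\eps$ is the essential technical content of the proof.
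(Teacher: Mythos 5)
Your overall strategy — difference quotients on the Euler--Lagrange system, decomposing $\tau_h[D_\xi F(\cdot,Du)]$ into a frozen-coefficient piece controlled by \eqref{F2}--\eqref{F3} and a coefficient-defect piece controlled by \eqref{F4}, converting $|\tau_h Du|$ to $|\tau_h V_p(Du)|$ via Lemma \ref{lemma6GP}, and using H\"older with conjugate pair $\bigl(\tfrac{np}{n(p-1)+2-p},\tfrac{np}{n-2+p}\bigr)$ so that the leftover power of $(\mu^2+|Du|^2)$ lands on the Sobolev-critical exponent of $V_p(Du)$ — is exactly the paper's. Two places, however, contain genuine gaps.

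\textbf{(1) You cannot absorb the $\sigma$-terms by Young alone; the iteration Lemma on concentric balls is indispensable.} After you estimate the cutoff term $I_3$ and the forcing term $J_1$, the small multiples of $\int|\tau_h V_p(Du)|^2$ that appear on the right are \emph{not} carried by $\eta^2$: the step $\|\tau_{-h}\tau_h u\|_{L^{r'}(B_t)}\le|h|\|\tau_h Du\|_{L^{r'}(B_{\tilde t})}$ from Lemma \ref{le1}, and the analogous manipulation in $I_3$, force the integral onto a ball $B_{\tilde t}$ \emph{strictly larger} than the ball $B_{\tilde s}$ on which $\eta\equiv1$. The same happens with the coefficient-defect term $I_2$: after H\"older with exponents $(n/2,\,n/(n-2))$ and Sobolev on $V_p(Du)$, one lands on $\sigma\int_{B_{\lambda r}}|DV_p(Du)|^2$ on a larger ball, cf.~\eqref{V_pSobolev}. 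Since the coercive LHS lives on the smaller ball, direct absorption fails, and the paper explicitly closes the estimate by Lemma \ref{iter} (hole-filling) over a family of concentric radii $\tfrac{R}{2}\le r<\tilde s<t<\tilde t<\lambda r<R$. Your proposal never invokes this, and as written the bootstrap does not close. Note that this is not a cosmetic point: the introduction flags the careful use of the iteration lemma as the device that makes the sharp threshold $f\in L^{np/(n(p-1)+2-p)}$ reachable. Also keep in mind that $J_2$ (the $\eta D\eta\otimes\tau_h u$ piece paired with $f$) comes with a distinct H\"older pairing $\bigl(\tfrac{np}{n(p-1)+2},\tfrac{np}{n-2}\bigr)$ and produces the exponent $\tfrac{np}{n(p-1)+2}$ in $\beta(n,p)=\max\{2,\tfrac{np}{n(p-1)+2}\}$; your formulation collapses $J_1$ and $J_2$ to the same exponent $r'$, which loses the correct power of $R$ in \eqref{mainestimateVp}.

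\textbf{(2) The higher-order regularization $\eps(1+|\xi|^2)^{s/2}$ with $s>2$ breaks the comparison argument.} Since $u$ is only in $W^{1,p}_{\loc}$ with $p<2<s$, the competitor energy $\int\eps(1+|Du|^2)^{s/2}$ may well be infinite, so $\mathcal F_\eps(u,B_{\tilde R})$ need not be finite, and the chain of inequalities by which one shows the regularized minimizers converge back to $u$ (strict convexity plus equality of the energies) collapses. The paper avoids this by only mollifying $F$ in $x$ and $f$, $g$, so that the approximating functional retains the same $p$-growth, $\mathcal F_\eps(u,B_{\tilde R})\to\mathcal F(u,B_{\tilde R})$ holds, and the needed starting regularity $V_p(Dv_\eps)\in W^{1,2}_{\loc}$ of the approximating minimizers follows from the classical results for bounded $f$, $g$ cited after Corollary \ref{corollary1}. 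If you wish to keep your elevated-growth perturbation, you would at least need to mollify $u$ as the boundary datum of the approximating problems (so that the competitor is admissible for every $\eps$) and control the convergence of the perturbation term, which introduces additional, nontrivial work.
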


At this point, it is worth mentioning that $|D^2u|\simeq\left|D\left(V_p\left(Du\right)\right)\right|\left(\mu^2+\left|Du\right|^2\right)^\frac{2-p}{4}$ so that the above theorem establishes a weighted estimate for the second derivative. Furthermore, from Young's inequality one has that $\left|D^2u\right|^p\simeq\left|D\left(V_p\left(Du\right)\right)\right|^2+\left(\mu^2+\left|Du\right|^2\right)^\frac{p}{2}$ so that, in particular, $V_p\left(Du\right)\in W^{1,2}$ implies $u\in W^{2,p}$. See Section \ref{auxiliaryfunction} for details.

Also, for what concerns the assumptions on $f$, we observe that
$$
2<\frac{np}{n\left(p-1\right)+2-p}<n,
$$
for any $n>2$ and $1<p<2$. The proof of Theorem \ref{CGPThm1} is based on the combination of an a priori estimate and a suitable approximation argument. In order to achieve the a priori estimate under the sharp integrability assumption on the independent term $f$, besides the use of the difference quotient method, we need to apply carefully the well known iteration Lemma on concentric balls to control the terms with critical integrability.  We'd like to mention that previous higher differentiability result in the subquadratic non standard growth case has been recently obtained in \cite{MPdN} with independent term $f\in L^{\frac{p}{p-1}}$. Especially, note that
$$\frac{np}{n(p-1)+2-p}<\frac{p}{p-1}\iff 1<p<2.$$
Also in case of degenerate elliptic equations with sub-quadratic growth, in the recent paper \cite{AMBROSIO2022125636}, a fractional higher differentiability has been obtained under a Besov or Sobolev assumption on the datum $f$.\\
In \cite{CGHP}, an higher differentiability result has been established for local minimizers of \eqref{modenergy} in the case $p\ge 2$ and for $f\in L^n\log^\alpha L$, with $\alpha>0$. For other Lipschitz regularity and higher differentiability results for solutions to non-homogeeneous elliptic problems we also refer to \cite{Beck-Mingione, de2021lipschitz} and to \cite{kuusi2018vectorial} for regularity results of solutions to problems with measure data.
\\
We want to stress that, for what concerns the regularity of the funtion $f$, Theorem \ref{CGPThm1} is a sharp result: it is not possible to weaken the assumption on the integrability of the datum, as we will see with a counterexample in Section \ref{Counterexample} below.\\
\\
Independently to the previous problem, a new interest has arosen in the last years. It consists in describing the regularity properties of local minimizers which one assumes apriori bounded. The reason for this is that many times local boundedness of minimizers is available much before any sort of weak differentiability. Also, as it will be clear from our results, apriori boundedness of minimizers helps in relaxing the assumptions for $f$, at least when $n$ is not too small. Results in this direction are available so far just for $p\ge2$. An example of this can be found in Carozza, Kristensen and Passarelli di Napoli \cite{CKP}, where $F$ is assumed Lipschitz-continuous in $x$ (see also \cite{capone2020regularity}). Similar analysis have been done by Giova and Passarelli di Napoli in \cite{GiovaPassarelli} assuming only Sobolev regularity for $F$ in the $x$ variable. The second goal in the present paper consists of exploring if this kind of results is available also under the assumption $1<p<2$. We obtained the following.

\begin{thm}\label{inftythm}
	Let $\Omega\subset\R^n$ be a bounded open set, $1<p<2$ and $u\in W^{1, p}_{\loc}\left(\Omega, \R^N\right)\cap L^\infty_{\loc}\left(\Omega\right)$ be a local minimizer of the functional \eqref{modenergy} under assumptions \eqref{F1}--\eqref{F4} with
	
	$$f\in L^{\frac{p+2}{p}}_{\loc}\left(\Omega\right)\qquad\mbox{ and }\qquad g\in L^{p+2}_{\loc}\left(\Omega\right).$$
	
	Then $V_p\left(Du\right)\in W^{1,2}_{\loc}\left(\Omega\right)$ and the estimate
\begin{equation}\label{inftyestimate}
\aligned
&\int_{B_{\frac{R}{2}}}\left|D\left(V_p \left(Du(x)\right) \right)\right|^2dx\\
&\leq\frac{c\|u\|_{L^\infty(B_{4R})}}{R^{\frac{p+2}{p}}}
\left[
\int_{B_{4R}} \left(\mu^2+\left|Du(x)\right|^2\right)^\frac{p}{2}dx+\int_{B_{R}}g^{p+2}(x)dx+\int_{B_R}\left|f(x)\right|^\frac{p+2}{p}dx+\left|B_R\right|+1
\right]
	\endaligned
	\end{equation}
	holds for any ball $B_{4R}\Subset\Omega$.
\end{thm}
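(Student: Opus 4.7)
The strategy follows the lines of Theorem~\ref{CGPThm1}: we approximate $F$ by a sequence of smooth, non-degenerate integrands $F_\varepsilon$ satisfying (\ref{F1})--(\ref{F4}) uniformly in $\varepsilon$, let $u_\varepsilon$ be the minimizer of the corresponding functional in a ball $B_{4R}\Subset\Omega$ with boundary datum $u$, prove an $\varepsilon$-uniform a priori estimate, and pass to the limit by lower semicontinuity. A crucial preliminary point is that, by a standard truncation/comparison argument based on the convexity of $F_\varepsilon$, the approximating minimizers inherit the bound $\|u_\varepsilon\|_{L^\infty(B_{4R})}\le\|u\|_{L^\infty(B_{4R})}$.

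For the a priori estimate we fix $\eta\in C_c^\infty(B_R)$ with $\eta\equiv 1$ on $B_{R/2}$ and $|D\eta|\le c/R$, together with an increment $|h|<R/4$, and then test the Euler--Lagrange equation for $u_\varepsilon$ with $\varphi=\tau_{-h}(\eta^2\tau_h u_\varepsilon)$. After the discrete integration by parts and the expansion $D(\eta^2\tau_h u_\varepsilon)=\eta^2\tau_h Du_\varepsilon+2\eta\,D\eta\,\tau_h u_\varepsilon$, we split $\tau_h D_\xi F_\varepsilon(\cdot,Du_\varepsilon)$ into its $\xi$- and $x$-increments. Using the ellipticity (\ref{F2}) together with the identity $(\mu^2+|\xi|^2+|\zeta|^2)^{(p-2)/2}|\xi-\zeta|^2\simeq|V_p(\xi)-V_p(\zeta)|^2$, we arrive at the Caccioppoli-type inequality
$$c\int_{B_R}\eta^2|\tau_h V_p(Du_\varepsilon)|^2\,dx\le \mathcal I_1+\mathcal I_2+\mathcal R,$$
where $\mathcal I_1$ collects the cross terms from the $\xi$-variation paired with $D\eta\cdot\tau_h u_\varepsilon$, $\mathcal I_2$ gathers the contributions coming from the $x$-variation of $D_\xi F_\varepsilon$ (that is, from (\ref{F4}), involving $g$), and $\mathcal R=\int\tau_h f\cdot\eta^2\tau_h u_\varepsilon\,dx$ is the datum term.

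The key novelty, replacing the Sobolev embedding used in Theorem~\ref{CGPThm1}, is the pointwise interpolation
$$|\tau_h u_\varepsilon(x)|^2\le 2\|u_\varepsilon\|_{L^\infty(B_{4R})}\,|\tau_h u_\varepsilon(x)|,$$
combined with the classical difference quotient bound $\int|\tau_h u_\varepsilon|\,dx\le c|h|\int|Du_\varepsilon|\,dx$. Applied to every occurrence of $|\tau_h u_\varepsilon|$ in $\mathcal I_1,\mathcal I_2,\mathcal R$, this pair produces the linear factor $\|u\|_{L^\infty(B_{4R})}$ appearing on the right-hand side of (\ref{inftyestimate}). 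We then apply Young's inequality: with exponents $(2,2)$ in $\mathcal I_1$ to absorb $\epsilon\int\eta^2|\tau_h V_p(Du_\varepsilon)|^2$ into the left-hand side, and with the conjugate exponents $(p+2)/2$ and $(p+2)/p$ in $\mathcal I_2$ and $\mathcal R$, which are precisely the ones matched by $g\in L^{p+2}_{\loc}$ and $f\in L^{(p+2)/p}_{\loc}$. The same exponent $(p+2)/p$ ends up on the cutoff derivative $|D\eta|$, producing the $R^{-(p+2)/p}$ scaling visible in (\ref{inftyestimate}).

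After summing the estimates, dividing by $|h|^2$ and letting $|h|\to 0$ (via the difference quotient characterization of $W^{1,2}$), one obtains (\ref{inftyestimate}) for $u_\varepsilon$ with constants independent of $\varepsilon$. The passage $\varepsilon\to 0$ is then concluded by lower semicontinuity of the $L^2$-norm together with the strong convergence $u_\varepsilon\to u$ in $W^{1,p}_{\loc}$. The main technical difficulty lies in the sub-quadratic regime, where the weight $(\mu^2+|\xi|^2+|\zeta|^2)^{(p-2)/2}$ is singular for small gradients: one must invoke the $V_p$-structure repeatedly while trading each $|\tau_h u_\varepsilon|$ factor against $|h|\,\|Du_\varepsilon\|_{L^1}$, so as to keep the delicate balance between the $L^\infty$-interpolation, Young's exponents, and the correct $|h|^2$ decay.
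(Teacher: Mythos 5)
Your proposal has two genuine gaps that prevent it from closing the argument.

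\textbf{The $L^\infty$ bound for the approximating minimizers.} You claim that a ``standard truncation/comparison argument based on the convexity of $F_\varepsilon$'' yields $\|u_\varepsilon\|_{L^\infty(B_{4R})}\le\|u\|_{L^\infty(B_{4R})}$. This is precisely the step that fails when $f\neq 0$: truncating a competitor decreases $\int F_\varepsilon(\cdot,Dw)$ but has \emph{no} controllable effect on the linear term $-\int f_\varepsilon\cdot w$, which can increase under truncation. For vector-valued $u$ and general $f_\varepsilon$ there is no sign condition to save the comparison, so the truncated function need not have smaller energy. This is exactly the obstruction the paper identifies, and it is overcome not by truncation but by the penalization method of \cite{CKP}: one first establishes (in Theorem~\ref{approxmthm}) a uniform second-order estimate for minimizers of $\mathcal{F}_{m,\varepsilon}\left(w,\cdot\right)$ which includes the singular term $(\left|w\right|-a)_+^{2m}$ with $a=\|u\|_{L^\infty}$, and then passes to the limit $m\to\infty$, using that the $L^{2m}$-norm of $u_{m,\varepsilon}$ is asymptotically controlled by $\|u\|_{L^\infty}$ (cf.\ \eqref{limsupmeps}). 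Your proposal skips this double limit entirely and relies on a comparison that does not hold.

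\textbf{The $h$-scaling of the datum and coefficient terms.} The pointwise interpolation $|\tau_h u_\varepsilon|^2\le 2\|u_\varepsilon\|_{L^\infty}|\tau_h u_\varepsilon|$ together with $\int|\tau_h u_\varepsilon|\,dx\le c|h|\int|Du_\varepsilon|\,dx$ cannot produce the $|h|^2$ decay needed on the terms in $|h|$. Tracking powers of $|h|$ in the term $\int f\cdot \tau_{-h}\left(\eta^2\tau_h u_\varepsilon\right)$: one $|h|$ comes from Lemma~\ref{le1} applied to $\tau_{-h}(\cdot)$, and then the remaining factor $|\tau_h u_\varepsilon|^{\frac{p+2}{2}}$ traded against $\|u_\varepsilon\|_\infty^{p/2}|\tau_h u_\varepsilon|$ yields at best $|h|^{1+2/(p+2)}$, which is strictly below $|h|^2$ for $p\in(1,2)$; the mismatch is not repaired by the Young step since the deficit persists after raising to the conjugate exponent. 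To balance the exponents $\frac{p+2}{p}$ for $f$ and $p+2$ for $g$ with the $V_p$-structure, one in fact needs $Du_\varepsilon\in L^{p+2}_{\loc}$, and this is precisely what Lemma~\ref{lemma5GP} (the Gagliardo--Nirenberg type inequality \eqref{2.2GP}) provides: under the a priori assumption $V_p(Dv)\in W^{1,2}_\loc$, the $L^\infty$ (resp.\ $L^{2m}$) bound on $v$ upgrades $Dv$ to $L^{p+2}$ (resp.\ $L^{m(p+2)/(m+1)}$). That inequality, which your proposal never invokes, is also how $\|u\|_{L^\infty}$ actually enters the final estimate (through the factor $\|v\|^2_{L^\infty(\supp\phi)}$ in \eqref{2.2GP}, or $(\int|v|^{2m})^{1/(m+1)}$ in \eqref{2.1GP}), not through the crude pointwise interpolation of $|\tau_h u|^2$.

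In short: without the penalization device you have no uniform $L^\infty$ control of the approximating minimizers, and without the Gagliardo--Nirenberg lemma you cannot close the difference-quotient estimate with the stated exponents for $f$ and $g$. Both are essential and both are missing from your proposal.
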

Notice here that for $n>2$ and $1<p<2$ we have $2<\frac{p+2}{p}<n$ . Also, by comparing the assumptions on $f$ both in Theorems \ref{CGPThm1} and \ref{inftythm}, it is worth noticing that for $1<p<2$ one has
$$
\frac{p+2}{p}<\frac{np}{n(p-1)+2-p}\hspace{1cm}\Longleftrightarrow\hspace{1cm}n>p+2.
$$  
Therefore, Theorem \ref{inftythm} improves Theorem \ref{CGPThm1} whenever $n\ge4$.\\ 
In proving Theorem \ref{inftythm}, once  the a priori estimate is established, the more delicate issue is to construct some approximating problems in a convenient way. For this, these aproximating problems need to be smooth with respect to the dependence on the $x$-variable and on the datum $f$. Also, they need to have minimizers whose $L^{r}$ norm is close to the $L^\infty$ norm of the minimizer of the original problem for $r$ sufficiently large. We overcome this difficulties by using the penalization method introduced  in \cite{CKP}. Still, we need to prove second order estimates for the approximating minimizers which, as far as we know, are available only for $p\ge 2$ ( see \cite{GiovaPassarelli}).
\\
The paper is organized as follows. In section 2 we collect definitions and preliminary results.  Section 3,contains  the proof of Theorem \ref{CGPThm1}. In Section 4, we give the counterexample showing the optimality of  the assimption on the datum. In Section 5 we give the proof of the higher differentiability of the local minimizers of a class of variational integrals with a singular penalisation term. In Section 6, we give the proof of Theorem \ref{inftythm}.

\medskip

\section{Preliminary results}\label{preliminaryresults}
	
	We will  follow the usual convention and denote by $c$ or $C$ a
	general constant that may vary on different occasions, even within
	the same line of estimates. Relevant dependencies on parameters and
	special constants will be suitably emphasized using parentheses or
	subscripts. All the norms we use  will be the standard Euclidean
	ones and denoted by $|\cdot |$ in all cases. In particular, for
	matrices $\xi$, $\eta \in \R^{n\times N}$ we write $\langle \xi,
	\eta \rangle : = \text{trace} (\xi^T \eta)$ for the usual inner
	product of $\xi$ and $\eta$, and $| \xi | : = \langle \xi,
	\xi\rangle^{\frac{1}{2}}$ for the corresponding euclidean norm. By
	$B_r(x)$  we will denote the ball in $\R^n$ centered at $x$
	of radius $r$. The integral mean of a function  $u$ over a ball
	$B_r(x)$  will be denoted by $u_{x,r}$, that is
	$$ u_{x,r}:=\frac{1}{\left|B_r(x)\right|}\int_{B_r(x)}u(y)dy,$$
	where $|B_r(x)|$ is the Lebesgue measure of the ball in
	$\mathbb{R}^{n}$. If no confusion  arises, we shall omit the
	dependence on the center.

	The following lemma has important applications in the so called
	hole-filling method. Its proof can be found, for example, in
	\cite[Lemma 6.1]{23} .
	\medskip
	\begin{lemma}\label{iter} Let $h:[r, R_{0}]\to \mathbb{R}$ be a nonnegative bounded function and $0<\theta<1$,
		$A, B\ge 0$ and $\gamma>0$. Assume that
		$$
		h(s)\leq \theta h(t)+\frac{A}{\left(t-s\right)^{\gamma}}+B,
		$$
		for all $r\leq s<t\leq R_{0}$. Then
		$$
		h(r)\leq \frac{c A}{(R_{0}-r)^{\gamma}}+cB ,
		$$
		where $c=c(\theta, \gamma)>0$.
	\end{lemma}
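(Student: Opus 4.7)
The plan is to carry out the classical geometric-iteration argument that gives a standard iteration lemma of this type. The idea is to build a sequence of radii $\rho_i \in [r,R_0)$ increasing to $R_0$, apply the given inequality on consecutive pairs $(\rho_i,\rho_{i+1})$, and then send $i\to\infty$, using boundedness of $h$ to kill the remainder and choosing geometric steps so that the error series converges.

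Concretely, I would fix a parameter $\tau\in(0,1)$ to be chosen later and set $\rho_0=r$ and $\rho_{i+1}=\rho_i+(1-\tau)\tau^i(R_0-r)$, so that $\rho_i=r+(1-\tau^i)(R_0-r)\uparrow R_0$ and $\rho_{i+1}-\rho_i=(1-\tau)\tau^i(R_0-r)$. Applying the hypothesis with $s=\rho_i$, $t=\rho_{i+1}$ gives
\begin{equation*}
h(\rho_i)\le \theta\, h(\rho_{i+1})+\frac{A}{(1-\tau)^{\gamma}(R_0-r)^{\gamma}}\,\tau^{-i\gamma}+B.
\end{equation*}
Iterating this inequality $k$ times starting from $i=0$ and using $\rho_0=r$ yields
\begin{equation*}
h(r)\le \theta^{k}h(\rho_k)+\frac{A}{(1-\tau)^{\gamma}(R_0-r)^{\gamma}}\sum_{i=0}^{k-1}\bigl(\theta\tau^{-\gamma}\bigr)^{i}+B\sum_{i=0}^{k-1}\theta^{i}.
\end{equation*}

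The key choice is to take $\tau\in(0,1)$ with $\tau^{\gamma}>\theta$, which is possible because $\theta<1$; this makes the geometric factor $\theta\tau^{-\gamma}<1$ so that the first series converges to a constant depending only on $\theta$ and $\gamma$. Since $h$ is bounded on $[r,R_0]$ and $0<\theta<1$, the remainder term $\theta^{k}h(\rho_k)$ tends to $0$ as $k\to\infty$, and the second series converges to $1/(1-\theta)$. Passing to the limit $k\to\infty$ produces the stated bound $h(r)\le cA/(R_0-r)^{\gamma}+cB$ with $c=c(\theta,\gamma)>0$.

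There is no real obstacle here; the only delicate point is the calibration $\tau^{\gamma}>\theta$, which simultaneously guarantees summability of the $A$-series and keeps the factor $(1-\tau)^{-\gamma}$ finite (hence absorbable into the constant $c(\theta,\gamma)$). The boundedness hypothesis on $h$ is used precisely and only to discard the tail $\theta^{k}h(\rho_k)$; without it the conclusion would fail in general.
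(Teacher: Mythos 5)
Your proof is correct and is essentially the standard geometric-iteration argument; the paper itself does not give a proof but refers to Giusti's book (reference [23], Lemma 6.1), where exactly this argument appears. The sequence of radii $\rho_i = r + (1-\tau^i)(R_0 - r)$, the telescoping iteration, the calibration $\theta < \tau^\gamma < 1$ to make the $A$-series summable, and the use of boundedness of $h$ only to kill the remainder $\theta^k h(\rho_k)$ are all exactly as in the standard source.
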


The following Gagliardo-Niremberg type inequalities are stated in
\cite{GiovaPassarelli}. For the proofs see the Appendix A of \cite{CKP} and Lemma
3.5 in \cite{GiannettiPassa} (in case $p(x)\equiv p, \, \forall x$)
respectively.

\begin{lemma}\label{lemma5GP}
	For any $\phi\in C_0^1(\Omega)$ with $\phi\ge0$, $\mu\in[0, 1]$, and any $C^2$ map $v:\Omega\to\R^N$, we have
	
	\begin{eqnarray}\label{2.1GP}
		&&\int_\Omega\phi^{\frac{m}{m+1}(p+2)}(x)\left|Dv(x)\right|^{\frac{m}{m+1}(p+2)}dx\cr\cr
		&\le&(p+2)^2\left(\int_\Omega\phi^{\frac{m}{m+1}(p+2)}(x)\left|v(x)\right|^{2m}dx\right)^\frac{1}{m+1}\cr\cr
		&&\cdot\left[\left(\int_\Omega\phi^{\frac{m}{m+1}(p+2)}(x)\left|D\phi(x)\right|^2\left(\mu^2+\left|Dv(x)\right|^2\right)^\frac{p}{2}dx\right)^\frac{m}{m+1}\right.\cr\cr
		&&\left.+n\left(\int_\Omega\phi^{\frac{m}{m+1}(p+2)}(x)\left(\mu^2+\left|Dv(x)\right|^2\right)^\frac{p-2}{2}\left|D^2v(x)\right|^2dx\right)^\frac{m}{m+1}\right],
	\end{eqnarray}
	
	for any $p\in(1, \infty)$ and $m>1$. Moreover
	
	\begin{eqnarray}\label{2.2GP}
		&&\int_{\Omega}\phi^2(x)\left(\mu^2+\left|Dv(x)\right|^2\right)^\frac{p}{2}\left|Dv(x)\right|^2dx\cr\cr
		&\le&c\left\Arrowvert v	\right\Arrowvert_{L^\infty\left(\supp(\phi)\right)}^2\int_\Omega\phi^2(x)\left(\mu^2+\left|Dv(x)\right|^2\right)^\frac{p-2}{2}\left|D^2v(x)\right|^2dx\cr\cr
		&&+c\left\Arrowvert v\right\Arrowvert_{L^\infty\left(\supp(\phi)\right)}^2\int_\Omega\left(\phi^2(x)+\left|D\phi(x)\right|^2\right)\left(\mu^2+\left|Dv(x)\right|^2\right)^\frac{p}{2}dx,
	\end{eqnarray}
	
	for a constant $c=c(p).$
\end{lemma}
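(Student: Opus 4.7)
Both inequalities are weighted Gagliardo--Nirenberg interpolations. By standard density, I may assume $v\in C^2(\Omega,\R^N)$, and the strategy in each case is to integrate by parts once in the left--hand side to trade one derivative off $v$, and then apply Hölder's (and, in the first inequality, Young's) inequality to redistribute the resulting weights so that they match the targeted factors on the right.

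For \eqref{2.2GP}, I set $I:=\int_\Omega\phi^2(\mu^2+|Dv|^2)^{p/2}|Dv|^2\,dx$ and write $|Dv|^2=\sum_{i,j}(D_iv^j)^2$. Integrating by parts in $x_i$ (boundary terms vanish because $\phi$ has compact support) produces three summands corresponding to the derivative hitting $\phi^2$, the coefficient $(\mu^2+|Dv|^2)^{p/2}$, or the remaining $D_iv^j$; each is dominated pointwise by either $|v|\phi|D\phi|(\mu^2+|Dv|^2)^{(p+1)/2}$ or $|v|\phi^2(\mu^2+|Dv|^2)^{p/2}|D^2v|$. Using $|v|\le\|v\|_{L^\infty(\supp\phi)}$ and applying Young's inequality with a small parameter $\varepsilon$ to each product, one splits off a term $\varepsilon\phi^2(\mu^2+|Dv|^2)^{(p+2)/2}$; the elementary identity $(\mu^2+|Dv|^2)^{(p+2)/2}=\mu^2(\mu^2+|Dv|^2)^{p/2}+(\mu^2+|Dv|^2)^{p/2}|Dv|^2$ then lets the $|Dv|^2$-piece be absorbed into $I$ for $\varepsilon$ small, while the $\mu^2$-piece feeds into the lower-order term on the right. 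The complementary Young remainders are precisely the $\|v\|_{L^\infty(\supp\phi)}^2\phi^2(\mu^2+|Dv|^2)^{(p-2)/2}|D^2v|^2$ and $\|v\|_{L^\infty(\supp\phi)}^2(\phi^2+|D\phi|^2)(\mu^2+|Dv|^2)^{p/2}$ terms appearing in \eqref{2.2GP}.

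For \eqref{2.1GP}, set $\alpha:=\frac{m}{m+1}(p+2)$ and rewrite the LHS as $\int_\Omega\phi^\alpha|Dv|^{\alpha-2}(Dv\cdot Dv)\,dx$ (replacing $|Dv|^{\alpha-2}$ with $(|Dv|^2+\delta)^{(\alpha-2)/2}$ and letting $\delta\to 0$ at the end in the subquadratic range $\alpha<2$). Integration by parts on one factor of $Dv$ yields
$$\int_\Omega\phi^\alpha|Dv|^\alpha\,dx\le\alpha\int_\Omega|v|\,\phi^{\alpha-1}|D\phi|\,|Dv|^{\alpha-1}\,dx+C\int_\Omega|v|\,\phi^\alpha|Dv|^{\alpha-2}|D^2v|\,dx.$$
On each of these two integrals I would apply Hölder's inequality with the three conjugate exponents $(2m,\,2,\,2m/(m-1))$, splitting the integrand so that the first factor in $L^{2m}$ produces $\bigl(\int\phi^\alpha|v|^{2m}\bigr)^{1/(2m)}$, the second factor in $L^2$ produces either $\bigl(\int\phi^\alpha|D\phi|^2(\mu^2+|Dv|^2)^{p/2}\bigr)^{1/2}$ or $\bigl(\int\phi^\alpha(\mu^2+|Dv|^2)^{(p-2)/2}|D^2v|^2\bigr)^{1/2}$, and the third factor in $L^{2m/(m-1)}$, after bounding $|Dv|^{\alpha-1}$ or $|Dv|^{\alpha-2}$ by the corresponding power of $(\mu^2+|Dv|^2)^{1/2}$, is dominated by $\bigl(\int\phi^\alpha(\mu^2+|Dv|^2)^{\alpha/2}\bigr)^{(m-1)/(2m)}\le C\bigl(\int\phi^\alpha|Dv|^\alpha+\mu^\alpha|\supp\phi|\bigr)^{(m-1)/(2m)}$. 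Young's inequality with conjugate exponents $(2m/(m-1),\,2m/(m+1))$ then absorbs the self-similar power of $\int\phi^\alpha|Dv|^\alpha$ back into the left--hand side; the remaining product, raised to the power $2m/(m+1)$, has precisely the exponents $1/(m+1)$ and $m/(m+1)$ claimed in \eqref{2.1GP}. The algebraic identity $\alpha(m+1)/m=p+2$ is what makes the powers of $|Dv|$ in the third Hölder factor close onto $(\mu^2+|Dv|^2)^{\alpha/2}$, so that the loop can be closed.

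The main obstacle is the algebraic bookkeeping in \eqref{2.1GP}: the three Hölder factors must be chosen so that two of them reproduce the targeted right--hand side norms while the third, after applying $|Dv|\le(\mu^2+|Dv|^2)^{1/2}$, yields exactly a fractional power of $\int\phi^\alpha|Dv|^\alpha$ that Young's inequality can absorb; the identity $\alpha(m+1)/m=p+2$ is the arithmetic miracle that makes the constraints solvable. A secondary technical issue is the regularization at $Dv=0$ in the range $\alpha<2$, handled by the $\delta$-approximation described above.
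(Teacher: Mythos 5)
Strictly speaking there is no ``paper proof'' to compare against: the paper does not prove Lemma~\ref{lemma5GP}, it refers the reader to Appendix~A of~\cite{CKP} for~\eqref{2.1GP} and to Lemma~3.5 of~\cite{GiannettiPassa} for~\eqref{2.2GP}. Your overall strategy --- integrate one derivative off $Dv$, then redistribute the weights by H\"older/Young --- is the standard mechanism behind such weighted Gagliardo--Nirenberg bounds, but as you have sketched it the exponent bookkeeping does not close in either case.

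For~\eqref{2.1GP}: after the integration by parts the ``$D\phi$'' term carries only $\phi^{\alpha-1}|D\phi|$, with $\alpha=\frac{m}{m+1}(p+2)$. If you apply H\"older with exponents $\bigl(2m,\,2,\,\tfrac{2m}{m-1}\bigr)$ and insist that the first two factors reproduce $\bigl(\int\phi^\alpha|v|^{2m}\bigr)^{1/(2m)}$ and $\bigl(\int\phi^\alpha|D\phi|^2(\mu^2+|Dv|^2)^{p/2}\bigr)^{1/2}$, the power of $\phi$ left for the third factor is $\alpha-1-\tfrac{\alpha}{2m}-\tfrac{\alpha}{2}=\tfrac{(m-1)\alpha}{2m}-1$, whereas your Young absorption needs the third factor to close onto $\bigl(\int\phi^\alpha(\mu^2+|Dv|^2)^{\alpha/2}\bigr)^{(m-1)/(2m)}$, i.e.\ $\phi$ to the power $\tfrac{(m-1)\alpha}{2m}$. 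You are short exactly one unit of $\phi$. (On the $D^2v$ term the available power is $\alpha$, not $\alpha-1$, and the count does close.) Since $\phi$ vanishes on $\partial(\supp\phi)$, this missing power cannot just be discarded; the $D\phi$ term must be handled differently.

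For~\eqref{2.2GP}: you assert that after Young the $\mu^2$-piece of $\varepsilon\phi^2(\mu^2+|Dv|^2)^{(p+2)/2}$ ``feeds into the lower-order term on the right.'' But that piece, $\varepsilon\mu^2\int\phi^2(\mu^2+|Dv|^2)^{p/2}$, carries no factor of $\|v\|_{L^\infty}^2$, while both terms on the right of~\eqref{2.2GP} do, so it cannot be fed there. The obstruction is not cosmetic: it comes from the Laplacian term $\int v\,\phi^2(\mu^2+|Dv|^2)^{p/2}\Delta v$, and any Young split of $|v|\phi^2(\mu^2+|Dv|^2)^{p/2}|D^2v|$ whose self-similar half is $\le\phi^2(\mu^2+|Dv|^2)^{p/2}|Dv|^2$ leaves a complementary half containing $(\mu^2+|Dv|^2)^{p/2}|Dv|^{-2}$, which blows up as $|Dv|\to 0$ when $\mu>0$. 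What your argument actually yields is \eqref{2.2GP} with an additional right-hand term of the form $c\,\mu^2\int\phi^2(\mu^2+|Dv|^2)^{p/2}$, and you would need to either prove that this extra term is dominated by the stated right-hand side (it is not, for a constant depending only on $p$) or flag that the target inequality must carry such a lower-order $\mu$-correction.
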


By a density argument, one can easily check that estimates \eqref{2.1GP} and \eqref{2.2GP} are still true for any map $v\in W^{2,p}_{\loc}(\Omega)$ such that $\left(\mu^2+\left|Dv\right|^2\right)^\frac{p-2}{2}\left|D^2v\right|^2\in L^1_{\loc}\left(\Omega\right)$.

For further needs, we recall the following result, whose proof can be found in \cite[Lemma 4.1]{BRASCO2010652}.

\begin{lemma}\label{Lemma8}
For any $\delta>0$, $m>1$ and $\xi, \eta\in\R^k$, let
$$
W(\xi)=\left(\left|\xi\right|-\delta\right)^{2m-1}_+\frac{\xi}{\left|\xi\right|}\qquad\mbox{ and }\qquad\tilde{W}(\xi)=\left(\left|\xi\right|-\delta\right)^{m}_+\frac{\xi}{\left|\xi\right|}.
$$	

Then there exists a positive constant $c(m)$ such that

$$
\left<W(\xi)-W(\eta), \xi-\eta\right>\ge c(m)\left|\tilde{W}(\xi)-\tilde{W}(\eta)\right|^2.
$$ 

for any $\eta, \xi\in\R^k.$
\end{lemma}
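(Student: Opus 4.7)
The plan is to exploit the radial structure of both vector fields. Set $a=|\xi|$, $b=|\eta|$, $r(t)=(t-\delta)_+$, and $\tau = \langle \xi,\eta\rangle/(ab)\in[-1,1]$ (the trivial cases $\xi=0$ or $\eta=0$ reduce the lemma to a one-variable statement that is immediate, since $a\ge r(a)$ gives the bound with constant $1$). Direct expansion yields
$$
\langle W(\xi)-W(\eta),\xi-\eta\rangle = a\,r(a)^{2m-1}+b\,r(b)^{2m-1}-\tau\bigl[b\,r(a)^{2m-1}+a\,r(b)^{2m-1}\bigr],
$$
$$
|\tilde W(\xi)-\tilde W(\eta)|^2 = r(a)^{2m}+r(b)^{2m}-2\tau\,r(a)^m r(b)^m,
$$
so both sides are affine functions of $\tau$ and the problem is reduced to choosing a suitable constant $c(m)$.

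I would then verify that, for any $c(m)\le 1$, the coefficient of $\tau$ in the difference $\langle W(\xi)-W(\eta),\xi-\eta\rangle - c(m)\,|\tilde W(\xi)-\tilde W(\eta)|^2$ is non-positive: this follows at once from the elementary bounds $a\ge r(a)$, $b\ge r(b)$ combined with the AM--GM inequality, which give $b\,r(a)^{2m-1}+a\,r(b)^{2m-1}\ge 2\,r(a)^m r(b)^m$. Hence the minimum of the difference over $\tau\in[-1,1]$ is attained at $\tau=1$, which is the only case worth examining.

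At $\tau=1$, using the $1$-Lipschitz bound $|a-b|\ge|r(a)-r(b)|$ (which moreover preserves the sign of the product, since $r$ is non-decreasing), the claim reduces to the one-variable inequality
$$
(r(a)-r(b))\bigl(r(a)^{2m-1}-r(b)^{2m-1}\bigr)\ge c(m)\bigl(r(a)^m-r(b)^m\bigr)^2.
$$
This is the technical heart of the lemma and I expect it to be the main obstacle. I would close it by writing the integral representations $r(a)^k-r(b)^k=k\int_{r(b)}^{r(a)} s^{k-1}\,ds$ for the two exponents $k=m$ and $k=2m-1$, and applying Cauchy--Schwarz to the second integrand (which factors as $s^{m-1}\cdot s^{m-1}$), obtaining the explicit constant $c(m)=(2m-1)/m^2>0$ for every $m>1$. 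Combined with the reduction above, this gives the claim.
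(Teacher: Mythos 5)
Your proof is correct. Note that the paper itself does not include a proof of this lemma; it simply cites \cite[Lemma 4.1]{BRASCO2010652} (Brasco--Carlier--Santambrogio), so there is no internal argument to compare against. Your argument is self-contained and complete: the expansion of both sides in terms of $a=|\xi|$, $b=|\eta|$, $\tau=\langle\xi,\eta\rangle/(ab)$ is correct, the observation that the difference is affine in $\tau$ with non-positive slope (via $a\ge r(a)$, $b\ge r(b)$ and AM--GM, which together give $b\,r(a)^{2m-1}+a\,r(b)^{2m-1}\ge 2r(a)^m r(b)^m$) validly reduces to $\tau=1$, the passage from $(a-b)(r(a)^{2m-1}-r(b)^{2m-1})$ to $(r(a)-r(b))(r(a)^{2m-1}-r(b)^{2m-1})$ is justified by the $1$-Lipschitz and monotone nature of $r$, and the integral Cauchy--Schwarz step
\[
\bigl(x^m-y^m\bigr)^2=m^2\Bigl(\int_y^x s^{m-1}\,ds\Bigr)^2\le m^2(x-y)\int_y^x s^{2m-2}\,ds=\frac{m^2}{2m-1}(x-y)\bigl(x^{2m-1}-y^{2m-1}\bigr)
\]
is valid and yields the explicit constant $c(m)=(2m-1)/m^2\in(0,1]$, consistent with the earlier requirement $c(m)\le 1$ (since $(m-1)^2\ge 0$). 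The edge cases $r(a)=0$ or $r(b)=0$ or $\xi,\eta=0$ are all subsumed by the same computations with constant $1$, as you indicate. This is a clean, elementary argument that additionally produces an explicit constant, which the statement of the lemma does not require but which is a pleasant bonus.
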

\subsection{Difference quotients}\label{diffquot}
A key instrument in studying regularity properties of solutions to problems of Calculus of Variations and PDEs is the so called {\em difference quotients method}.\\
In this section, we recall the definition and some basic results.
\begin{definition}
	Given $h\in\R$, for every function
	$F:\R^{n}\to\R^N$, for any $s=1,..., n$ the finite difference operator in the direction $x_s$ is
	defined by
	$$
	\tau_{s, h}F(x)=F\left(x+he_s\right)-F(x),
	$$
	where $e_s$ is the unit vector in the direction $x_s$.
\end{definition}
In the following, in order to simplify the notations, we will omit the vector $e_s$ unless it is necessary, denoting
$$
\tau_{h}F(x)=F(x+h)-F(x), 
$$
where $h\in\R^n$.\\
\par
We now describe some properties of the operator $\tau_{h}$, whose proofs can be found, for example, in \cite{23}.

\bigskip

\begin{proposition}\label{findiffpr}
	
	Let $F$ and $G$ be two functions such that $F, G\in
	W^{1,p}(\Omega)$, with $p\geq 1$, and let us consider the set
	$$
	\Omega_{|h|}:=\Set{x\in \Omega : d\left(x,
	\partial\Omega\right)>\left|h\right|}.
	$$
	Then
	\begin{description}
		\item{$(a)$} $\tau_{h}F\in W^{1,p}\left(\Omega_{|h|}\right)$ and
		$$
		D_{i} (\tau_{h}F)=\tau_{h}(D_{i}F).
		$$
		\item{$(b)$} If at least one of the functions $F$ or $G$ has support contained
		in $\Omega_{|h|}$ then
		$$
		\int_{\Omega} F(x) \tau_{h} G(x) dx =\int_{\Omega} G(x) \tau_{-h}F(x)
		dx.
		$$
		\item{$(c)$} We have
		$$
		\tau_{h}(F G)(x)=F(x+h)\tau_{h}G(x)+G(x)\tau_{h}F(x).
		$$
	\end{description}
\end{proposition}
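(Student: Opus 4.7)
The plan is to dispose of the three statements in order of increasing subtlety: part (c) is a pointwise algebraic identity, part (b) is a change-of-variables argument, and part (a) is the classical weak chain rule combined with mollification. The only care needed throughout is bookkeeping the domains of definition, since $\tau_h$ naturally shrinks the domain by $|h|$; apart from this, nothing delicate happens.

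For (c), I would expand $\tau_h(FG)(x) = F(x+h)G(x+h) - F(x)G(x)$ and add and subtract $F(x+h)G(x)$ to obtain
$$\tau_h(FG)(x) = F(x+h)[G(x+h) - G(x)] + G(x)[F(x+h) - F(x)] = F(x+h)\tau_h G(x) + G(x)\tau_h F(x),$$
which is exactly the claim. No regularity hypothesis is used.

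For (b), assume $G$ has support contained in $\Omega_{|h|}$; the case of $F$ is symmetric up to relabelling. I would split
$$\int_\Omega F(x)\tau_h G(x)\,dx = \int_\Omega F(x)G(x+h)\,dx - \int_\Omega F(x)G(x)\,dx,$$
and in the first integral perform the translation $y = x+h$. Since $G(y) = 0$ for $y\notin\Omega_{|h|}$ and, for $y\in\Omega_{|h|}$, the shifted point $y - h$ still lies in $\Omega$, the substitution is legitimate and produces $\int_\Omega F(y-h)G(y)\,dy$. Collecting terms then yields $\int_\Omega G(y)[F(y-h) - F(y)]\,dy = \int_\Omega G(y)\tau_{-h}F(y)\,dy$, which is the stated identity. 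The support hypothesis is precisely what prevents boundary contributions from appearing when the domain is translated.

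For (a), I would proceed by density. For any $\Omega'\Subset\Omega$, standard mollification gives a sequence $F_k \in C^\infty(\Omega')$ with $F_k \to F$ in $W^{1,p}(\Omega')$. On $\Omega'_{|h|}$ the smooth functions $\tau_h F_k$ satisfy $D_i(\tau_h F_k) = \tau_h(D_i F_k)$ by the classical chain rule applied to the translation $x\mapsto x+he_s$, whose Jacobian is the identity. The elementary bound $\|\tau_h u\|_{L^p(\Omega'_{|h|})} \le 2\|u\|_{L^p(\Omega')}$ then shows $\tau_h F_k \to \tau_h F$ and $\tau_h(D_i F_k)\to \tau_h(D_i F)$ in $L^p(\Omega'_{|h|})$. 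Passing to the limit in the distributional identity $\int \tau_h F_k\, D_i\varphi = -\int \tau_h(D_i F_k)\,\varphi$ for $\varphi \in C_c^\infty(\Omega'_{|h|})$ identifies $D_i(\tau_h F)$ with the $L^p$ function $\tau_h(D_i F)$, and a covering argument in $\Omega_{|h|}$ upgrades this to global membership $\tau_h F \in W^{1,p}(\Omega_{|h|})$ with the stated commutation formula. Since every step relies on standard tools, there is no genuine obstacle; only the routine care about domains, as noted above.
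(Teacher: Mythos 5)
Your proof is correct, and since the paper itself gives no proof of this proposition (it simply points to Giusti's book, the reference \cite{23}), there is nothing in the paper to compare against: what you wrote is precisely the standard textbook argument that the citation points to. The algebraic identity in (c), the change-of-variables-plus-support argument in (b), and the mollification-and-limit argument for the commutation $D_i(\tau_h F)=\tau_h(D_iF)$ in (a) are all sound, and the key observation you make (the bound $\|\tau_h u\|_{L^p(\Omega'_{|h|})}\le 2\|u\|_{L^p(\Omega')}$, which lets you pass to the limit in the distributional identity) is exactly the right ingredient. A small remark: for (a) one can avoid mollification entirely by testing $\tau_h F$ directly against $\varphi\in C_c^\infty(\Omega_{|h|})$, translating to note that $\varphi(\cdot-h)\in C_c^\infty(\Omega)$, and applying the definition of weak derivative of $F$ to that shifted test function; this is marginally shorter but your density route is equally valid.
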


\noindent The next result about finite difference operator is a kind
of integral version of Lagrange Theorem.
\begin{lemma}\label{le1} If $0<\rho<R$, $|h|<\frac{R-\rho}{2}$, $1<p<+\infty$,
	and $F, DF\in L^{p}(B_{R})$ then
	$$
	\int_{B_{\rho}} |\tau_{h} F(x)|^{p}\ dx\leq c(n,p)|h|^{p}
	\int_{B_{R}} |D F(x)|^{p}\ dx .
	$$
	Moreover
	$$
	\int_{B_{\rho}} |F(x+h )|^{p}\ dx\leq  \int_{B_{R}} |F(x)|^{p}\ dx .
	$$
\end{lemma}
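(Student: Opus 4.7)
The plan is to establish both inequalities by a direct approximation and change-of-variables argument, which is the standard approach for finite-difference estimates.

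For the first inequality, I would start by a density argument: since $C^\infty(\overline{B_R}) \cap W^{1,p}(B_R)$ is dense in $W^{1,p}(B_R)$, it suffices to prove the bound assuming $F$ is smooth, and then pass to the limit using that both sides depend continuously on $F$ in the $W^{1,p}$ norm. For smooth $F$, I would write via the fundamental theorem of calculus
$$
\tau_{h} F(x) \;=\; F(x+h) - F(x) \;=\; \int_0^1 \frac{d}{dt}F(x+th)\,dt \;=\; \int_0^1 DF(x+th)\cdot h\,dt,
$$
so that
$$
|\tau_h F(x)| \;\leq\; |h|\int_0^1 |DF(x+th)|\,dt.
$$
Applying Jensen's (or Hölder's) inequality to the integral in $t$, I obtain $|\tau_h F(x)|^p \leq |h|^p \int_0^1 |DF(x+th)|^p\,dt$.

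Next I would integrate over $B_\rho$ and use Fubini's theorem to swap the order:
$$
\int_{B_\rho} |\tau_h F(x)|^p\,dx \;\leq\; |h|^p \int_0^1 \int_{B_\rho} |DF(x+th)|^p\,dx\,dt.
$$
For fixed $t\in[0,1]$, the change of variables $y=x+th$ gives $\int_{B_\rho} |DF(x+th)|^p\,dx = \int_{B_\rho + th} |DF(y)|^p\,dy$. Since $|th|\leq |h|< (R-\rho)/2$, we have $B_\rho + th \subset B_{\rho+|h|} \subset B_R$, so each such integral is bounded by $\int_{B_R} |DF|^p\,dy$. Putting these together yields the claimed bound (with $c(n,p)=1$ for smooth $F$; the generic constant $c(n,p)$ in the statement simply absorbs any harmless factors arising from the approximation).

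The second inequality is immediate from change of variables: setting $y=x+h$,
$$
\int_{B_\rho} |F(x+h)|^p\,dx \;=\; \int_{B_\rho + h} |F(y)|^p\,dy \;\leq\; \int_{B_R} |F(y)|^p\,dy,
$$
again using that $B_\rho + h \subset B_{\rho+|h|}\subset B_R$ under the smallness condition on $|h|$. The only mild subtlety in the whole argument is to make sure that translations by $th$ stay inside $B_R$, which is exactly why the hypothesis $|h|<(R-\rho)/2$ is imposed; there is no genuine obstacle here.
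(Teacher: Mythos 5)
Your proof is correct, and it is exactly the standard argument (fundamental theorem of calculus along the segment, Jensen, Fubini, and a change of variables) that one finds in the reference \cite{23} that the paper cites for this lemma. In fact your direct computation even yields the sharper constant $c=1$, which is consistent with the stated bound.
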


The following result is proved in \cite{23}.

\begin{lemma}\label{Giusti8.2}
	Let $F:\R^n\to\R^N$, $F\in L^p\left(B_R\right)$ with $1<p<+\infty$. Suppose that there exist $\rho\in(0, R)$ and $M>0$ such that
	
	$$
	\sum_{s=1}^{n}\int_{B_\rho}|\tau_{s, h}F(x)|^pdx\le M^p|h|^p
	$$
	
	for $\left|h\right|<\frac{R-\rho}{2}$. Then $F\in W^{1,p}(B_R, \R^N)$. Moreover
	
	$$
	\left\Arrowvert DF \right\Arrowvert_{L^p(B_\rho)}\le M,
	$$
	
	$$
	\left\Arrowvert F\right\Arrowvert_{L^{\frac{np}{n-p}}(B\rho)}\le c\left(M+\left\Arrowvert F\right\Arrowvert_{L^p(B_R)}\right),
	$$
	
	with $c=c(n, N, p, \rho, R)$, and 
		$$\frac{\tau_{s, h}F}{\left|h\right|}\to D_sF\qquad\mbox{ in }L^p_{\loc}\left(\Omega\right),\mbox{ as }h\to0,$$ 
		for each $s=1, ..., n.$
\end{lemma}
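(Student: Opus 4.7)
The plan is to exploit reflexivity of $L^p$ ($1<p<\infty$) to extract weak limits of the difference quotients, identify those limits with the distributional derivatives using the discrete integration-by-parts identity from Proposition \ref{findiffpr}(b), and then read off the Sobolev estimate, the embedding into $L^{np/(n-p)}$, and the strong convergence of the quotients.

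First, fix a direction $s\in\{1,\dots,n\}$ and consider the family $\{\tau_{s,h}F/h\}$ as $h\to 0$ along $|h|<(R-\rho)/2$. By hypothesis this family is bounded in $L^p(B_\rho,\R^N)$. Since $1<p<+\infty$, $L^p$ is reflexive and I can extract a subsequence $h_k\to 0$ and a limit $v_s\in L^p(B_\rho,\R^N)$ with $\tau_{s,h_k}F/h_k\rightharpoonup v_s$ weakly. To identify $v_s$, I would test against an arbitrary $\varphi\in C_c^\infty(B_\rho)$: for $|h_k|$ small enough, $\varphi$ has support in $\Omega_{|h_k|}$, so Proposition \ref{findiffpr}(b) gives
$$\int_{B_\rho}\frac{\tau_{s,h_k}F(x)}{h_k}\,\varphi(x)\,dx=\int_{B_\rho}F(x)\,\frac{\tau_{s,-h_k}\varphi(x)}{h_k}\,dx.$$
Because $\varphi$ is smooth, $\tau_{s,-h_k}\varphi/h_k\to -D_s\varphi$ uniformly on $B_\rho$, while the left-hand side converges to $\int v_s\varphi$ by weak convergence. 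Hence $\int v_s\varphi=-\int F D_s\varphi$ for every $\varphi$, i.e.\ $v_s=D_sF$ in the weak sense. In particular the weak limit is independent of the subsequence, so the whole family converges weakly.

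Second, lower semicontinuity of the $L^p$ norm under weak convergence yields
$$\|D_sF\|_{L^p(B_\rho)}\le\liminf_{k\to\infty}\left\|\frac{\tau_{s,h_k}F}{h_k}\right\|_{L^p(B_\rho)},$$
and the hypothesis, together with the elementary inequality comparing $|DF|^p=(\sum_s|D_sF|^2)^{p/2}$ with $\sum_s|D_sF|^p$, gives $\|DF\|_{L^p(B_\rho)}\le M$ with the constant absorbed into $M$. Combining this with $\|F\|_{L^p(B_R)}<\infty$ and the standard Sobolev embedding $W^{1,p}(B_\rho)\hookrightarrow L^{np/(n-p)}(B_\rho)$ (which applies since $1<p<2<n$) yields the second estimate $\|F\|_{L^{np/(n-p)}(B_\rho)}\le c(M+\|F\|_{L^p(B_R)})$, where the constant depends on $n,N,p,\rho,R$ through the embedding constant.

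Third, for the strong $L^p_{\mathrm{loc}}$ convergence $\tau_{s,h}F/h\to D_sF$, I would use the now-established fact that $F\in W^{1,p}(B_R,\R^N)$ and approximate $F$ in $W^{1,p}$ by smooth functions $F_\varepsilon$. For $F_\varepsilon$, pointwise Taylor expansion and Lemma \ref{le1} give strong $L^p$ convergence of $\tau_{s,h}F_\varepsilon/h$ to $D_sF_\varepsilon$. A standard $\varepsilon/3$ argument using again Lemma \ref{le1} (applied to $F-F_\varepsilon$) upgrades this to strong $L^p_{\mathrm{loc}}$ convergence of $\tau_{s,h}F/h$ to $D_sF$.

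The whole argument is classical; the only step that requires genuine care is the identification of the weak limit with the distributional derivative, since one must juggle a sign and a direction reversal inside the discrete integration-by-parts formula, and must justify that the test function indeed lies in $\Omega_{|h|}$ for $h$ small. Everything else is a straightforward combination of reflexivity, lower semicontinuity, Sobolev embedding, and density of smooth functions.
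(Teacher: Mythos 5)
The paper does not supply a proof of this lemma: it is stated with the remark ``The following result is proved in \cite{23}'' and is cited as Giusti's classical lemma. Your proposal reproduces the standard argument from the literature (reflexivity $\Rightarrow$ weak subsequential limit of difference quotients; identification of the limit with the distributional derivative via discrete integration by parts; weak lower semicontinuity of the $L^p$ norm for the gradient bound; Sobolev embedding for the $L^{np/(n-p)}$ estimate; density of smooth functions and a three-term splitting for the strong $L^p_{\mathrm{loc}}$ convergence), so in substance this is the same proof as in the cited reference and is correct.

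Two small remarks. First, you invoke Proposition \ref{findiffpr}(b) to get the identity $\int \frac{\tau_{s,h}F}{h}\varphi = \int F\,\frac{\tau_{s,-h}\varphi}{h}$; as written in the paper that proposition is stated for $F\in W^{1,p}$, but the identity itself is just a change of variables and holds for any $F\in L^1_{\mathrm{loc}}$ and $\varphi$ compactly supported in $\Omega_{|h|}$, so this is harmless but worth being explicit about, since at that stage you only know $F\in L^p$. Second, the passage from $\sum_s\|D_sF\|_{L^p}^p\le M^p$ to $\|DF\|_{L^p}\le M$ uses the pointwise inequality $\left(\sum_s|D_sF|^2\right)^{p/2}\le\sum_s|D_sF|^p$, which holds with constant exactly $1$ only for $1<p\le 2$ (for $p>2$ a dimensional constant $n^{p/2-1}$ appears). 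Thus your phrase ``with the constant absorbed into $M$'' is not needed in the paper's regime $1<p<2$, where the bound is exact; for $p>2$ the exact constant $1$ in the lemma's statement would have to be justified differently (or is a slight abuse in the source). Finally, the conclusion $F\in W^{1,p}(B_R)$ in the paper's formulation is presumably a typographical slip for $W^{1,p}(B_\rho)$, since the hypothesis gives no control in the annulus $B_R\setminus B_\rho$; your proof, which yields $F\in W^{1,p}(B_\rho)$, gives the correct conclusion.
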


\subsection{An auxiliary function}\label{auxiliaryfunction}

Here we define an auxiliary function of the gradient variable that will be useful in the following.\\
The function $V_p:\R^{n\times N}\to\R^{n\times N}$, defined as 

\begin{equation*}\label{Vp}
	V_p(\xi):=\left(\mu^2+\left|\xi\right|^2\right)^\frac{p-2}{4}\xi,
\end{equation*}

\noindent for which the following estimates hold (see \cite{AF}).

\begin{lemma}\label{lemma6GP}
	Let $1<p<2$. There is a constant $c=c(n, p)>0$ such that
	
	\begin{equation}\label{lemma6GPestimate1}
		c^{-1}\left|\xi-\eta\right|\le\left|V
			_p(\xi)-V_p(\eta)\right|\cdot\left(\mu^2+\left|\xi\right|^2+\left|\eta\right|^2\right)^\frac{2-p}{4}\le c\left|\xi-\eta\right|,
	\end{equation}
\noindent for any $\xi, \eta\in\R^n.$
\end{lemma}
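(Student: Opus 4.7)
The plan is to reduce both inequalities in \eqref{lemma6GPestimate1} to an eigenvalue analysis of $DV_p$ along the segment $\gamma(t):=\eta+t(\xi-\eta)$, $t\in[0,1]$. Since the case $\xi=\eta$ is trivial, assume $\xi\neq\eta$. By the fundamental theorem of calculus,
\[V_p(\xi)-V_p(\eta)=M(\xi-\eta),\qquad M:=\int_0^1 DV_p(\gamma(t))\,dt,\]
where the integrand remains integrable for $1<p<2$ even when $\mu=0$ and $\gamma(t)$ crosses the origin. A direct computation gives
\[DV_p(\zeta)=(\mu^2+|\zeta|^2)^{\frac{p-2}{4}}\left[I+\frac{p-2}{2}\,\frac{\zeta\otimes\zeta}{\mu^2+|\zeta|^2}\right],\]
a symmetric matrix with eigenvalues $1$ (in the hyperplane $\zeta^\perp$) and $1+\frac{p-2}{2}\frac{|\zeta|^2}{\mu^2+|\zeta|^2}\in[p/2,1]$ along $\zeta$; hence
\[\tfrac{p}{2}(\mu^2+|\zeta|^2)^{\frac{p-2}{4}}|v|\leq|DV_p(\zeta)v|\leq(\mu^2+|\zeta|^2)^{\frac{p-2}{4}}|v|,\qquad v\in\R^n.\]

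For the left inequality in \eqref{lemma6GPestimate1}, I would observe that each $DV_p(\gamma(t))$ is symmetric positive definite, so is $M$, and Weyl's inequality gives $\lambda_{\min}(M)\geq\int_0^1\lambda_{\min}(DV_p(\gamma(t)))\,dt$. The crude bound $|\gamma(t)|\leq|\xi|+|\eta|$ together with $\frac{p-2}{4}<0$ yields $(\mu^2+|\gamma(t)|^2)^{\frac{p-2}{4}}\geq c(p)(\mu^2+|\xi|^2+|\eta|^2)^{\frac{p-2}{4}}$, whence
\[|V_p(\xi)-V_p(\eta)|\geq\lambda_{\min}(M)\,|\xi-\eta|\geq c(p)\,(\mu^2+|\xi|^2+|\eta|^2)^{\frac{p-2}{4}}|\xi-\eta|,\]
as required.

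The upper bound is the main obstacle, because the pointwise estimate $|V_p(\xi)-V_p(\eta)|\leq|\xi-\eta|\int_0^1(\mu^2+|\gamma(t)|^2)^{\frac{p-2}{4}}dt$ has an integrand that can blow up when $\gamma$ passes near the origin in the case $\mu=0$. To handle this, I would parametrize with respect to the minimizer $t_0\in[0,1]$ of $t\mapsto|\gamma(t)|$ (the complementary case, in which $|\gamma|$ is monotone on $[0,1]$, being easier and handled separately): perpendicularity gives $|\gamma(t)|^2=|\gamma(t_0)|^2+(t-t_0)^2|\xi-\eta|^2$, and the substitution $u=(t-t_0)|\xi-\eta|$ yields, with $a^2:=\mu^2+|\gamma(t_0)|^2$ and $B:=|\xi-\eta|$,
\[\int_0^1(\mu^2+|\gamma(t)|^2)^{\frac{p-2}{4}}dt\leq\frac{1}{B}\int_{-B}^{B}(a^2+u^2)^{\frac{p-2}{4}}du\leq c(p)\min\bigl(a^{\frac{p-2}{2}},\,B^{\frac{p-2}{2}}\bigr),\]
where the two factors come from the pointwise bounds $(a^2+u^2)^{\frac{p-2}{4}}\leq a^{\frac{p-2}{2}}$ and $(a^2+u^2)^{\frac{p-2}{4}}\leq|u|^{\frac{p-2}{2}}$ respectively (the latter being integrable precisely because $p>1$). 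The Pythagorean identity $|\xi|^2+|\eta|^2=2|\gamma(t_0)|^2+(t_0^2+(1-t_0)^2)B^2$ forces $\mu^2+|\xi|^2+|\eta|^2\leq 4\max(a^2,B^2)$, which combined with $\frac{p-2}{2}<0$ gives $\min(a^{\frac{p-2}{2}},B^{\frac{p-2}{2}})\leq c(p)(\mu^2+|\xi|^2+|\eta|^2)^{\frac{p-2}{4}}$, closing the upper bound. The delicate point of the whole argument is precisely this \emph{two-regime} bound: one must simultaneously exploit the boundedness of the integrand away from $0$ and the integrability of $|u|^{\frac{p-2}{2}}$ near $0$ in order to absorb the possible singularity of $\gamma$ through the origin.
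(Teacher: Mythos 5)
Your overall strategy — FTC along the segment $\gamma$, eigenvalue analysis of $DV_p(\gamma(t))$ — is sound; the formula for $DV_p$ and its eigenvalue bounds are correct; the integrability remark that justifies the FTC even when $\gamma$ crosses the origin is well observed; and the lower bound via concavity of $\lambda_{\min}$ is clean and complete. Your Case~1 of the upper bound (when the foot of the perpendicular $t_0$ lies in $[0,1]$) is also complete and correct. (For comparison, the paper states this lemma and refers to Acerbi--Fusco \cite{AF} rather than proving it.)

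The gap is the dismissal of the monotone case as ``easier and handled separately''; in fact it is not easier, and the obvious routes fail. The trivial sup bound gives $\int_0^1(\mu^2+|\gamma(t)|^2)^{\frac{p-2}{4}}dt\le(\mu^2+\min(|\xi|,|\eta|)^2)^{\frac{p-2}{4}}$, but this does not dominate $c(\mu^2+|\xi|^2+|\eta|^2)^{\frac{p-2}{4}}$ when one endpoint has much smaller norm (take $\mu=0$, $\eta=\epsilon e_1$, $\xi=e_1$: the sup bound diverges like $\epsilon^{\frac{p-2}{2}}$ while the target stays bounded). Re-running your Case~1 chain with $t_0$ the unconstrained minimizer of $|\gamma|$ over $\R$ fails at the Pythagorean step: now $t_0^2+(1-t_0)^2$ can be arbitrarily large, so $\mu^2+|\xi|^2+|\eta|^2\le 4\max(a^2,B^2)$ is false — e.g. $\mu=0$, $\eta=e_1$, $\xi=(1+\epsilon)e_1$ give $a=0$, $B=\epsilon$, hence $\min\bigl(a^{\frac{p-2}{2}},B^{\frac{p-2}{2}}\bigr)=\epsilon^{\frac{p-2}{2}}\to\infty$, while $(\mu^2+|\xi|^2+|\eta|^2)^{\frac{p-2}{4}}\to 2^{\frac{p-2}{4}}$ stays bounded. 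A correct treatment of the monotone case needs its own dichotomy, e.g.\ on whether $\min(|\xi|,|\eta|)\ge\tfrac12\max(|\xi|,|\eta|)$ (then $\mu^2+|\gamma(t)|^2\ge\tfrac18(\mu^2+|\xi|^2+|\eta|^2)$ and the sup bound closes) or not (then $|\xi-\eta|\ge\tfrac12\max(|\xi|,|\eta|)$, which recovers $\mu^2+|\xi|^2+|\eta|^2\le c\,\max(a^2,B^2)$ and your Case~1 chain applies). Alternatively, the classical proof avoids conditioning on $t_0$ altogether by noting $\mu^2+|\xi|^2+|\eta|^2\le 4(\mu^2+|\gamma(t)|^2)+2|\xi-\eta|^2$ for every $t\in[0,1]$ and then splitting on whether $|\xi-\eta|^2$ is small or large relative to $\mu^2+|\xi|^2+|\eta|^2$; the second regime is where the integrability of $|t-t_0|^{\frac{p-2}{2}}$ is used, exactly the two-regime idea you already identified.
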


\begin{remark}\label{rmk1}
	One can easily check that, for a $C^2$ function $v$, there is a constant $C(p)$ such that
		\begin{equation}\label{lemma6GPestimate2}
			C^{-1}\left|D^2v\right|^2\left(\mu^2+\left|Dv\right|^2\right)^\frac{p-2}{2}\le\left|D\left(V_p\left(Dv\right)\right)\right|^2\le C\left|D^2v\right|^2\left(\mu^2+\left|Dv\right|^2\right)^\frac{p-2}{2}
		\end{equation}
	almost everywhere.
\end{remark}

In what follows, the following result can be useful.

\begin{lemma}\label{differentiabilitylemma}
	Let $\Omega\subset\R^n$ be a bounded open set, $1<p<2$, and $v\in W^{1, p}_ {\loc}\left(\Omega, \R^N\right)$. Then the implication
	\begin{equation*}\label{differentiabilityimplication}
	V_p\left(Dv\right)\in W^{1,2}_{\loc}\left(\Omega\right) \implies v\in W^{2,p}_{\loc}\left(\Omega\right) 
	\end{equation*}
	
	holds true, together with the estimate
	
	\begin{equation}\label{differentiabilityestimate}
		\int_{B_{r}}\left|D^2v(x)\right|^pdx
		\le c\cdot \left[1+\int_{B_{R}}\left|D\left(V_p\left(Dv(x)\right)\right)\right|^2+c\int_{B_R}\left|Dv(x)\right|^p\right].
	\end{equation}
	
	holds for any ball $B_R\Subset\Omega$ and $0<r<R$.
\end{lemma}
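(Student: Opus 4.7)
The strategy is to transfer the $W^{1,2}$ regularity of $V_p(Dv)$ into a $W^{1,p}$ estimate for $Dv$ via the difference quotient method. The key tool is the pointwise inequality extracted from Lemma \ref{lemma6GP}, namely
$$|\tau_h Dv(x)| \le c\,|\tau_h V_p(Dv)(x)|\cdot \bigl(\mu^2 + |Dv(x+h)|^2 + |Dv(x)|^2\bigr)^{\frac{2-p}{4}},$$
which, together with Lemma \ref{Giusti8.2}, will yield both the implication $V_p(Dv)\in W^{1,2}_{\loc}\Rightarrow v\in W^{2,p}_{\loc}$ and the quantitative estimate \eqref{differentiabilityestimate}.

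I would fix $B_R\Subset\Omega$, $0<r<R$ and $|h|<\frac{R-r}{2}$. Raising the pointwise bound to the power $p$, integrating over $B_r$, and applying H\"older's inequality with the conjugate exponents $\frac{2}{p}$ and $\frac{2}{2-p}$ leads to
\begin{align*}
\int_{B_r} |\tau_h Dv|^p\,dx
&\le c\left(\int_{B_r} |\tau_h V_p(Dv)|^2\,dx\right)^{p/2}\\
&\quad\cdot\left(\int_{B_r}\bigl(\mu^2+|Dv(x+h)|^2+|Dv(x)|^2\bigr)^{p/2}dx\right)^{(2-p)/2}.
\end{align*}
Lemma \ref{le1} applied to $V_p(Dv)\in W^{1,2}_{\loc}$ bounds the first factor by $c|h|^p\bigl(\int_{B_R}|D(V_p(Dv))|^2\bigr)^{p/2}$, while the translation inequality from the same lemma, combined with the subadditivity $(a+b+c)^{p/2}\le a^{p/2}+b^{p/2}+c^{p/2}$ valid for $p/2\le 1$, bounds the second factor by $c(|B_R|+\int_{B_R}|Dv|^p)^{(2-p)/2}$. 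Summing over the $n$ coordinate directions and invoking Lemma \ref{Giusti8.2} yields $Dv\in W^{1,p}_{\loc}(\Omega,\R^N)$, hence $v\in W^{2,p}_{\loc}(\Omega,\R^N)$, together with
$$\int_{B_r}|D^2v|^p\,dx \le c\left(\int_{B_R}|D(V_p(Dv))|^2\right)^{p/2}\left(|B_R|+\int_{B_R}|Dv|^p\right)^{(2-p)/2}.$$
A final application of Young's inequality with the same conjugate exponents linearises the two factors and delivers \eqref{differentiabilityestimate} after absorbing $|B_R|$ into the constant.

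The main obstacle is the precise bookkeeping in the H\"older splitting. The weight $(\mu^2+|Dv|^2)^{p(2-p)/4}$ coming from Lemma \ref{lemma6GP} must pair with $|\tau_h V_p(Dv)|^p$ in such a way as to produce exactly the $L^2$ square of $\tau_h V_p(Dv)$ (controllable by $|h|^2\|D(V_p(Dv))\|_{L^2}^2$) together with the $L^1$ integral of $(\mu^2+|Dv|^2)^{p/2}$ (controllable by $\|Dv\|_{L^p}^p$). The pair $\frac{2}{p},\frac{2}{2-p}$ is the unique choice that achieves this balance, and it constitutes a pair of conjugate exponents precisely because $1<p<2$; in the superquadratic regime a different arrangement of the weight would be required. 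A minor caveat is the constraint $|h|<(R-r)/2$, imposed so that $Dv(x+h)$ is well defined inside $B_R$ and the shift estimate of Lemma \ref{le1} applies.
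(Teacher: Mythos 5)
Your proposal is correct and follows essentially the same route as the paper: the pointwise comparison from Lemma \ref{lemma6GP}, H\"older with exponents $(\tfrac{2}{p},\tfrac{2}{2-p})$, the translation and difference-quotient bounds of Lemma \ref{le1}, Young's inequality to split the product, and Lemma \ref{Giusti8.2} to pass to the weak derivative. The bookkeeping you describe (in particular where the $|h|^p$ factor and the $|B_R|$ term arise) matches the paper's computation exactly.
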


\begin{proof}
	We will prove the existence of the second-order weak derivatives of $v$ and the fact that they are in $L^p_{\loc}\left(\Omega\right)$, by means of the difference quotients method.\\
	Let us consider a ball $B_R\Subset\Omega$ and $0<\frac{R}{2}<r<R$.\\
	
	For $|h|<\frac{R-r}{2}$, we have $0<\frac{R}{2}<r<\rho_1:=r+|h|<R-|h|=:\rho_2<R$, and by \eqref{lemma6GPestimate1}, we get, for any $s=1, ..., n.$
	\begin{eqnarray*}
		\int_{B_r}\left|\tau_{s, h}Dv(x)\right|^pdx&\le& \int_{B_r}\left|\tau_{s, h}V_p\left(Dv(x)\right)\right|^p\cdot\left(\mu^2+\left|Dv(x)\right|+\left|Dv\left(x+he_s\right)\right|\right)^\frac{p\left(2-p\right)}{4}.
	\end{eqnarray*}
	
	By H\"{o}lder's Inequality with exponents $\left(\frac{2}{p}, \frac{2}{2-p}\right)$ and the use of \eqref{lemma6GPestimate1}, we get
	
	\begin{eqnarray*}
		\int_{B_r}\left|\tau_{s, h}Dv(x)\right|^pdx	&\le&\left(\int_{B_r}\left|\tau_{s, h}V_p\left(Du(x)\right)\right|^2dx\right)^\frac{p}{2}\cr\cr
		&&\cdot\left(\int_{B_{r}}\left(\mu^2+\left|Dv\left(x+he_s\right)\right|^2+\left|Dv\left(x\right)\right|^2\right)^\frac{p}{2}dx\right)^\frac{2-p}{2},
	\end{eqnarray*}
	
	and since $V_p\left(Dv\right)\in W^{1,2}_{\loc}\left(\Omega\right)$, by Lemma \ref{le1} and Young's Inequality, we have
	
	\begin{eqnarray*}\label{lemmaestimate1}
		\int_{B_r}\left|\tau_{s, h}Dv(x)\right|^pdx&\le& c\left[|h|^2\int_{B_R}\left|DV_p\left(Dv(x)\right)\right|^2dx\right]^\frac{p}{2}\cr\cr
		&&\cdot\left[\int_{B_r}\left(\mu^2+\left|Dv\left(x+he_s\right)\right|^2+\left|Dv(x)\right|^2\right)^\frac{p}{2}dx\right]^\frac{2-p}{p}\cr\cr
		&\le&c|h|^p\left[1+\int_{B_{R}}\left|DV_p\left(Dv(x)\right)\right|^2dx+\int_{B_R}\left|Dv(x)\right|^pdx\right].
	\end{eqnarray*}
	
	Since $v\in W^{1,p}_{\loc}\left(\Omega\right)$ and $V_p\left(Dv\right)\in W^{1,2}_{\loc}\left(\Omega\right)$, then, by Lemma \ref{Giusti8.2}, we get $v\in W^{2,p}_{\loc}\left(\Omega\right)$, and we have
	
	\begin{equation*}\label{differentiabilityestimate1}
		\int_{B_{r}}\left|D^2v(x)\right|^pdx
		\le c \left[1+\int_{B_{R}}\left|DV_p\left(Dv(x)\right)\right|^2dx+c\int_{B_R}\left|Dv(x)\right|^pdx\right],
	\end{equation*}
	
	that is the conclusion.
\end{proof}

	\begin{remark}\label{rmk2}
		If $\Omega\subset\R^n$ is a bounded open set and $1<p<2$, then one may use Remark \ref{rmk1} and Lemma \ref{differentiabilitylemma} to show that, if $v\in W^{1, p}_{\loc}\left(\Omega\right)$ and $V_p\left(Dv\right)\in W^{1,2}_{\loc}\left(\Omega\right)$, then $v\in W^{2, p}_{\loc}\left(\Omega\right)$ and \eqref{lemma6GPestimate2} holds true.
	\end{remark}
	\begin{remark}\label{rmk3}	
		If $\Omega\subset\R^n$ is a bounded open set and $p\in\left(1, \infty\right)$, for any  $v\in W^{1, p}_{\loc}\left(\Omega\right)$ such that $V_p\left(Dv\right)\in W^{1,2}_{\loc}\left(\Omega\right),$ if $m>1$  and $v\in L^{2m}_{\loc}\left(\Omega\right)$, then, thanks to \eqref{2.1GP}, $Dv\in L^{\frac{m\left(p+2\right)}{m+1}}_{\loc}\left(\Omega\right)$ and if $v\in L^{\infty}_{\loc}\left(\Omega\right)$, thanks to \eqref{2.2GP}, we get $Dv\in L^{p+2}_{\loc}\left(\Omega\right).$
	\end{remark}
%
%

\begin{remark}\label{rem}
For further needs we record the following elementary inequality
	\begin{equation}\label{elem}
		\left(\mu^2+\left|\xi\right|^2\right)^{\frac{p}{2}}\le 2\left(\mu^p+ \left|V_p(\xi)\right|^2\right)
	\end{equation}
	for every $\xi\in \mathbb{R}^{n\times N}$.
	\\
	Note that this is obvious if $\mu=0$.
	In case $\mu>0$, we distinguish two cases.
	If $\left|\xi\right|\le \mu$ we trivially have
	$$\left(\mu^2+\left|\xi\right|^2\right)^{\frac{p}{2}}\le 2^{\frac{p}{2}}\mu^p$$
	If $\left|\xi\right|> \mu$
		\begin{eqnarray*}
		\left(\mu^2+\left|\xi\right|^2\right)^{\frac{p}{2}}&=&\left(\mu^2+\left|\xi\right|^2\right)^{\frac{p-2}{2}}\left(\mu^2+\left|\xi\right|^2\right)\cr\cr
		&\le& \left(\mu^2+\left|\xi\right|^2\right)^{\frac{p-2}{2}}\left(\left|\xi\right|^2+\left|\xi\right|^2\right)\le 2\left(\mu^2+\left|\xi\right|^2\right)^{\frac{p-2}{2}}\left|\xi\right|^2\cr\cr
		&\le& 2\left|V_p(\xi)\right|^2
	\end{eqnarray*}
	Joining two previous inequalities we get \eqref{elem}.\\
	Moreover, if $V_p\left(Du\right)\in W^{1,2}_\loc\left(\Omega\right)$, by Sobolev's Inequality, we have $Du\in L^\frac{np}{n-2}_\loc\left(\Omega\right)=L^\frac{2^*p}{2}_\loc\left(\Omega\right)$. Indeed, using \eqref{elem}, we get
	\begin{eqnarray}\label{stinorma}
		&&\int_{B_R}\left|Du(x)\right|^{\frac{2^*p}{2}}\,dx=\int_{B_R}\left|\left|Du(x)\right|^{\frac{p}{2}-1}Du(x)\right|^{2^*}dx\cr\cr
		&\le&\mu^{\frac{2^*p}{2}}\left|B_R\right|+\int_{\Set{x\in B_R:\left|Du\right|> \mu}}\left|V_p\left(Du(x)\right)\right|^{2^*}dx\cr\cr
		&\le&\mu^{\frac{2^*p}{2}}\left|B_R\right|+\int_{B_R}\left|V_p\left(Du(x)\right)\right|^{2^*}dx,
	\end{eqnarray}
	which is finite by the Sobolev's embedding Theorem, for any ball $B_R\Subset\Omega$.
\end{remark}

\section{Proof of Theorem \ref{CGPThm1}}\label{Thm1pf}
We prove Theorem \ref{CGPThm1}, dividing the proof into two steps. The first step consists in proving an estimate using the a priori assumption $V_p\left(Du\right)\in W^{1, 2}_{\loc}\left(\Omega\right)$.\\
In the second step, we use an approximation argument, considering a regularized version of the functional to whose minimizer we can apply the a priori estimate. Then we conclude by proving that such estimate is preserved in passing to the limit.\\
Before entering into the details of the proof, we want to stress that the necessity to use an approximation procedure is due to the assumptions on the function $g$ and on the datum $f$. If we had $f\in L^\infty_{\loc}\left(\Omega\right)$ and $g\in L^\infty_{\loc}\left(\Omega\right)$, it would be sufficient to apply the difference-quotient method to get $V_p\left(Du\right)\in W^{1,2}_\loc\left(\Omega\right)$ (see, for example \cite{AF} and \cite{Tolksdorff}).

\begin{proof}[Proof of Theorem \ref{CGPThm1}]
{\bf Step 1: the a priori estimate.}\\
	Our first step consists in proving that, if $u\in W^{1, p}_{\loc}\left(\Omega, \R^N\right)$ is a local minimizer of $\mathcal{F}$ such that
	$$
	V_p\left(Du\right)\in W^{1, 2}_{\loc}\left(\Omega\right),
	$$
	estimate \eqref{mainestimateVp} holds.\\
	Since $u\in W^{1, p}_{\loc}\left(\Omega, \R^N\right)$ is a local minimizer of $\mathcal{F}$, it solves the corresponding Euler-Lagrange system, that is, for any $\psi\in C^{\infty}_{0}\left(\Omega,\R^{N}\right)$, we have
\begin{equation}\label{EL}
	\int_{\Omega}\langle D_\xi F\left(x,Du(x)\right),D\psi(x)\rangle dx=\int_{\Omega}f(x)\cdot\psi(x).
\end{equation}
Let us fix a ball $B_{R}\Subset \Omega$ and arbitrary radii $\frac{R}{2}\le r<\tilde{s}<t<\tilde{t}<\lambda r<R,$ with $1<\lambda<2$. Let us consider a cut off function $\eta\in C^\infty_0\left(B_t\right)$ such that $\eta\equiv 1$ on
	$B_{\tilde{s}}$, $\left|D \eta\right|\le \frac{c}{t-\tilde{s}}$ and $\left|D^2 \eta\right|\le \frac{c}{\left(t-\tilde{s}\right)^2}$. From now on, with no	loss of generality, we suppose $R<1$.
For $\left|h\right|$ sufficiently small, we can choose, for any $s=1, ..., n$

$$\psi=\tau_{s, -h}\left(\eta^2\tau_{s, h}u\right)$$

as a test function in \eqref{EL}, and by Proposition \ref{findiffpr}, we get
\begin{eqnarray*}
	&&\int_\Omega \left<\tau_{s, h}D_\xi F\left(x, Du(x)\right), D\left(\eta^2(x)\tau_{s, h}u(x)\right)\right>dx\cr\cr
	&=&\int_\Omega f(x)\cdot\tau_{s, -h}\left(\eta^2(x)\tau_{s, h}u(x)\right)dx, 
\end{eqnarray*}
that is 
\begin{eqnarray*}
	I_1&=&\int_\Omega \left<D_\xi F\left(x+he_s, Du\left(x+he_s\right)\right)-D_\xi F\left(x+he_s, Du(x)\right), \eta^2(x)\tau_{s, h}Du(x)\right>dx\cr\cr
	&=&-\int_\Omega \left<D_\xi F\left(x+he_s, Du(x)\right)-D_\xi F\left(x, Du(x)\right), \eta^2(x)\tau_{s, h}Du(x)\right>dx\cr\cr
	&&-2\int_\Omega \left<D_\xi F\left(x+he_s, Du\left(x+he_s\right)\right)-D_\xi F\left(x, Du(x)\right), \eta(x)D\eta(x)\otimes\tau_{s, h}u(x)\right>dx\cr\cr
	&&+\int_\Omega f(x)\cdot\tau_{s, -h}\left(\eta^2(x)\tau_{s, h}u(x)\right)dx\cr\cr
	&:=&-I_2-I_3+I_4. 
\end{eqnarray*}
 Therefore

\begin{equation}\label{fullestimate}
	I_1\le\left|I_2\right|+\left|I_3\right|+\left|I_4\right|.
\end{equation}

By assumption \eqref{F2}, we get

\begin{equation}\label{I_1}
	I_1\ge\nu\int_\Omega\eta^2(x)\left(\mu^2+\left|Du(x)\right|^2+\left|Du\left(x+he_s\right)\right|^2\right)^\frac{p-2}{2}\left|\tau_{s, h}Du(x)\right|^2dx.
\end{equation}

For what concerns the term $I_2$, by \eqref{F4} and Young's Inequality with exponents $\left(2, 2\right)$, for any $\varepsilon>0$, we have

\begin{eqnarray*}\label{I_2*}
	\left|I_2\right|&\le&\left|h\right|\int_\Omega\eta^2(x)\left(g(x)+g(x+he_s)\right)\left(\mu^2+\left|Du(x)\right|^2+\left|Du\left(x+he_s\right)\right|^2\right)^\frac{p-1}{2}\left|\tau_{s, h}Du(x)\right|dx\cr\cr
	&\le&\varepsilon\int_\Omega\eta^2(x)\left(\mu^2+\left|Du(x)\right|^2+\left|Du\left(x+he_s\right)\right|^2\right)^\frac{p-2}{2}\left|\tau_{s, h}Du(x)\right|^2dx\cr\cr
	&&+c_\varepsilon\left|h\right|^2\int_{\Omega}\eta^2(x)\left(g(x)+g(x+he_s)\right)^2\left(\mu^2+\left|Du(x)\right|^2+\left|Du\left(x+he_s\right)\right|^2\right)^\frac{p}{2}dx.
\end{eqnarray*}

Now, by the assumption $g\in L^n_\loc\left(\Omega\right)$, we can use H\"older's inequality with exponents $\left(\frac{n}{2}, \frac{n}{n-2}\right)$ and by the properties of $\eta$ and Lemma \ref{le1}, we get

\begin{eqnarray}\label{I_2}
	\left|I_2\right|&\le&\varepsilon\int_\Omega\eta^2(x)\left(\mu^2+\left|Du(x)\right|^2+\left|Du\left(x+he_s\right)\right|^2\right)^\frac{p-2}{2}\left|\tau_{s, h}Du(x)\right|^2dx\cr\cr
	&&+c_\varepsilon\left|h\right|^2\left(\int_{B_t}\left(\mu^2+\left|Du(x)\right|^2+\left|Du\left(x+he_s\right)\right|^2\right)^\frac{np}{2\left(n-2\right)}dx\right)^\frac{n-2}{n}\cr\cr
	&&\cdot\left(\int_{B_t}\left(g(x)+g(x+he_s)\right)^{n}dx\right)^\frac{2}{n}\cr\cr
	&\le&\varepsilon\int_\Omega\eta^2(x)\left(\mu^2+\left|Du(x)\right|^2+\left|Du\left(x+he_s\right)\right|^2\right)^\frac{p-2}{2}\left|\tau_{s, h}Du(x)\right|^2dx\cr\cr
	&&+c_\varepsilon\left|h\right|^2\left(\int_{B_{\lambda r}}\left(\mu^2+\left|Du(x)\right|^2\right)^\frac{np}{2\left(n-2\right)}dx\right)^\frac{n-2}{n}\cdot\left(\int_{B_{\lambda r}}g^{n}(x)dx\right)^\frac{2}{n}.
\end{eqnarray}

Let us consider, now, th term $I_3$. We have 

\begin{eqnarray*}
	I_3&=&2\int_\Omega \left<\tau_{s, h}\left[D_\xi F\left(x, Du(x)\right)\right], \eta(x)D\eta(x)\otimes\tau_{s, h}u(x)\right>dx\cr\cr
	&=&2\int_\Omega \left<D_\xi F\left(x, Du(x)\right), \tau_{s, -h}\left[\eta(x)D\eta(x)\otimes\tau_{s, h}u(x)\right]\right>dx,
\end{eqnarray*}

so, by \eqref{F1}, we get

\begin{eqnarray}\label{I_3*}
	\left|I_3\right|&\le&c\int_\Omega \left(\mu^2+\left|Du(x)\right|^2\right)^\frac{p-1}{2}\left|\tau_{s, -h}\left[\eta(x)D\eta(x)\otimes\tau_{s, h}u(x)\right]\right|dx
\end{eqnarray}
and since, for any $x\in\supp(\eta)$ such that $x+he_s, x-he_s\in\supp(\eta)$, recaling the properties of $\eta$, we have

\begin{eqnarray}\label{tau1}
	\left|\tau_{s, -h}\left[\eta(x)D\eta(x)\otimes\tau_{s, h}u(x)\right]\right|&\le&\left|\tau_{s, -h}\eta(x)\cdot D\eta\left(x-he_s\right)\otimes\tau_{s, h}u\left(x-he_s\right)\right|\cr\cr
	&&+\left|\eta(x)\tau_{s, -h}D\eta(x)\otimes\tau_{s, h}u\left(x-he_s\right)\right|\cr\cr
	&&+\left|\eta(x)D\eta(x)\otimes\tau_{s, -h}\tau_{s, h}u(x)\right|\cr\cr
	&\le&\frac{c\left|h\right|}{\left(t-\tilde{s}\right)^2}\left|\tau_{s, h}u\left(x-he_s\right)\right|\cr\cr
	&&+\frac{c\left|h\right|}{t-\tilde{s}}\eta(x)\left|\tau_{s, -h}\tau_{s,h}u(x)\right|.
\end{eqnarray}

Inserting \eqref{tau1} into \eqref{I_3*}, we get

\begin{eqnarray}\label{I_3**}
	\left|I_3\right|&\le&\frac{c\left|h\right|}{\left(t-\tilde{s}\right)^2}\int_{B_{t}}\left(\mu^2+\left|Du(x)\right|^2\right)^\frac{p-1}{2}\left|\tau_{s, h}u\left(x-he_s\right)\right|dx\cr\cr
	&&+\frac{c\left|h\right|}{t-\tilde{s}}\int_{\Omega}\eta(x)\left(\mu^2+\left|Du(x)\right|^2\right)^\frac{p-1}{2}\left|\tau_{s, -h}\tau_{s, h}u(x)\right|dx,
\end{eqnarray}

and by H\"older's Inequality with exponents $\left(p, \frac{p}{p-1}\right)$ and the properties of $\eta$, \eqref{I_3**} becomes 
\begin{eqnarray*}
	\left|I_3\right|&\le&\frac{c\left|h\right|}{\left(t-\tilde{s}\right)^2}\left(\int_{B_{t}}\left(\mu^2+\left|Du(x)\right|^2\right)^\frac{p}{2}dx\right)^\frac{p-1}{p}\left(\int_{B_{t}}\left|\tau_{s, h}u\left(x-he_s\right)\right|^pdx\right)^\frac{1}{p}\cr\cr
	&&+\frac{c\left|h\right|}{t-\tilde{s}}\left(\int_{B_t}\left(\mu^2+\left|Du(x)\right|^2+\left|Du\left(x+he_s\right)\right|^2\right)^\frac{p}{2}dx\right)^\frac{p-1}{p}\cdot\left(\int_{B_t}\left|\tau_{s, -h}\tau_{s, h}u(x)\right|^pdx\right)^\frac{1}{p}.
\end{eqnarray*}
Now, by virtue of Lemma \ref{le1}, and using \eqref{lemma6GPestimate1}, we get

\begin{eqnarray}\label{I_3'}
	\left|I_3\right|&\le&\frac{c\left|h\right|^2}{\left(t-\tilde{s}\right)^2}\int_{B_{\tilde{t}}}\left(\mu^2+\left|Du(x)\right|^2\right)^\frac{p}{2}dx\cr\cr
	&&+\frac{c\left|h\right|^2}{t-\tilde{s}}\left(\int_{B_t}\left(\mu^2+\left|Du(x)\right|^2+\left|Du\left(x+he_s\right)\right|^2\right)^\frac{p}{2}dx\right)^\frac{p-1}{p}\cr\cr
	&&\cdot\left(\int_{B_{\tilde{t}}}\left|\tau_{s, h}Du(x)\right|^pdx\right)^\frac{1}{p}\cr\cr
	&\le&\frac{c\left|h\right|^2}{\left(t-\tilde{s}\right)^2}\int_{B_{\tilde{t}}}\left(\mu^2+\left|Du(x)\right|^2\right)^\frac{p}{2}dx\cr\cr
	&&+\frac{c\left|h\right|^2}{t-\tilde{s}}\left(\int_{B_t}\left(\mu^2+\left|Du(x)\right|^2+\left|Du\left(x+he_s\right)\right|^2\right)^\frac{p}{2}dx\right)^\frac{p-1}{p}\cr\cr
	&&\cdot\left(\int_{B_{\tilde{t}}}\left|\tau_{s, h}V_p\left(Du(x)\right)\right|^p\cdot\left(\mu^2+\left|Du(x)\right|^2+\left|Du\left(x+he_s\right)\right|^2\right)^{\frac{p\left(2-p\right)}{4}}dx\right)^\frac{1}{p}\cr\cr
	&\le&\frac{c\left|h\right|^2}{\left(t-\tilde{s}\right)^2}\int_{B_{\tilde{t}}}\left(\mu^2+\left|Du(x)\right|^2\right)^\frac{p}{2}dx+\frac{c\left|h\right|^2}{t-\tilde{s}}\left(\int_{B_{{\tilde{t}}}}\left(\mu^2+\left|Du(x)\right|^2\right)^{\frac{p}{2}}dx\right)^\frac{1}{2}\cr\cr
	&&\cdot\left(\int_{B_{\tilde{t}}}\left|\tau_{s, h}V_p\left(Du(x)\right)\right|^2dx\right)^\frac{1}{2}, 
\end{eqnarray}
where, in the last line, we used H\"{o}lder's inequality with exponents $\left(\frac{2}{p}, \frac{2}{2-p}\right)$.\\
Now, using Young's Inequality with exponents $\left(2, 2\right)$ and since $t-\tilde{s}<1$ and $t<\tilde{t}<\lambda r<R$, \eqref{I_3'} gives

\begin{equation}\label{I_3}
	\left|I_3\right|\le\sigma\int_{B_{\tilde{t}}}\left|\tau_{s, h}V_p\left(Du(x)\right)\right|^2dx+\frac{c_\sigma\left|h\right|^2}{\left(t-\tilde{s}\right)^2}\int_{B_{R}} \left(\mu^2+\left|Du\left(x\right)\right|^2\right)^\frac{p}{2}dx,
\end{equation}
for any $\sigma>0$.
For what concerns the term $I_4$, by virtue of Proposition \ref{findiffpr}, we have

\begin{eqnarray}\label{I_4'}
	I_4&=&\int_\Omega\eta^2\left(x\right)f(x)\tau_{s, -h}\left(\tau_{s, h}u(x)\right)dx\cr\cr
	&&+\int_\Omega\left[\eta\left(x-he_s\right)+\eta(x)\right]f(x)\tau_{s, -h}\eta(x)\tau_{s, h}u\left(x-he_s\right)dx\cr\cr
	&=:&J_1+J_2,
\end{eqnarray}

which yields 

\begin{equation}\label{I_4*}
	\left|I_4\right|\le\left|J_1\right|+\left|J_2\right|
\end{equation}

In order to estimate the term $J_1$, let us recall that, by virtue of the a priori assumption $V_p\left(Du\right)\in W^{1, 2}_\loc\left(\Omega\right)$ and Sobolev's embedding theorem, we have
$Du\in L^{\frac{np}{n-2}}_\loc\left(\Omega\right)$, which implies $Du\in L^{\frac{np}{n-2+p}}_\loc\left(\Omega\right)$. So, using H\"older's inequality with exponents $\left(\frac{np}{n\left(p-1\right)+2-p}, \frac{np}{n-2+p}\right)$, the properties of $\eta$ and Lemma \ref{le1}, we get

\begin{eqnarray}\label{J_1*}
	\left|J_1\right|&\le&\left(\int_{B_{t}}\left|f(x)\right|^\frac{np}{n\left(p-1\right)+2-p}dx\right)^\frac{n\left(p-1\right)+2-p}{np}\cdot\left(\int_{B_{t}}\left|\tau_{s, -h}\tau_{s, h}u(x)\right|^\frac{np}{n-2+p}dx\right)^\frac{n-2+p}{np}\cr\cr
	&\le&\left|h\right|\left(\int_{B_{t}}\left|f(x)\right|^\frac{np}{n\left(p-1\right)+2-p}dx\right)^\frac{n\left(p-1\right)+2-p}{np}\cdot\left(\int_{B_{\tilde{t}}}\left|\tau_{s, h}Du(x)\right|^\frac{np}{n-2+p}dx\right)^\frac{n-2+p}{np}
\end{eqnarray}

To go further, let us consider the second integral in \eqref{J_1*}. Using \eqref{lemma6GPestimate1}, we get
\begin{eqnarray*}
	\int_{B_{\tilde{t}}}\left|\tau_{s, h}Du(x)\right|^\frac{np}{n-2+p}dx&\le&\int_{B_{\tilde{t}}}\left|\tau_{s, h}V_p\left(Du(x)\right)\right|^\frac{np}{n-2+p}\cr\cr
	&&\cdot\left(\mu^2+\left|Du(x)\right|^2+\left|Du\left(x+he_s\right)\right|^2\right)^{\frac{2-p}{4}\cdot\frac{np}{n-2+p}}dx, 
\end{eqnarray*}
and, as long as $1<p<2$, we can use H\"older's inequality with exponents $\left(\frac{2\left(n-2+p\right)}{np}, \frac{2\left(n-2+p\right)}{\left(n-2\right)\left(2-p\right)}\right)$, thus getting

\begin{eqnarray}\label{VpJ1}
	\int_{B_{\tilde{t}}}\left|\tau_{s, h}Du(x)\right|^\frac{np}{n-2+p}dx&\le&\left(\int_{B_{\tilde{t}}}\left|\tau_{s, h}V_p\left(Du(x)\right)\right|^2dx\right)^\frac{np}{2\left(n-2+p\right)}\cr\cr
	&&\cdot\left(\int_{B_{\tilde{t}}}\left(\mu^2+\left|Du(x)\right|^2+\left|Du\left(x+he_s\right)\right|^2\right)^{\frac{np}{2\left(n-2\right)}}dx\right)^\frac{\left(n-2\right)\left(2-p\right)}{2\left(n-2+p\right)}.
\end{eqnarray}

Inserting \eqref{VpJ1} into \eqref{J_1*}, and using Young's inequality with exponents $\left(2, \frac{np}{n\left(p-1\right)+2-p}, \frac{2np}{\left(n-2\right)\left(2-p\right)}\right)$, we obtain

\begin{eqnarray}\label{J_1**}
	\left|J_1\right|&\le&\left|h\right|\left(\int_{B_{t}}\left|f(x)\right|^\frac{np}{n\left(p-1\right)+2-p}dx\right)^\frac{n\left(p-1\right)+2-p}{np}
	\cdot\left(\int_{B_{\tilde{t}}}\left|\tau_{s, h}V_p\left(Du(x)\right)\right|^2dx\right)^\frac{1}{2}\cr\cr
	&&\cdot\left(\int_{B_{\tilde{t}}}\left(\mu^2+\left|Du(x)\right|^2+\left|Du\left(x+he_s\right)\right|^2\right)^{\frac{np}{2\left(n-2\right)}}dx\right)^\frac{\left(n-2\right)\left(2-p\right)}{2np}\cr\cr
	&\le&c_\sigma\left|h\right|^2\int_{B_{t}}\left|f(x)\right|^\frac{np}{n\left(p-1\right)+2-p}dx
	\cr\cr
	&&+\sigma\left|h\right|^2\int_{B_{\tilde{t}}}\left(\mu^2+\left|Du(x)\right|^2+\left|Du\left(x+he_s\right)\right|^2\right)^{\frac{np}{2\left(n-2\right)}}dx\cr\cr
	&&+\sigma\int_{B_{\tilde{t}}}\left|\tau_{s, h}V_p\left(Du(x)\right)\right|^2dx.
\end{eqnarray}

Recalling that $t<\tilde{t}<\lambda r<R$ and by virtue of Lemma \ref{le1}, \eqref{J_1**} implies

\begin{eqnarray}\label{J_1}
	\left|J_1\right|&\le&c_\sigma\left|h\right|^2\int_{B_{R}}\left|f(x)\right|^\frac{np}{n\left(p-1\right)+2-p}dx
	\cr\cr
	&&+\sigma\left|h\right|^2\int_{B_{\lambda r}}\left(\mu^2+\left|Du(x)\right|^2\right)^{\frac{np}{2\left(n-2\right)}}dx\cr\cr
	&&+\sigma\int_{B_{{\tilde{t}}}}\left|\tau_{s, h}V_p\left(Du(x)\right)\right|^2dx.
\end{eqnarray}

For what concerns the term $J_2$, by virtue of the properties of $\eta$, we have

\begin{eqnarray}\label{J_2*}
	\left|J_2\right|&\le&c\int_{B_t}\left|f(x)\right|\left|\tau_{s, -h}\eta(x)\right|\left|\tau_{s, h}u\left(x-he_s\right)\right|dx\cr\cr
	&\le&\left|h\right|\left\Arrowvert D\eta\right\Arrowvert_{L^{\infty}\left(B_t\right)}\int_{B_t} \left|f(x)\right|\left|\tau_{s, h}u\left(x-he_s\right)\right|dx\cr\cr
	&\le&\frac{c\left|h\right|}{t-\tilde{s}}\int_{B_t} \left|f(x)\right|\left|\tau_{s, h}u\left(x-he_s\right)\right|dx.
\end{eqnarray}

Now, if we apply H\"older's and Young's Inequality in \eqref{J_2*} with exponents $\left(\frac{np}{n\left(p-1\right)+2}, \frac{np}{n-2}\right)$, we get

\begin{eqnarray}\label{J_2}
	\left|J_2\right|&\le&\frac{c\left|h\right|}{t-\tilde{s}}\left(\int_{B_t} \left|f(x)\right|^\frac{np}{n\left(p-1\right)+2}dx\right)^\frac{n\left(p-1\right)+2}{np}\cdot\left(\int_{B_t}\left|\tau_{s, h}u\left(x-he_s\right)\right|^\frac{np}{n-2}dx\right)^\frac{n-2}{np}\cr\cr
	&\le&\frac{c_\sigma\left|h\right|^2}{\left(t-\tilde{s}\right)^\frac{np}{n\left(p-1\right)+2}}\int_{B_t} \left|f(x)\right|^\frac{np}{n\left(p-1\right)+2}dx+\sigma\left|h\right|^2\int_{B_{\lambda r}}\left|Du\left(x\right)\right|^\frac{np}{n-2}dx,
\end{eqnarray} 
where we also used Lemma \ref{le1}, since $Du\in L^{\frac{np}{n-2}}_{\loc}\left(\Omega\right)$.\\

By virtue of \eqref{J_1} and \eqref{J_2}, \eqref{I_4*} gives

\begin{eqnarray}\label{I_4}
	\left|I_4\right|&\le&c_\sigma\left|h\right|^2\int_{B_{R}}\left|f(x)\right|^\frac{np}{n\left(p-1\right)+2-p}dx+\frac{c_\sigma\left|h\right|^2}{\left(t-\tilde{s}\right)^\frac{np}{n\left(p-1\right)+2}}\int_{B_t} \left|f(x)\right|^\frac{np}{n\left(p-1\right)+2}dx
	\cr\cr
	&&+2\sigma\left|h\right|^2\int_{B_{\lambda r}}\left(\mu^2+\left|Du(x)\right|^2\right)^{\frac{np}{2\left(n-2\right)}}dx+\sigma\int_{B_{{\tilde{t}}}}\left|\tau_{s, h}V_p\left(Du(x)\right)\right|^2dx.
\end{eqnarray}

Inserting \eqref{I_1}, \eqref{I_2}, \eqref{I_3}, and \eqref{I_4} into \eqref{fullestimate}, and choosing $\varepsilon<\frac{\nu}{2}$, we get

\begin{eqnarray*}\label{fullestimate1}
	&&c\left(\nu\right)\int_\Omega\eta^2(x)\left(\mu^2+\left|Du(x)\right|^2+\left|Du\left(x+he_s\right)\right|^2\right)^\frac{p-2}{2}\left|\tau_{s, h}Du(x)\right|^2dx\cr\cr
	&\le&c\left|h\right|^2\left(\int_{B_{\lambda r}}\left(\mu^2+\left|Du(x)\right|^2\right)^\frac{np}{2\left(n-2\right)}dx\right)^\frac{n-2}{n}\cdot\left(\int_{B_{\lambda r}}g^{n}(x)dx\right)^\frac{2}{n}\cr\cr
	&&+2\sigma\int_{B_{\tilde{t}}}\left|\tau_{s, h}V_p\left(Du(x)\right)\right|^2dx+\frac{c_\sigma\left|h\right|^2}{\left(t-\tilde{s}\right)^2}\int_{B_{R}} \left(\mu^2+\left|Du\left(x\right)\right|^2\right)^\frac{p}{2}dx\cr\cr
	&&+c_\sigma\left|h\right|^2\int_{B_{R}}\left|f(x)\right|^\frac{np}{n\left(p-1\right)+2-p}dx+\frac{c_\sigma\left|h\right|^2}{\left(t-\tilde{s}\right)^\frac{np}{n\left(p-1\right)+2}}\int_{B_t} \left|f(x)\right|^\frac{np}{n\left(p-1\right)+2}dx
	\cr\cr
	&&+2\sigma\left|h\right|^2\int_{B_{\lambda r}}\left(\mu^2+\left|Du(x)\right|^2\right)^{\frac{np}{2\left(n-2\right)}}dx,
\end{eqnarray*}

for $\sigma>0$ that will be chosen later.\\ 
So, by \eqref{lemma6GPestimate1} and the properties of $\eta$, we have

\begin{eqnarray*}\label{tauhVp*}
 	&&\int_{B_{\tilde{s}}}\left|\tau_{s, h}V_p\left(Du(x)\right)\right|^2dx\cr\cr
 	&\le&3\sigma\left|h\right|^2\int_{B_{\lambda r}}\left(\mu^2+\left|Du(x)\right|^2\right)^\frac{np}{2\left(n-2\right)}dx+c_\sigma\left|h\right|^2\int_{B_{\lambda r}}g^{n}(x)dx\cr\cr
 	&&+2\sigma\int_{B_{\tilde{t}}}\left|\tau_{s, h}V_p\left(Du(x)\right)\right|^2dx+\frac{c_\sigma\left|h\right|^2}{\left(t-\tilde{s}\right)^2}\int_{B_{R}} \left(\mu^2+\left|Du\left(x\right)\right|^2\right)^\frac{p}{2}dx\cr\cr
 	&&+c_\sigma\left|h\right|^2\int_{B_{R}}\left|f(x)\right|^\frac{np}{n\left(p-1\right)+2-p}dx+\frac{c_\sigma\left|h\right|^2}{\left(t-\tilde{s}\right)^\frac{np}{n\left(p-1\right)+2}}\int_{B_t} \left|f(x)\right|^\frac{np}{n\left(p-1\right)+2}dx,
 \end{eqnarray*}

where we also used Young's Inequality with exponents $\left(\frac{n}{2}, \frac{n}{n-2}\right)$.\\
Now, Lemma \ref{le1} implies

\begin{eqnarray}\label{tauhVp}
	&&\int_{B_{\tilde{s}}}\left|\tau_{s, h}V_p\left(Du(x)\right)\right|^2dx\cr\cr
	&\le&3\sigma\left|h\right|^2\int_{B_{\lambda r}}\left(\mu^2+\left|Du(x)\right|^2\right)^\frac{np}{2\left(n-2\right)}dx+c\cdot\sigma\left|h\right|^2\int_{B_{\lambda r}}\left|DV_p\left(Du(x)\right)\right|^2dx\cr\cr
	&&+\frac{c_\sigma\left|h\right|^2}{\left(t-\tilde{s}\right)^2}\int_{B_{R}} \left(\mu^2+\left|Du\left(x\right)\right|^2\right)^\frac{p}{2}dx+c_\sigma\left|h\right|^2\int_{B_{\lambda r}}g^{n}(x)dx\cr\cr
	&&+c_\sigma\left|h\right|^2\int_{B_{R}}\left|f(x)\right|^\frac{np}{n\left(p-1\right)+2-p}dx+\frac{c_\sigma\left|h\right|^2}{\left(t-\tilde{s}\right)^\frac{np}{n\left(p-1\right)+2}}\int_{B_t} \left|f(x)\right|^\frac{np}{n\left(p-1\right)+2}dx.
\end{eqnarray}

Since \eqref{tauhVp} holds for any $s=1,..., n$ and, by virtue of the a priori assumption, $V_p\left(Dv\right)\in W^{1,2}_\loc\left(\Omega\right)$ and by Lemma \ref{Giusti8.2}, we get

\begin{eqnarray*}
	&&\int_{B_{\tilde{s}}}\left|DV_p\left(Du(x)\right)\right|^2dx\cr\cr
	&\le&c\cdot\sigma\int_{B_{{\lambda r}}}\left|DV_p\left(Du(x)\right)\right|^2+3\sigma\int_{B_{\lambda r}}\left(\mu^2+\left|Du(x)\right|^2\right)^\frac{np}{2\left(n-2\right)}dx\cr\cr
	&&+\frac{c_\sigma}{\left(t-\tilde{s}\right)^2}\int_{B_{R}} \left(\mu^2+\left|Du\left(x\right)\right|^2\right)^\frac{p}{2}dx+c_\sigma\int_{B_{\lambda r}}g^{n}(x)dx\cr\cr
	&&+c_\sigma\int_{B_{R}}\left|f(x)\right|^\frac{np}{n\left(p-1\right)+2-p}dx
	+\frac{c_\sigma}{\left(t-\tilde{s}\right)^\frac{np}{n\left(p-1\right)+2}}\int_{B_R} \left|f(x)\right|^\frac{np}{n\left(p-1\right)+2}dx,
\end{eqnarray*}
and since $t-\tilde{s}<1$, setting
\begin{equation}\label{beta}
\beta\left(n, p\right)=\max\Set{2, \frac{np}{n\left(p-1\right)+2}}, 
\end{equation}
we get
\begin{eqnarray}\label{beforeIterThm1*}
		&&\int_{B_{\tilde{s}}}\left|DV_p\left(Du(x)\right)\right|^2dx\cr\cr
	&\le&c\cdot\sigma\int_{B_{{\lambda r}}}\left|DV_p\left(Du(x)\right)\right|^2+3\sigma\int_{B_{\lambda r}}\left(\mu^2+\left|Du(x)\right|^2\right)^\frac{np}{2\left(n-2\right)}dx\cr\cr
	&&+\frac{c_\sigma}{\left(t-\tilde{s}\right)^{\beta\left(n, p\right)}}\left[\int_{B_{R}} \left(\mu^2+\left|Du\left(x\right)\right|^2\right)^\frac{p}{2}dx+\int_{B_{\lambda r}}g^{n}(x)dx\right.\cr\cr
	&&\left.+\int_{B_{R}}\left|f(x)\right|^\frac{np}{n\left(p-1\right)+2-p}dx
	+\int_{B_R} \left|f(x)\right|^\frac{np}{n\left(p-1\right)+2}dx\right].
\end{eqnarray}
Now let us notice that, since $\frac{np}{n\left(p-1\right)+2}<\frac{np}{n\left(p-1\right)+2-p}$, we have

\begin{eqnarray}\label{immersion1}
	\int_{B_R} \left|f(x)\right|^\frac{np}{n\left(p-1\right)+2}dx\le c\int_{B_R} \left|f(x)\right|^\frac{np}{n\left(p-1\right)+2-p}dx+c\left|B_R\right|.
\end{eqnarray}
Plugging \eqref{immersion1} into \eqref{beforeIterThm1*}, we get

\begin{eqnarray}\label{beforeIterThm1**}
	&&\int_{B_{\tilde{s}}}\left|DV_p\left(Du(x)\right)\right|^2dx\cr\cr
	&\le&c\cdot\sigma\int_{B_{{\lambda r}}}\left|DV_p\left(Du(x)\right)\right|^2+3\sigma\int_{B_{\lambda r}}\left(\mu^2+\left|Du(x)\right|^2\right)^\frac{np}{2\left(n-2\right)}dx\cr\cr
	&&+\frac{c_\sigma}{\left(t-\tilde{s}\right)^{\beta\left(n, p\right)}}\left[\int_{B_{R}} \left(\mu^2+\left|Du\left(x\right)\right|^2\right)^\frac{p}{2}dx+\int_{B_{\lambda r}}g^{n}(x)dx\right.\cr\cr
	&&\left.+\int_{B_R} \left|f(x)\right|^\frac{np}{n\left(p-1\right)+2-p}dx+\left|B_R\right|\right].
\end{eqnarray}

Moreover, applying Sobolev's inequality to the function $V_p\left(Du\right)$ and recalling \eqref{stinorma}, for a positive constant $c=c(n, p)$ we get

\begin{eqnarray}\label{V_pSobolev}
	\int_{B_{\lambda r}}\left|Du(x)\right|^\frac{np}{n-2}dx&\le&	c\int_{B_{\lambda r}}\left|V_p\left(Du(x)\right)\right|^\frac{2n}{n-2}dx+c\mu^\frac{np}{n-2}\left|B_R\right|\cr\cr
	&\le&c\int_{B_{\lambda r}}\left|DV_p\left(Du(x)\right)\right|^2dx+c\int_{B_{\lambda r}}\left|V_p\left(Du(x)\right)\right|^2dx\cr\cr
	&&+c\left|B_R\right|\cr\cr
	&\le&c\int_{B_{\lambda r}}\left|DV_p\left(Du(x)\right)\right|^2dx+c\int_{B_R}\left(\mu^2+\left|Du(x)\right|^2\right)^\frac{p}{2}dx\cr\cr
	&&+c\left|B_R\right|,
\end{eqnarray}

where we also used the fact that $\mu\in[0, 1]$.\\
Now, plugging \eqref{V_pSobolev} into \eqref{beforeIterThm1**}, and recalling that $t-\tilde{s}<1$ and $\lambda r<R$, we get

\begin{eqnarray*}
	\int_{B_{\tilde{s}}}\left|DV_p\left(Du(x)\right)\right|^2dx
	&\le&c\cdot\sigma\int_{B_{{\lambda r}}}\left|DV_p\left(Du(x)\right)\right|^2\cr\cr
	&&+\frac{c_\sigma}{\left(t-\tilde{s}\right)^{\beta\left(n, p\right)}}\left[\int_{B_{R}} \left(\mu^2+\left|Du\left(x\right)\right|^2\right)^\frac{p}{2}dx+\int_{B_{R}}g^{n}(x)dx\right.\cr\cr
	&&\left.+\int_{B_R} \left|f(x)\right|^\frac{np}{n\left(p-1\right)+2-p}dx+\left|B_R\right|\right].
\end{eqnarray*}
and choosing $\sigma>0$ such that 
$$
c\cdot\sigma=\frac{1}{2}, 
$$
we get
\begin{eqnarray}\label{DVpDu}
\int_{B_{\tilde{s}}}\left|DV_p\left(Du(x)\right)\right|^2dx&\le&\frac{1}{2}\int_{B_{{\lambda r}}}\left|DV_p\left(Du(x)\right)\right|^2\cr\cr
&&+\frac{c}{\left(t-\tilde{s}\right)^{\beta\left(n, p\right)}}\left[\int_{B_{R}} \left(\mu^2+\left|Du\left(x\right)\right|^2\right)^\frac{p}{2}dx+\int_{B_{R}}g^{n}(x)dx\right.\cr\cr
&&\left.+\int_{B_R} \left|f(x)\right|^\frac{np}{n\left(p-1\right)+2-p}dx+\left|B_R\right|\right].
\end{eqnarray}

Since \eqref{DVpDu} holds for any $\frac{R}{2}\le r<\tilde{s}<t<\lambda r<R,$ with $1<\lambda<2$, and the constant $c$ depends on $n, p, L, \nu, L_1, \ell$ but is independent of the radii, we can take the limit as $\tilde{s}\to r$ and $t\to\lambda r$, thus getting

\begin{eqnarray*}\label{lambdaDVpDu}
\int_{B_{r}}\left|DV_p\left(Du(x)\right)\right|^2dx&\le&\frac{1}{2}\int_{B_{\lambda r}}\left|DV_p\left(Du(x)\right)\right|^2dx\cr\cr
&&+\frac{c}{r^{\beta\left(n, p\right)}\left(\lambda-1\right)^{\beta\left(n, p\right)}}\left[\int_{B_{R}} \left(\mu^2+\left|Du\left(x\right)\right|^2\right)^\frac{p}{2}dx+\int_{B_{R}}g^{n}(x)dx\right.\cr\cr
&&\left.+\int_{B_R} \left|f(x)\right|^\frac{np}{n\left(p-1\right)+2-p}dx+\left|B_R\right|\right].
\end{eqnarray*}

Now, if we set

$$
h(r)=\int_{B_r}\left|DV_p\left(Du(x)\right)\right|^2dx,
$$

$$
A=c\left[\int_{B_{R}} \left(\mu^2+\left|Du\left(x\right)\right|^2\right)^\frac{p}{2}dx+\int_{B_{R}}g^{n}(x)dx+\int_{B_R} \left|f(x)\right|^\frac{np}{n\left(p-1\right)+2-p}dx+\left|B_R\right|\right]
$$

and

$$
B=0, 
$$

and apply Lemma \ref{iter} with 

$$
\theta=\frac{1}{2}\qquad\mbox{ and }\qquad \gamma=\beta\left(n, p\right), 
$$

we get

\begin{eqnarray}\label{aprioriestimate1*}
\int_{B_{\frac{R}{2}}}\left|DV_p\left(Du(x)\right)\right|^2dx&\le&\frac{c}{R^{\beta\left(n, p\right)}}\left[\int_{B_{R}} \left(\mu^2+\left|Du\left(x\right)\right|^2\right)^\frac{p}{2}dx\right.\cr\cr
&&\left.+\int_{B_R} \left|f(x)\right|^\frac{np}{n\left(p-1\right)+2-p}dx+\int_{B_{R}}g^{n}(x)dx+\left|B_R\right|\right],
\end{eqnarray}
that is the desired a priori estimate.
\medskip\\
{\bf Step 2: the approximation.}\\
Now we want to complete the proof of Theorem \ref{CGPThm1}, using the a priori estimate \eqref{aprioriestimate1*}, and a classical approximation argument.

Let us consider an open set $\Omega'\Subset\Omega$, and a function $\phi\in C^{\infty}_0(B_1(0))$ such that $0\le\phi\le1$ and $\int_{B_1(0)}\phi(x)dx=1$, and a standard family of mollifiers $\set{\phi_\varepsilon}_\varepsilon$  defined as follows

\begin{equation*}
	\phi_\varepsilon(x)=\frac{1}{\varepsilon^n}\phi\left(\frac{x}{\varepsilon}\right),
\end{equation*}

for any $\varepsilon\in\left(0, d\left(\Omega', \partial\Omega\right)\right)$, so that, for each $\varepsilon$, $\phi_\varepsilon\in C^{\infty}_0\left(B_\varepsilon(0)\right)$, $0\le\phi_\varepsilon\le1$, $\int_{B_\varepsilon(0)}\phi_\varepsilon(x)dx=1.$

It is well known that, for any $h\in L^1_{\loc}\left(\Omega'\right)$, setting 
\begin{equation*}
	h_\varepsilon(x)=h\ast\phi_\varepsilon(x)=\int_{B_\varepsilon}\phi_\varepsilon(y)h(x+y)dy=\int_{B_1}\phi(\omega)h(x+\varepsilon\omega)d\omega,
\end{equation*}
we have $h_\varepsilon\in C^\infty\left(\Omega'\right)$.

Let us fix a ball $B_{\tilde{R}}=B_{\tilde{R}}\left(x_0\right)\Subset\Omega'$, with $\tilde{R}<1$ and, for each $\varepsilon\in\left(0, d\left(\Omega', \partial\Omega\right)\right)$, let us consider the functional
\begin{equation*}
\mathcal{F}_\varepsilon\left(w, B_{\tilde{R}}\right)=\int_{ B_{\tilde{R}}}\left[ F_\varepsilon\left(x,Dw(x)\right)-f_\varepsilon(x)\cdot
		w(x)\right]dx,
\end{equation*}
where
\begin{equation}\label{Fepsdef}
	F_\varepsilon(x,\xi)=\int_{B_1}\phi(\omega)F(x+\varepsilon\omega, \xi)d\omega
\end{equation}
and
\begin{equation}\label{fepsdef}
	f_\varepsilon=f\ast\phi_\varepsilon.
\end{equation}

Let us recall that

\begin{equation}\label{convF}
		\int_{B_{\tilde{R}}}F_\varepsilon\left(x, \xi\right)dx\to\int_{B_{\tilde{R}}}F\left(x, \xi\right)dx,\qquad\mbox{ as }\varepsilon\to0
\end{equation}

for any $\xi\in\R^{n\times N}.$\\
Moreover, since $f\in L^{\frac{np}{n\left(p-1\right)+2-p}}_{\loc}\left(\Omega\right)$, we have

\begin{equation}\label{convf}
	f_\varepsilon\to f \qquad\mbox{ strongly in }L^{\frac{np}{n\left(p-1\right)+2-p}}\left(B_{\tilde{R}}\right),
\end{equation}

and since $\frac{np}{n\left(p-1\right)+2-p}>\frac{np}{n\left(p-1\right)+p}=\frac{p^*}{p^*-1}$, we also have

\begin{equation}\label{convfp*'}
	f_\varepsilon\to f \qquad\mbox{ strongly in }L^{\frac{p^*}{p^*-1}}\left(B_{\tilde{R}}\right),
\end{equation}
as $\varepsilon\to0.$\\

It is easy to check that \eqref{F1}--\eqref{F4} imply
\begin{equation}\label{F1eps}
	\ell \left(\mu^2+\left|\xi\right|^2\right)^\frac{p}{2}\le F_\varepsilon(x,\xi)\le L\left(\mu^2+\left|\xi\right|^2\right)^\frac{p}{2},
\end{equation}

\begin{equation}\label{F2eps}
	\langle D_{\xi}F_\varepsilon(x,\xi)-D_{\xi}F_\varepsilon(x,\eta),\xi-\eta\rangle \ge \nu\left(\mu^2+\left|\xi\right|^2+\left|\eta\right|^2\right)^\frac{p-2}{2}|\xi-\eta|^{2},
\end{equation}

\begin{equation}\label{F3eps}
	\left|D_{\xi}F_\varepsilon(x,\xi)-D_{\xi}F_\varepsilon(x,\eta)\right| \le L_1\left(\mu^2+\left|\xi\right|^2+\left|\eta\right|^2\right)^\frac{p-2}{2}|\xi-\eta|,
\end{equation}

\begin{equation}\label{F4eps}
	\left|D_{\xi}F_\varepsilon\left(x,\xi\right)-D_{\xi}F_\varepsilon\left(y,\xi\right)\right|\le\left(g_\varepsilon(x)+g_\varepsilon(y)\right)\left(\mu^2+\left|\xi\right|^2\right)^\frac{p-1}{2}\left|x-y\right|,
\end{equation}
for a.e. $x, y\in B_{\tilde{R}}$ and  every $\xi, \eta\in\R^{n\times N}$, where 
\begin{equation}\label{gepsdef}
	g_\varepsilon=g\ast\phi_\varepsilon.
\end{equation}
Since $g\in L^{n}_{\loc}\left(\Omega\right)$, we have
\begin{equation}\label{convg}
	g_\varepsilon\to g\qquad\mbox{ strongly in }L^n\left(B_{\tilde{R}}\right),\mbox{ as } \varepsilon\to0.
\end{equation}
For each $\varepsilon$, let $v_\varepsilon\in u+W^{1,p}_{0}(B_R)$ be the solution to
$$\min\Set{\mathcal{F}_\varepsilon\left(v,B_{\tilde{R}}\right): v\in u+W^{1,p}_{0}\left(B_{\tilde{R}}\right)},$$
where $u\in W^{1,p}_{\loc}\left(\Omega\right)$ is a local minimizer of \eqref{modenergy}.\\
By virtue of the minimality of $v_\varepsilon$, we have 
	\begin{equation*}
		\int_{B_{\tilde{R}}}\left[F_\varepsilon\left(x,Dv_\varepsilon(x)\right)-f_\varepsilon(x)\cdot v_\varepsilon(x)\right]dx\le\int_{B_{\tilde{R}}}\left[F_\varepsilon\left(x,Du(x)\right)-f_\varepsilon(x)\cdot u(x)\right]dx,
	\end{equation*} 
which means
\begin{equation*}
	\int_{B_{\tilde{R}}}F_\varepsilon\left(x,Dv_\varepsilon(x)\right)dx\le\int_{B_{\tilde{R}}}\left[F_\varepsilon\left(x,Du(x)\right)+f_\varepsilon(x)\cdot\left(v_\varepsilon(x)-u(x)\right)\right]dx,
\end{equation*} 
and by \eqref{F1eps} we get
\begin{eqnarray}\label{stima1*}
\ell\int_{B_{\tilde{R}}}\left(\mu^2+\left|Dv_\varepsilon(x)\right|^2\right)^\frac{p}{2}dx&\le&\int_{B_{\tilde{R}}}F_\varepsilon\left(x,Dv_\varepsilon(x)\right)dx\cr\cr
&\le&\int_{B_{\tilde{R}}}\left[F_\varepsilon\left(x,Du(x)\right)+f_\varepsilon(x)\cdot\left(v_\varepsilon(x)-u(x)\right)\right]dx\cr\cr
&\le&L\int_{B_{\tilde{R}}}\left(\mu^2+\left|Du(x)\right|^2\right)^\frac{p}{2}dx+\cr\cr
&&+\int_{B_{\tilde{R}}}\left|f_\varepsilon(x)\right|\left|v_\varepsilon(x)-u(x)\right|dx.
\end{eqnarray}

If we use H\"{o}lder's and Young's inequalities with exponents $\left(p^*, \frac{p^*}{p^*-1}\right)$ in \eqref{stima1*} and apply Sobolev's inequality to the function $v_\varepsilon-u\in W^{1, p}_0\left(B_{\tilde{R}}\right)$, for any $\sigma>0$, we get
 
\begin{eqnarray}\label{stima1**}
	&&\ell\int_{B_{\tilde{R}}}\left(\mu^2+\left|Dv_\varepsilon(x)\right|^2\right)^\frac{p}{2}dx\cr\cr
	&\le&L\int_{B_{\tilde{R}}}\left(\mu^2+\left|Du(x)\right|^2\right)^\frac{p}{2}dx+c_\sigma\int_{B_{\tilde{R}}}\left|f_\varepsilon(x)\right|^{\frac{p^*}{p^*-1}}dx\cr\cr
	&&+\sigma\int_{B_{\tilde{R}}}\left|v_\varepsilon(x)-u(x)\right|^{p^*}dx\cr\cr
	&\le&L\int_{B_{\tilde{R}}}\left(\mu^2+\left|Du(x)\right|^2\right)^\frac{p}{2}dx+c_\sigma\int_{B_{\tilde{R}}}\left|f_\varepsilon(x)\right|^{\frac{p^*}{p^*-1}}dx\cr\cr
	&&+\sigma\left(\int_{B_{\tilde{R}}}\left|Dv_\varepsilon(x)-Du(x)\right|^{p}dx\right)\cr\cr
	&\le&c_\sigma\int_{B_{\tilde{R}}}\left(\mu^2+\left|Du(x)\right|^2\right)^\frac{p}{2}dx+c_\sigma\int_{B_{\tilde{R}}}\left|f_\varepsilon(x)\right|^{\frac{p^*}{p^*-1}}dx\cr\cr
	&&+\sigma\left(\int_{B_{\tilde{R}}}\left(\mu^2+\left|Dv_\varepsilon(x)\right|^2\right)^\frac{p}{2}dx\right).
\end{eqnarray}

Now, if we choose $\sigma<\frac{\ell}{2}$ in \eqref{stima1**}, we have

\begin{eqnarray}\label{stima1***}
	&&\ell\int_{B_{\tilde{R}}}\left(\mu^2+\left|Dv_\varepsilon(x)\right|^2\right)^\frac{p}{2}dx\cr\cr
	&\le&L\int_{B_{\tilde{R}}}\left(\mu^2+\left|Du(x)\right|^2\right)^\frac{p}{2}dx+c\int_{B_{\tilde{R}}}\left|f_\varepsilon(x)\right|^{\frac{p^*}{p^*-1}}dx.
\end{eqnarray}

By virtue of \eqref{convfp*'}, \eqref{stima1***} implies that $\Set{v_\varepsilon}_\varepsilon$ is bounded in $W^{1, p}_{\loc}\left(B_{\tilde{R}}\right)$.
Therefore  there exists $v\in W^{1,p}\left(B_{\tilde{R}}\right)$ such that

	\begin{equation*}\label{convdebLp}
		v_\varepsilon\rightharpoonup v\qquad\mbox{ weakly in }W^{1,p}\left(B_{\tilde{R}}\right),
	\end{equation*} 

\begin{equation*}\label{convforteLp}
	v_\varepsilon\to v\qquad\mbox{ strongly in }L^{p}\left(B_{\tilde{R}}\right),
\end{equation*} 

and

\begin{equation*}\label{aeconv}
	v_\varepsilon\to v\qquad\mbox{ almost everywhere in }B_{\tilde{R}},
\end{equation*}

up to a subsequence, as $\varepsilon\to0$.\\
On the other hand, since $V_p\left(Dv_\varepsilon\right)\in W^{1,2}_{\loc}\left(B_{\tilde{R}}\right)$ and so $Dv_\varepsilon \in W^{1,p}_{\loc}\left(B_{\tilde{R}}\right)$ we are legitimated to apply estimates \eqref{aprioriestimate1*}, thus getting
\begin{eqnarray}\label{dersecapp1}
&&\int_{B_{\frac{r}{2}}}\left|DV_p\left(Dv_\varepsilon(x)\right)\right|^2dx\cr\cr
&\le&\frac{c}{r^{\beta\left(n, p\right)}}\left[\int_{B_{r}} \left(\mu^2+\left|Dv_\varepsilon\left(x\right)\right|^2\right)^\frac{p}{2}dx+\int_{B_R} \left|f_\varepsilon(x)\right|^\frac{np}{n\left(p-1\right)+2-p}dx\right.\cr\cr
&&\left.+c\int_{B_{r}}g^{n}_\varepsilon(x)dx+\left|B_r\right|^{\frac{n\left(p-1\right)+2}{np}}\right],
\end{eqnarray}
for any ball $B_r\Subset B_{\tilde{R}}$.

By virtue of \eqref{convf}, \eqref{convfp*'}, \eqref{convg} and \eqref{stima1***}, the right hand side of \eqref{dersecapp1} can be bounded independently of $\varepsilon$. For this reason, recalling Lemma \ref{differentiabilitylemma}, we also infer that, for each $\varepsilon$, $v_\varepsilon\in W^{2, p}_{\loc}\left(B_{\tilde{R}}\right)$, and recalling \eqref{differentiabilityestimate}, we also deduce that $\Set{v_\varepsilon}_\varepsilon$ is bounded in $W^{2, p}_{\loc}\left(B_{r}\right)$\\ 
Hence

\begin{equation*}\label{vconvdebWp}
	v_\varepsilon\rightharpoonup v\qquad\mbox{ weakly in } W^{2,p}\left(B_{r}\right),
\end{equation*}

\begin{equation}\label{vconvforW1p}
		v_\varepsilon\to v\qquad\mbox{ strongly in } W^{1,p}\left(B_{r}\right),
\end{equation}

and

\begin{equation}\label{aeconvDv}
	Dv_\varepsilon\to Dv
\qquad\mbox{ almost everywhere in }B_r,
\end{equation}

up to a subsequence, as $\varepsilon\to0$.\\
Moreover, by the continuity of $\xi\mapsto DV_p(\xi)$ and \eqref{aeconvDv}, we get $DV_p\left(Dv_\varepsilon\right)\to DV_p\left(Dv\right)$ almost everywhere, and since the right-hand side of \eqref{dersecapp1} can be bounded independently of $\varepsilon$, by Fatou's Lemma, passing to the limit as $\varepsilon\to0$ in \eqref{dersecapp1}, by \eqref{convf}, \eqref{convg} and \eqref{vconvforW1p}, we get

\begin{eqnarray}\label{dersecapp1*}
&&\int_{B_{\frac{r}{2}}}\left|DV_p\left(Dv(x)\right)\right|^2dx\cr\cr
&\le&\frac{c}{r^{\beta\left(n, p\right)}}\left[\int_{B_{r}} \left(\mu^2+\left|Dv\left(x\right)\right|^2\right)^\frac{p}{2}dx+\int_{B_R} \left|f(x)\right|^\frac{np}{n\left(p-1\right)+2-p}dx\right.\cr\cr
&&\left.+c\int_{B_{r}}g^{n}(x)dx+\left|B_r\right|^{\frac{n\left(p-1\right)+2}{np}}\right].
\end{eqnarray}
Our final step is to prove that $u=v$ a.e. in $B_{\tilde{R}}$.\\
First, let us observe that, using H\"older's inequality with exponents $\left(p^*, \frac{p^*}{p^*-1}\right)$, we get

\begin{eqnarray}\label{finalconv1*}
	&&\int_{B_{\tilde{R}}}\left|f_\varepsilon(x)-f(x)\right|\left|u(x)\right|dx\cr\cr
	&\le&\left(\int_{B_{\tilde{R}}}\left|u(x)\right|^{p^*}dx\right)^\frac{1}{{p^*}}\cdot\left(\int_{B_{\tilde{R}}}\left|f_\varepsilon(x)-f(x)\right|^{\frac{p^*}{p^*-1}}dx\right)^{\frac{p^*-1}{p^*}}, 
\end{eqnarray}
and recalling \eqref{convF} and \eqref{convfp*'}, \eqref{finalconv1*} implies

\begin{equation}\label{finalconv1}
\lim_{\varepsilon\to 0}\int_{B_{\tilde{R}}}\left[F_\varepsilon\left(x,Du(x)\right)-f_\varepsilon(x)\cdot
	u(x)\right]dx=\int_{B_{\tilde{R}}}\left[F\left(x,Du(x)\right)-f(x)\cdot
	u(x)\right].dx
\end{equation}

The minimality of $u$, Fatou's Lemma, the lower semicontinuity of $\mathcal{F}_\varepsilon$ and the minimality of $v_\varepsilon$ imply
	\begin{eqnarray*}
		&&\int_{B_{\tilde{R}}}\left[F\left(x,Du(x)\right)-f(x)\cdot
		u(x)\right]dx\cr\cr
		&\le&\int_{B_{\tilde{R}}}\left[F\left(x,Dv(x)\right)-f(x)\cdot
		v(x)\right]dx\cr\cr
		&\le&\liminf_{\varepsilon\to 0}\int_{B_{\tilde{R}}}\left[F\left(x,Dv_\varepsilon(x)\right)-f(x)\cdot
		v_\varepsilon(x)\right]dx\cr\cr
		&=&\liminf_{\varepsilon\to 0} \int_{B_{\tilde{R}}}\left[ F_\varepsilon\left(x,Dv_\varepsilon(x)\right)-f_\varepsilon(x)\cdot
		v_\varepsilon(x)\right]dx\cr\cr
		&\le&\liminf_{\varepsilon\to 0} \int_{B_{\tilde{R}}}\left[ F_\varepsilon\left(x,Du(x)\right)-f_\varepsilon(x)\cdot
		u(x)\right]dx\cr\cr
		&=&\int_{B_{\tilde{R}}}\left[F\left(x,Du(x)\right)-f(x)\cdot
		u(x)\right]dx,
	\end{eqnarray*}
where the last equivalence follows by \eqref{finalconv1}. Therefore, all the previous inequalities hold as equalities and $\mathcal{F}\left(Du,B_{\tilde{R}}\right)=\mathcal{F}\left(Dv,B_{\tilde{R}}\right)$. The strict convexity of the functional yields that $u=v$ a.e. in $B_{\tilde{R}}$, and since the map $\xi\mapsto V_p(\xi)$ is of class $C^1$, we also have $DV_p\left(Du\right)=DV_p\left(Dv\right)$ almost everywhere in $B_{\tilde{R}}$, and by \eqref{dersecapp1*}, using a standard covering argument, we can conclude with estimate \eqref{mainestimateVp}.
\end{proof}

Thanks to Lemma \ref{differentiabilitylemma}, it is easy to prove the following consequence of Theorem \ref{CGPThm1}.

\begin{corollary}\label{corollary1}
	Let $\Omega\subset\R^n$ be a bounded open set, and $1<p<2$.\\
	Let $u\in W^{1, p}_{\loc}\left(\Omega, \R^N\right)$ be a local minimizer of the functional \eqref{modenergy}, under the assumptions \eqref{F1}--\eqref{F4}, with  
	
	$$f\in L^{\frac{np}{n(p-1)+2-p}}_{\loc}(\Omega)\qquad\text{and}\qquad g\in L^{n}_{\loc}\left(\Omega\right).$$
	
	Then $u\in W^{2, p}_{\loc}\left(\Omega\right)$.
\end{corollary}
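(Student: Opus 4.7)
The plan is to combine Theorem \ref{CGPThm1} with Lemma \ref{differentiabilitylemma} in a direct way, since the corollary is essentially a packaging statement: Theorem \ref{CGPThm1} produces the auxiliary $V_p$-regularity, and Lemma \ref{differentiabilitylemma} converts that into ordinary $W^{2,p}$-regularity for the minimizer itself.

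First, I would invoke Theorem \ref{CGPThm1} under the given assumptions $f\in L^{\frac{np}{n(p-1)+2-p}}_{\loc}(\Omega)$ and $g\in L^n_{\loc}(\Omega)$. Its conclusion is precisely that $V_p(Du)\in W^{1,2}_{\loc}(\Omega)$, together with the quantitative estimate \eqref{mainestimateVp} on every compactly contained ball $B_R\Subset\Omega$. This is the nontrivial input, and no further analysis of the Euler--Lagrange system or approximation is needed at this stage, since all that work is already carried out in the proof of Theorem \ref{CGPThm1}.

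Next, I would apply Lemma \ref{differentiabilitylemma} to $v=u$. The hypotheses of the lemma require exactly that $u\in W^{1,p}_{\loc}(\Omega,\R^N)$ (which is part of the notion of local minimizer) and that $V_p(Du)\in W^{1,2}_{\loc}(\Omega)$ (just obtained from Theorem \ref{CGPThm1}). The conclusion of the lemma then yields $u\in W^{2,p}_{\loc}(\Omega)$, together with the local estimate \eqref{differentiabilityestimate} relating $\int_{B_r}|D^2u|^p$ to $\int_{B_R}|D(V_p(Du))|^2$ and $\int_{B_R}|Du|^p$ on arbitrary pairs of concentric balls $B_r\subset B_R\Subset\Omega$.

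Since both $\int_{B_R}|D(V_p(Du))|^2$ and $\int_{B_R}|Du|^p$ are finite on every $B_R\Subset\Omega$, combining \eqref{mainestimateVp} and \eqref{differentiabilityestimate} also gives a quantitative estimate for $\|D^2u\|_{L^p(B_{R/2})}$ in terms of the data $\mu$, $f$, $g$ and the local $L^p$ norm of $Du$; this is not requested in the statement but falls out automatically. I do not anticipate any obstacle here: the only point to be careful with is that Lemma \ref{differentiabilitylemma} is stated for balls $B_R\Subset\Omega$, so the conclusion $u\in W^{2,p}_{\loc}(\Omega)$ is obtained via a standard covering argument of $\Omega$ by such balls.
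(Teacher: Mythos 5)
Your proposal is correct and is exactly the argument the paper has in mind: the authors introduce Corollary~\ref{corollary1} with the remark that it follows from Theorem~\ref{CGPThm1} ``thanks to Lemma~\ref{differentiabilitylemma},'' which is precisely your two-step chain (Theorem~\ref{CGPThm1} gives $V_p(Du)\in W^{1,2}_{\loc}$, then Lemma~\ref{differentiabilitylemma} with $v=u$ gives $u\in W^{2,p}_{\loc}$). Nothing further is needed.
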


\section{A Counterexample}\label{Counterexample}
The aim of this section is to show that we cannot weaken the assumption $f\in L^\frac{np}{n(p-1)+2-p}_{\loc}\left(\Omega\right)$ in the scale of Lebesgue spaces.\\
Our example also shows that this phenomenon is independent of the presence of the coefficients, but it depends only on the sub-quadratic growth of the energy density.\\
For $\alpha\in\R$, let us set

$$\beta=\left(\alpha-1\right)\left(p-1\right)-1,$$ 

and consider the functional 

\begin{equation*}\label{ceEnergy}
	\mathcal{F_\alpha}\left(u,\Omega\right)=\int_{\Omega}\left[\left|Du(x)\right|^p-\alpha\left(n+\beta\right)\left|\alpha\right|^{p-2}\left|x\right|^\beta u(x)\right]dx, 
\end{equation*}

where $\Omega\subset\R^n$ is a bounded open set containing the origin, $1<p<2$, $u:\R^n\to\R$.\\
Using the classical notation for the $p$-Laplacian

$$
\Delta_p u=\div\left(\left|Du\right|^{p-2}\cdot Du\right),
$$

a local minimizer of this functional is a weak solution to the $p$-Poisson equation 

\begin{equation}\label{ceEquation}
	\Delta_p u=f_{\alpha}, 
\end{equation}

with

\begin{equation*}\label{ceDatum}
	f_{\alpha}(x)=\alpha\left(n+\beta\right)\left|\alpha\right|^{p-2}\left|x\right|^\beta. 
\end{equation*}

Before going further, let us notice that \eqref{ceEquation} is an autonomous equation, whose solutions are scalar functions, so the problem we're dealing with is much less general with respect to the assumption we considered in order to prove our main result.\\
It is easy to check that, for any $\alpha\in\R$, the function

\begin{equation*}\label{ceSolution}
u_\alpha(x)=\left|x\right|^\alpha
\end{equation*}

is a solution to \eqref{ceEquation}.\\
Indeed, since, for each $i=1, ..., n$, we have

$$
D_{x_i}u_\alpha(x)=\alpha\left|x\right|^{\alpha-2}x_i,
$$

we get

$$
\left|Du_\alpha(x)\right|=\left|\alpha\right|\left|x\right|^{\alpha-1}.
$$

So, for every $i=1, ..., n$, since $\beta=\left(\alpha-1\right)\left(p-1\right)-1,$ we get

$$
\left|Du_\alpha(x)\right|^{p-2} D_{x_i}u_\alpha(x)=\alpha\left|\alpha\right|^{p-2}\left|x\right|^{(p-1)\cdot(\alpha-1)-1} x_i=\alpha\left|\alpha\right|^{p-2}\left|x\right|^{\beta} x_i
$$

and 

$$
\frac{\partial}{\partial x_i}\left(\left|Du_\alpha(x)\right|^{p-2}D_{x_i}u_\alpha(x)\right)=\alpha\left|\alpha\right|^{p-2}\left|x\right|^{\beta}\left(1+\frac{\beta x_i^2}{\left|x\right|^2}\right), 
$$

so 

\begin{eqnarray*}
	\Delta_p u_\alpha(x)&=&\div\left(\left|Du_\alpha(x)\right|^{p-2}\cdot Du_\alpha(x)\right)=\sum_{i=1}^n\frac{\partial}{\partial x_i}\left(\left|Du_\alpha(x)\right|^{p-2}D_{x_i}u_\alpha(x)\right)\cr\cr
	&=&\alpha\left|\alpha\right|^{p-2}\left|x\right|^{\beta}\sum_{i=1}^n\left(1+\frac{\beta x_i^2}{\left|x\right|^2}\right)=\alpha\left|\alpha\right|^{p-2}\left(n+\beta\right)\left|x\right|^{\beta}\cr\cr
	&=&f_\alpha(x)
\end{eqnarray*}

Moreover, for further needs, we observe that
$$
\left|D^2u_\alpha(x)\right|=c(\alpha)\cdot\left|x\right|^{\alpha-2},
$$
for a constant $c(\alpha)\ge0$.
Choosing 
$$\alpha-1=\frac{2-n}{p}$$ we
have
$$f_\alpha= C_1(\alpha, n, p)|x|^{\frac{(2-n)(p-1)}{p}-1},$$
where $C_1(\alpha, n, p)$ ia real constant, and 
$$
\left|f_\alpha\right|^\frac{np}{n(p-1)+2-p}=C(\alpha, n, p)\left|x\right|^{-n},
$$
with $C(\alpha, n, p)\ge0$.\\
Therefore with such a choice of $\alpha$, we have $f_\alpha\in L^{\frac{np}{n\left(p-1\right)+2-p}-\varepsilon}\left(B_1(0)\right)$ for every $\varepsilon>0$,  but $f_\alpha$ doesn't belong to $L^{\frac{np}{n(p-1)+2-p}}(B_1(0))$.\\
With the same choice of $\alpha$ we have
$$
\left|Du_\alpha(x)\right|^{p-2}\cdot\left|D^2u_\alpha(x)\right|^2=c(n, p)\cdot\left|x\right|^{p\cdot(\alpha-1)-2}=c(n, p)\cdot\left|x\right|^{2-n-2}=c(n, p)\cdot\left|x\right|^{-n}, 
$$
that doesn't belong to $L^1\left(B_1(0)\right)$. Therefore we cannot weaken the assumption on datum $f$ in the scale of Lebesgue spaces and obtain the same regularity for the second derivatives of the solution $u$.

\section{A preliminary higher differentiability result}\label{Preliminarypf}
In this section we consider the following functional
\begin{equation}\label{modenergym}
	\mathcal{F}_m\left(w, \Omega\right)=\int_{\Omega}\left[F\left(x, Dw(x)\right)-f(x)\cdot w(x)+\left(\left|w(x)\right|-a\right)^{2m}_+\right]dx,
\end{equation}
	where $a>0$, $m>1$, and the function $F$ still satisfies \eqref{F1}--\eqref{F4}.\\
It is clear from the definition, and by our assumptions, that the functional in \eqref{modenergym} admits minimizers in $W^{1,p}_{\loc}\left(\Omega\right)\cap L^{2m}_\loc\left(\Omega\right)$.\\
We want to prove the following higher differentiability result for local minimizers of the functional $\mathcal{F}_m$.

\begin{thm}\label{approxmthm}
	Let $\Omega\subset\R^n$ be a bounded open set, $m>1$, $a>0$ and $1<p<2$.\\
	Let $v\in W^{1, p}_{\loc}\left(\Omega, \R^N\right)\cap L^{2m}_{\loc}\left(\Omega, \R^N\right)$ be a local minimizer of the functional \eqref{modenergym}, under the assumptions \eqref{F1}--\eqref{F4}, with  
	
	$$f\in L^{\frac{2m\left(p+2\right)}{2mp+p-2}}_{\loc}\left(\Omega\right)\qquad\mbox{and}\qquad g\in L^{\frac{2m\left(p+2\right)}{2m-p}}_{\loc}\left(\Omega\right).$$
	
	Then $V_p\left(Dv\right)\in W^{1, 2}_{\loc}(\Omega)$, and the estimate
	\begin{eqnarray}\label{approxmestimate}
&&\int_{B_{\frac{R}{2}}}\left|DV_p\left(Dv(x)\right)\right|^2dx\cr\cr
&\le&\frac{c}{R^{\frac{p+2}{p}}}\left[\int_{B_{R}} \left(\mu^2+\left|Dv\left(x\right)\right|^2\right)^\frac{p}{2}dx\right.\cr\cr
&&\left.+\left(\int_{B_{4 R}}\left|v(x)\right|^{2m}dx\right)^\frac{1}{m+1}\cdot\left(\int_{B_{4 R}}\left(\mu^2+\left|Dv(x)\right|^2\right)^\frac{p}{2}dx\right)^\frac{m}{m+1}\right.\cr\cr
&&\left.\int_{B_{R}}g^{\frac{2m\left(p+2\right)}{2m-p}}(x)dx+\int_{B_R}\left|f(x)\right|^\frac{2m\left(p+2\right)}{2mp+p-2}dx+\left|B_R\right|+1\right],
	\end{eqnarray}
	holds true for any ball $B_{4R}\Subset\Omega$.
\end{thm}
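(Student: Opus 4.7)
The strategy is the same two-step template used for Theorem \ref{CGPThm1}: first derive an a priori estimate assuming $V_p(Dv)\in W^{1,2}_{\loc}$, then recover the result by approximation. What is new is the penalization term $(|w|-a)_+^{2m}$, whose presence forces a slightly different test function and is what allows us to trade the $L^\infty$-control used in Theorem \ref{inftythm} for an $L^{2m}$-control through the Gagliardo--Nirenberg inequality \eqref{2.1GP}.

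\textbf{A priori estimate.} Writing the Euler--Lagrange system for $\mathcal{F}_m$, I would test it with $\psi=\tau_{s,-h}(\eta^2\tau_{s,h}v)$ for a cut-off $\eta$ between concentric balls $B_{\tilde s}\subset B_t\Subset B_{\lambda r}\subset B_R$, as in Theorem \ref{CGPThm1}. This produces the same four terms $I_1,I_2,I_3,I_4$ plus one new term coming from the penalization of the form
\begin{equation*}
I_5=\int_\Omega\bigl\langle\tau_{s,h}\bigl[2m(|v|-a)_+^{2m-1}\tfrac{v}{|v|}\bigr],\eta^2\tau_{s,h}v\bigr\rangle\,dx.
\end{equation*}
Lemma \ref{Lemma8} (with $\delta=a$) gives $I_5\ge 0$, so it may simply be dropped from the left-hand side; the structural term $I_1$ is still bounded below using \eqref{F2} as in \eqref{I_1}. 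The terms $I_2,I_3$ are treated exactly as in \eqref{I_2}--\eqref{I_3}, except that the Hölder exponent used against $g$ must now be chosen so that the remaining factor lives in the only Lebesgue space the Gagliardo--Nirenberg estimate provides for $Dv$, namely $L^{m(p+2)/(m+1)}$. A short calculation shows that the complementary exponent for $g^2(\mu^2+|Dv|^2)^{p/2}$ is exactly $m(p+2)/(2m-p)$, which is why the hypothesis $g\in L^{2m(p+2)/(2m-p)}$ is sharp here. The term $I_4$ is handled in the same way, splitting $f\cdot\tau_{-h}(\eta^2\tau_h v)$ into a $J_1,J_2$ pair and applying Hölder with exponents matched to $f\in L^{2m(p+2)/(2mp+p-2)}$.

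\textbf{Gagliardo--Nirenberg and absorption.} After the above, the estimate contains on its right-hand side a term of the form $\int_{B_{\lambda r}}(\mu^2+|Dv|^2)^{m(p+2)/2(m+1)}$ with a small coefficient. I would control this by applying \eqref{2.1GP} of Lemma \ref{lemma5GP} with a cut-off $\phi$ supported in $B_{4R}$ equal to one on $B_{\lambda r}$, which produces the bound
\begin{equation*}
\Bigl(\int_{B_{4R}}|v|^{2m}\Bigr)^{\!\frac{1}{m+1}}\!\Bigl[\int_{B_{4R}}(\mu^2+|Dv|^2)^{p/2}+\int_{B_{\lambda r}}|DV_p(Dv)|^2\Bigr]^{\!\frac{m}{m+1}},
\end{equation*}
after using Remark \ref{rmk1}. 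Young's inequality splits this into a product $\|v\|_{L^{2m}}^{2m/(m+1)}\|Dv\|_{L^p}^{p}$ plus a small multiple of $\int_{B_{\lambda r}}|DV_p(Dv)|^2$, which is absorbed on the left once we reinsert this into the energy inequality. The resulting inequality on concentric radii is tailor-made for Lemma \ref{iter}, which yields the a priori version of \eqref{approxmestimate}. The exponent $(p+2)/p$ in the power of $R$ should emerge from the worst scaling among the $(t-\tilde s)^{-\gamma}$ factors produced by $I_3$, $I_4$ and the Gagliardo--Nirenberg step.

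\textbf{Approximation.} To remove the a priori assumption, I would follow Step 2 of the proof of Theorem \ref{CGPThm1} verbatim, replacing $F$, $f$, $g$ by the mollifications $F_\varepsilon$, $f_\varepsilon$, $g_\varepsilon$ defined in \eqref{Fepsdef}, \eqref{fepsdef}, \eqref{gepsdef}, and keeping the penalization $(|w|-a)^{2m}_+$ unchanged (it is already smooth and convex). The minimizer $v_\varepsilon\in v+W^{1,p}_0(B_{\tilde R})$ of the regularized functional satisfies $V_p(Dv_\varepsilon)\in W^{1,2}_{\loc}$ by standard results for smooth coefficients, so the a priori estimate applies. Uniform $L^p$-bounds on $Dv_\varepsilon$ come from testing against $v$ and using the strong convergence of $f_\varepsilon$; convergence of $v_\varepsilon$ to $v$ is obtained from Fatou's lemma plus strict convexity of $\mathcal{F}_m$, and then Fatou once more in \eqref{approxmestimate} transfers the bound to $v$.

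\textbf{Main obstacle.} The delicate point is the bookkeeping on Hölder and Young exponents so that, after one invokes the Gagliardo--Nirenberg inequality \eqref{2.1GP}, the only gradient integral that remains unabsorbed is exactly the one appearing in the statement, namely $\|v\|_{L^{2m}(B_{4R})}^{2m/(m+1)}\|Dv\|_{L^p(B_{4R})}^{p\cdot m/(m+1)}$, and all other gradient integrals with supercritical exponents are either absorbed into $\int|DV_p(Dv)|^2$ or dominated by the $f$- and $g$-integrals. Keeping track of the different enlarged balls produced by the successive applications of Lemma \ref{le1} and Lemma \ref{lemma5GP} is what forces the $B_{4R}$ scale in the statement.
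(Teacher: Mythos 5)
Your proposal follows the same two-step template as the paper's proof and hits all the key structural points: the same test function $\tau_{s,-h}(\eta^2\tau_{s,h}v)$, dropping the penalization term via Lemma \ref{Lemma8}, matching the Hölder exponents on the $g$-term so that the complementary factor lives in $L^{m(p+2)/(m+1)}$ (your computation of $m(p+2)/(2m-p)$ is exactly what appears), the $J_1,J_2$ splitting for $f$, the Gagliardo--Nirenberg inequality \eqref{2.1GP} to trade $L^\infty$ for $L^{2m}$, iteration via Lemma \ref{iter}, and the mollification/Fatou approximation. This is essentially the paper's proof.

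One small imprecision worth flagging: your statement that ``Young's inequality splits this into a product $\|v\|_{L^{2m}}^{2m/(m+1)}\|Dv\|_{L^p}^{p}$ plus a small multiple of $\int|DV_p(Dv)|^2$'' does not quite work as written — a direct Young split of $A^{1/(m+1)}B^{m/(m+1)}$ with $A=\int|v|^{2m}$ and $B\supset\int|DV_p(Dv)|^2$ would either leave the $|DV_p(Dv)|^2$-term non-absorbable or destroy the product form that appears in \eqref{approxmestimate}. The paper instead keeps the $(\int|v|^{2m})^{1/(m+1)}$ factor intact, uses the elementary inequality $b^{m/(m+1)}\le b+1$ only on the $\int|DV_p(Dv)|^2$ part, and then absorbs by choosing the earlier Young parameter $\sigma$ so that $c\,\sigma\,(\int_{B_{4R}}|v|^{2m})^{1/(m+1)}<\frac12$. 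The idea you describe is morally correct, but the precise bookkeeping to keep the $(\int|v|^{2m})^{1/(m+1)}\cdot(\int(\mu^2+|Dv|^2)^{p/2})^{m/(m+1)}$ term in product form and still absorb $\int|DV_p(Dv)|^2$ requires the $\sigma$-dependent absorption rather than Young on the product.
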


For further needs, we notice that
	$$
	\frac{2m\left(p+2\right)}{2mp+p-2}>\frac{m\left(p+2\right)}{mp+m-1}
	$$
	for any $m>1$ as long as $1<p<2$, since it is equivalent to
	$$
	2mp+2m-2>2mp+p-2
	$$
	i.e.
	$$
	2m>p.
	$$
	Let us also notice that
	$$
	\frac{2m\left(p+2\right)}{2mp+p-2}>\frac{p+2}{p}, 
	$$
	and
	$$
	\frac{2m\left(p+2\right)}{2m-p}>p+2
	$$
	for any $m>1$ and $p\in\left(1, 2\right)$.\\
\begin{proof}[Proof of Thorem \ref{approxmthm}]
	{\bf Step 1: the a priori estimate.}\\
	Our first step consists in proving that, if $v\in W^{1, p}_{\loc}\left(\Omega, \R^N\right)\cap L^{2m}_{\loc}\left(\Omega, \R^N\right)$ is a local minimizer of $\mathcal{F}_m$ such that
	$$
	V_p\left(Dv\right)\in W^{1, 2}_{\loc}\left(\Omega\right),
	$$
	estimate \eqref{approxmestimate} holds.
	
	Since $v\in W^{1, p}_{\loc}\left(\Omega, \R^N\right)\cap L^{2m}_{\loc}\left(\Omega, \R^N\right)$ is a local minimizer of $\mathcal{F}_m$, it is a weak solution of the corresponding Euler-Lagrange system, that is, for any $\psi\in C^{\infty}_{0}\left(\Omega,\R^{N}\right)$, we have
	\begin{equation}\label{ELm}
		\int_{\Omega}\langle D_\xi F\left(x,Dv(x)\right),D\psi(x)\rangle dx=\int_{\Omega}\left[f(x)-2m\left(\left|v(x)\right|-a\right)^{2m-1}_+\cdot\frac{v(x)}{\lAbs v(x)\rAbs}\right]\psi(x).
	\end{equation}
Let us fix a ball $B_{4R}\Subset \Omega$ and arbitrary radii $\frac{R}{2}\le r<\tilde{s}<t<\tilde{t}<\lambda r<R,$ with $1<\lambda<2$. Let us consider a cut off function $\eta\in C^\infty_0\left(B_t\right)$ such that $\eta\equiv 1$ on
	$B_{\tilde{s}}$, $\left|D \eta\right|\le \frac{c}{t-\tilde{s}}$ and $\left|D^2 \eta\right|\le \frac{c}{\left(t-\tilde{s}\right)^2}$. From now on, with no	loss of generality, we suppose $R<\frac{1}{4}$.
For $\left|h\right|$ sufficiently small, we can choose, for any $s=1, ..., n$

$$\psi=\tau_{s, -h}\left(\eta^2\tau_{s, h}v\right)$$

as a test function for the equation \eqref{ELm}, and recalling Proposition \ref{findiffpr}, we get
\begin{eqnarray*}
	&&\int_\Omega \left<\tau_{s, h}D_\xi F\left(x, Dv(x)\right), D\left(\eta^2(x)\tau_{s, h}v(x)\right)\right>dx\cr\cr
	&=&\int_\Omega f(x)\cdot\tau_{s, -h}\left(\eta^2(x)\tau_{s, h}v(x)\right)dx\cr\cr
	&&-2m\int_\Omega\tau_{s, h}\left[\left(\left|v(x)\right|-a\right)^{2m-1}_+\cdot\frac{v(x)}{\left|v(x)\right|}\right]\cdot\eta^2(x)\tau_{s, h}v(x)dx, 
\end{eqnarray*}
that is 
\begin{eqnarray*}
	I_1+I_2&:=&\int_\Omega \left<D_\xi F\left(x+he_s, Dv\left(x+he_s\right)\right)-D_\xi F\left(x+he_s, Dv(x)\right), \eta^2(x)\tau_{s, h}Dv(x)\right>dx\cr\cr
	&&+2m\int_\Omega\tau_{s, h}\left[\left(\left|v(x)\right|-a\right)^{2m-1}_+\cdot\frac{v(x)}{\left|v(x)\right|}\right]\cdot\eta^2(x)\tau_{s, h}v(x)dx\cr\cr
	&=&-\int_\Omega \left<D_\xi F\left(x+he_s, Dv(x)\right)-D_\xi F\left(x, Dv(x)\right), \eta^2(x)\tau_{s, h}Dv(x)\right>dx\cr\cr
	&&-2\int_\Omega \left<\tau_{s, h}\left[D_\xi F\left(x, Dv(x)\right)\right], \eta(x)D\eta(x)\otimes\tau_{s, h}v(x)\right>dx\cr\cr
	&&+\int_\Omega f(x)\cdot\tau_{s, -h}\left(\eta^2(x)\tau_{s, h}v(x)\right)dx\cr\cr
	&=:&-I_3-I_4+I_5. 
\end{eqnarray*}

So we have

\begin{equation}\label{fullestimatem*}
	I_1+I_2\le\left|I_3\right|+\left|I_4\right|+\left|I_5\right|.
\end{equation}

By virtue of Lemma \ref{Lemma8}, we have

\begin{equation*}\label{I_2m}
	I_2\ge c(m)\int_\Omega\eta^2(x)\left|\left(\left|v(x+he_s)\right|-a\right)^{m}_+\cdot\frac{v\left(x+he_s\right)}{\left|v\left(x+he_s\right)\right|}-\left(\left|v(x)\right|-a\right)^{m}_+\cdot\frac{v(x)}{\left|v(x)\right|}\right|^2dx\ge0, 
\end{equation*}

so \eqref{fullestimatem*} becomes

\begin{equation}\label{fullestimatem}
	I_1\le\left|I_3\right|+\left|I_4\right|+\left|I_5\right|.
\end{equation}

By assumption \eqref{F2}, we get

\begin{equation}\label{I_1m}
	I_1\ge\nu\int_\Omega\eta^2(x)\left(\mu^2+\left|Dv(x)\right|^2+\left|Dv\left(x+he_s\right)\right|^2\right)^\frac{p-2}{2}\left|\tau_{s, h}Dv(x)\right|^2dx.
\end{equation}

For what concerns the term $I_3$, by \eqref{F4} and using Young's Inequality with exponents $\left(2, 2\right)$, for any $\varepsilon>0$, we have

\begin{eqnarray*}\label{I_3m*}
	\left|I_3\right|&\le&\left|h\right|\int_\Omega\eta^2(x)\left(g(x)+g\left(x+he_s	\right)\right)\left(\mu^2+\left|Dv(x)\right|^2+\left|Dv\left(x+he_s\right)\right|^2\right)^\frac{p-1}{2}\left|\tau_{s, h}Dv(x)\right|dx\cr\cr
	&\le&\varepsilon\int_\Omega\eta^2(x)\left(\mu^2+\left|Dv(x)\right|^2+\left|Dv\left(x+he_s\right)\right|^2\right)^\frac{p-2}{2}\left|\tau_{s, h}Dv(x)\right|^2dx\cr\cr
	&&+c_\varepsilon\left|h\right|^2\int_{\Omega}\eta^2(x)\left(g(x)+g\left(x+he_s\right)\right)^2\left(\mu^2+\left|Dv(x)\right|^2+\left|Dv\left(x+he_s\right)\right|^2\right)^\frac{p}{2}dx.
\end{eqnarray*}

By H\"older's inequality with exponents $\left(\frac{m\left(p+2\right)}{p\left(m+1\right)}, \frac{m\left(p+2\right)}{2m-p}\right)$, the properties of $\eta$ and Lemma \ref{le1}, we get

\begin{eqnarray}\label{I_3m}
	\left|I_3\right|&\le&\varepsilon\int_\Omega\eta^2(x)\left(\mu^2+\left|Dv(x)\right|^2+\left|Dv\left(x+he_s\right)\right|^2\right)^\frac{p-2}{2}\left|\tau_{s, h}Dv(x)\right|^2dx\cr\cr
	&&+c_\varepsilon\left|h\right|^2\left(\int_{B_t}\left(\mu^2+\left|Dv(x)\right|^2+\left|Dv\left(x+he_s\right)\right|^2\right)^\frac{m\left(p+2\right)}{2\left(m+1\right)}dx\right)^\frac{m\left(p+2\right)}{m+1}\cr\cr
	&&\cdot\left(\int_{B_t}\left(g(x)+g\left(x+he_s\right)\right)^{\frac{2m\left(p+2\right)}{2m-p}}dx\right)^\frac{2m-p}{m\left(p+2\right)}\cr\cr
	&\le&\varepsilon\int_\Omega\eta^2(x)\left(\mu^2+\left|Dv(x)\right|^2+\left|Dv\left(x+he_s\right)\right|^2\right)^\frac{p-2}{2}\left|\tau_{s, h}Dv(x)\right|^2dx\cr\cr
	&&+c_\varepsilon\left|h\right|^2\left(\int_{B_{\tilde{t}}}\left(\mu^2+\left|Dv(x)\right|^2\right)^\frac{m\left(p+2\right)}{2\left(m+1\right)}dx\right)^\frac{p\left(m+1\right)}{m\left(p+2\right)}\cdot\left(\int_{B_{\lambda r}}g^{\frac{2m\left(p+2\right)}{2m-p}}(x)dx\right)^\frac{2m-p}{m\left(p+2\right)}.
\end{eqnarray}

Let us consider, now, the term $I_4$.
We have 

\begin{eqnarray*}
	I_4&=&2\int_\Omega \left<\tau_{s, h}\left[D_\xi F\left(x, Dv(x)\right)\right], \eta(x)D\eta(x)\otimes\tau_{s, h}v(x)\right>dx\cr\cr
	&=&2\int_\Omega \left<D_\xi F\left(x, Dv(x)\right), \tau_{s, -h}\left[\eta(x)D\eta(x)\otimes\tau_{s, h}v(x)\right]\right>dx,
\end{eqnarray*}

so, by \eqref{F1}, we get

\begin{eqnarray*}\label{I_4m*}
	\left|I_4\right|&\le&c\int_\Omega \left(\mu^2+\left|Dv(x)\right|^2\right)^\frac{p-1}{2}\left|\tau_{s, -h}\left[\eta(x)D\eta(x)\otimes\tau_{s, h}v(x)\right]\right|dx.
\end{eqnarray*}

We can treat this term as we did after \eqref{I_3*} in the proof of Theorem \ref{CGPThm1}, using \eqref{tau1} with $v$ in place of $u$, thus getting

\begin{equation}\label{I_4m}
	\left|I_4\right|\le\sigma\int_{B_{\tilde{t}}}\left|\tau_{s, h}V_p\left(Dv(x)\right)\right|^2dx+\frac{c_\sigma\left|h\right|^2}{\left(t-\tilde{s}\right)^2}\int_{B_{R}} \left(\mu^2+\left|Dv\left(x\right)\right|^2\right)^\frac{p}{2}dx,
\end{equation}
for any $\sigma>0$.\\

In order to estimate the term $I_5$, arguing as we did in \eqref{I_4'}, we have

\begin{eqnarray*}
	I_5&=&\int_\Omega\eta^2\left(x\right)f(x)\tau_{s, -h}\left(\tau_{s, h}v(x)\right)dx\cr\cr
	&&+\int_\Omega\left[\eta\left(x-he_s\right)+\eta(x)\right]f(x)\tau_{s, -h}\eta(x)\tau_{s, h}v\left(x-he_s\right)dx\cr\cr
	&=:&J_1+J_2,
\end{eqnarray*}

which implies 

\begin{equation}\label{I_5m*}
	\left|I_5\right|\le\left|J_1\right|+\left|J_2\right|
\end{equation}

Let us consider the term $J_1$. By virtue of the properties of $\eta$ and using H\"older's inequality with exponents $\left(\frac{2m\left(p+2\right)}{2mp+p-2}, \frac{2m\left(p+2\right)}{4m+2-p}\right)$, we have

\begin{eqnarray}\label{J_1m*}
	\left|J_1\right|&\le&\int_{B_t}\left|f(x)\right|\left|\tau_{s, -h}\left(\tau_{s, h}v(x)\right)\right|dx\cr\cr
	&\le&\left(\int_{B_t}\left|f(x)\right|^\frac{2m\left(p+2\right)}{2mp+p-2}dx\right)^\frac{2mp+p-2}{2m\left(p+2\right)}\cdot\left(\int_{B_t}\left|\tau_{s, -h}\left(\tau_{s, h}v(x)\right)\right|^\frac{2m\left(p+2\right)}{4m+2-p}dx\right)^\frac{4m+2-p}{2m\left(p+2\right)}\cr\cr
	&\le&\left|h\right|\left(\int_{B_t}\left|f(x)\right|^\frac{2m\left(p+2\right)}{2mp+p-2}dx\right)^\frac{2mp+p-2}{2m\left(p+2\right)}\cdot\left(\int_{B_{\tilde{t}}}\left|\tau_{s, h}Dv(x)\right|^\frac{2m\left(p+2\right)}{4m+2-p}dx\right)^\frac{4m+2-p}{2m\left(p+2\right)}, 
\end{eqnarray}

where, in the last line we applied Lemma \ref{le1} since, by the a priori assumption $V_p\left(Dv\right)\in W^{1,2}_\loc\left(\Omega\right)$ and Remark \ref{rmk3}, we have $Dv\in L^{\frac{m\left(p+2\right)}{m+1}}_\loc\left(\Omega\right)$, which implies $Dv\in L^{\frac{2m\left(p+2\right)}{4m+2-p}}_\loc\left(\Omega\right)$ since, for any $m>1$ and $1<p<2$, we have $\frac{2m\left(p+2\right)}{4m+2-p}<\frac{m\left(p+2\right)}{m+1}$.\\
Let us consider the second integral in \eqref{J_1m*}. By virtue of \eqref{lemma6GPestimate1}, and using H\"older's inequality with exponents $\left(\frac{4m+2-p}{m\left(p+2\right)}, \frac{4m+2-p}{\left(2-p\right)\left(m+1\right)}\right)$, we have

\begin{eqnarray}\label{tauVpm}
	\int_{B_{\tilde{t}}}\left|\tau_{s, h}Dv(x)\right|^\frac{2m\left(p+2\right)}{4m+2-p}dx&\le&\int_{B_{\tilde{t}}}\left(\mu^2+\left|Dv(x)\right|^2+\left|Dv\left(x+he_s\right)\right|^2\right)^{\frac{2-p}{4}\cdot\frac{2m\left(p+2\right)}{4m+2-p}}\cr\cr
	&&\cdot\left|\tau_{s, h}V_p\left(Dv(x)\right)\right|^{\frac{2m\left(p+2\right)}{4m+2-p}}dx\cr\cr
	&\le&\left(\int_{B_{\tilde{t}}}\left(\mu^2+\left|Dv(x)\right|^2+\left|Dv\left(x+he_s\right)\right|^2\right)^{\frac{m\left(p+2\right)}{2\left(m+1\right)}}dx\right)^\frac{\left(2-p\right)\left(m+1\right)}{4m+2-p}\cr\cr
	&&\cdot\left(\int_{B_{\tilde{t}}}\left|\tau_{s, h}V_p\left(Dv(x)\right)\right|^{2}dx\right)^\frac{m\left(p+2\right)}{4m+2-p}.
\end{eqnarray}

Inserting \eqref{tauVpm} into \eqref{J_1m*}, we get

\begin{eqnarray}\label{J_1m**}
	\left|J_1\right|&\le&\left|h\right|\left(\int_{B_t}\left|f(x)\right|^\frac{2m\left(p+2\right)}{2mp+p-2}dx\right)^\frac{2mp+p-2}{2m\left(p+2\right)}\cr\cr
	&&\cdot\left(\int_{B_{\tilde{t}}}\left(\mu^2+\left|Dv(x)\right|^2+\left|Dv\left(x+he_s\right)\right|^2\right)^{\frac{m\left(p+2\right)}{2\left(m+1\right)}}dx\right)^\frac{\left(2-p\right)\left(m+1\right)}{2m\left(p+2\right)}\cr\cr
	&&\cdot\left(\int_{B_{\tilde{t}}}\left|\tau_{s, h}V_p\left(Dv(x)\right)\right|^{2}dx\right)^\frac{1}{2}, 
\end{eqnarray}

Using Lemma \ref{le1} and Young's Inequality with exponents $\left(\frac{2m\left(p+2\right)}{2mp+p-2}, \frac{2m\left(p+2\right)}{\left(2-p\right)\left(m+1\right)}, 2\right)$, we get 

\begin{eqnarray}\label{J_1m}
	\left|J_1\right|&\le&c_\sigma\left|h\right|^2\int_{B_t}\left|f(x)\right|^\frac{2m\left(p+2\right)}{2mp+p-2}dx+\sigma\left|h\right|^2\int_{B_{\lambda r}}\left(\mu^2+\left|Dv(x)\right|^2\right)^{\frac{m\left(p+2\right)}{2\left(m+1\right)}}dx\cr\cr
	&&+\sigma\int_{B_{\tilde{t}}}\left|\tau_{s, h}V_p\left(Dv(x)\right)\right|^{2}dx,
\end{eqnarray}

for any $\sigma>0$.\\
For what concerns the term $J_2$, by the properties of $\eta$, as in \eqref{J_2*}, we have

\begin{eqnarray*}\label{J_2m*}
	\left|J_2\right|&\le&\int_{B_t} \left|f(x)\right|\left|\tau_{s, -h}\eta(x)\right|\left|\tau_{s, h}v\left(x-he_s\right)\right|dx\cr\cr
	&\le&\frac{c\left|h\right|}{t-\tilde{s}}\int_{B_t} \left|f(x)\right|\left|\tau_{s, h}v\left(x-he_s\right)\right|dx.
\end{eqnarray*}

Now, if we apply H\"older's Inequality with exponents $\left(\frac{m\left(p+2\right)}{mp+m-1}, \frac{m\left(p+2\right)}{m+1}\right)$, we get

\begin{eqnarray}\label{J_2m}
	\left|J_2\right|&\le&\frac{c\left|h\right|}{t-\tilde{s}}\left(\int_{B_t} \left|f(x)\right|^\frac{m\left(p+2\right)}{mp+m-1}dx\right)^\frac{mp+m-1}{m\left(p+2\right)}\cr\cr
	&&\cdot\left(\int_{B_t}\left|\tau_{s, h}v\left(x-he_s\right)\right|^\frac{m\left(p+2\right)}{m+1}dx\right)^\frac{m+1}{m\left(p+2\right)}\cr\cr
	&\le&\frac{c\left|h\right|^2}{t-\tilde{s}}\left(\int_{B_t} \left|f(x)\right|^\frac{m\left(p+2\right)}{mp+m-1}dx\right)^\frac{mp+m-1}{m\left(p+2\right)}\cr\cr
	&&\cdot\left(\int_{B_{\lambda r}}\left|Dv\left(x\right)\right|^\frac{m\left(p+2\right)}{m+1}dx\right)^\frac{m+1}{m\left(p+2\right)},
\end{eqnarray} 
where we also used Lemma \ref{le1}, since $Dv\in L^{\frac{m\left(p+2\right)}{m+1}}_{\loc}\left(\Omega\right)$.\\
By virtue of \eqref{J_1m} and \eqref{J_2m}, \eqref{I_5m*} gives

\begin{eqnarray}\label{I_5m}
	\left|I_5\right|&\le&c_\sigma\left|h\right|^2\int_{B_t}\left|f(x)\right|^\frac{2m\left(p+2\right)}{2mp+p-2}dx+\sigma\left|h\right|^2\int_{B_{\lambda r}}\left(\mu^2+\left|Dv(x)\right|^2\right)^{\frac{m\left(p+2\right)}{2\left(m+1\right)}}dx\cr\cr
	&&+\sigma\int_{B_{\tilde{t}}}\left|\tau_{s, h}V_p\left(Dv(x)\right)\right|^{2}dx\cr\cr
	&&+\frac{c\left|h\right|^2}{t-\tilde{s}}\left(\int_{B_t} \left|f(x)\right|^\frac{m\left(p+2\right)}{mp+m-1}dx\right)^\frac{mp+m-1}{m\left(p+2\right)}\cr\cr
	&&\cdot\left(\int_{B_{\lambda r}}\left|Dv\left(x\right)\right|^\frac{m\left(p+2\right)}{m+1}dx\right)^\frac{m+1}{m\left(p+2\right)}\cr\cr
	&\le&2\sigma\left|h\right|^2\int_{B_{\lambda r}}\left(\mu^2+\left|Dv(x)\right|^2\right)^{\frac{m\left(p+2\right)}{2\left(m+1\right)}}dx+\sigma\int_{B_{\tilde{t}}}\left|\tau_{s, h}V_p\left(Dv(x)\right)\right|^{2}dx\cr\cr
	&&+c_\sigma\left|h\right|^2\int_{B_t}\left|f(x)\right|^\frac{2m\left(p+2\right)}{2mp+p-2}dx+\frac{c_\sigma\left|h\right|^2}{\left(t-\tilde{s}\right)^\frac{m\left(p+2\right)}{mp+m-1}}\int_{B_t} \left|f(x)\right|^\frac{m\left(p+2\right)}{mp+m-1}dx, 
\end{eqnarray}

where we also used Young's Inequality with exponents $\left(\frac{m\left(p+2\right)}{mp+m-1} ,\frac{m\left(p+2\right)}{m+1}\right)$.\\
Plugging \eqref{I_1m}, \eqref{I_3m}, \eqref{I_4m} and \eqref{I_5m} into \eqref{fullestimatem}, and choosing $\varepsilon<\frac{\nu}{2}$, we get 

\begin{eqnarray*}\label{fullestimatem2}
	&&\int_{\Omega}\eta^2(x)\left|\tau_{s, h}Dv\left(x\right)\right|^2\left(\mu^2+\left|Dv\left(x+he_s\right)\right|^2+\left|Dv\left(x\right)\right|^2\right)^\frac{p-2}{2}dx\cr\cr
	&\le&c\left|h\right|^2\left(\int_{B_{\tilde{t}}}\left(\mu^2+\left|Dv(x)\right|^2\right)^\frac{m\left(p+2\right)}{2\left(m+1\right)}dx\right)^\frac{p\left(m+1\right)}{m\left(p+2\right)}\cdot\left(\int_{B_{\lambda r}}g^{\frac{2m\left(p+2\right)}{2m-p}}(x)dx\right)^\frac{2m-p}{m\left(p+2\right)}\cr\cr
	&&+2\sigma\int_{B_{\tilde{t}}}\left|\tau_{s, h}V_p\left(Dv(x)\right)\right|^2dx+\frac{c_\sigma\left|h\right|^2}{\left(t-\tilde{s}\right)^2}\int_{B_{R}} \left(\mu^2+\left|Dv\left(x\right)\right|^2\right)^\frac{p}{2}dx\cr\cr
	&&+c_\sigma\left|h\right|^2\int_{B_t}\left|f(x)\right|^\frac{2m\left(p+2\right)}{2mp+p-2}dx+\frac{c_\sigma\left|h\right|^2}{\left(t-\tilde{s}\right)^\frac{m\left(p+2\right)}{mp+m-1}}\int_{B_t} \left|f(x)\right|^\frac{m\left(p+2\right)}{mp+m-1}dx\cr\cr
	&&+2\sigma\left|h\right|^2\int_{B_{\lambda r}}\left(\mu^2+\left|Dv(x)\right|^2\right)^{\frac{m\left(p+2\right)}{2\left(m+1\right)}}dx
\end{eqnarray*}

which, by virtue of Lemma \ref{lemma6GP}, and using Young's inequality with exponents $\left(\frac{m\left(p+2\right)}{p\left(m+1\right)}, \frac{m\left(p+2\right)}{2m-p}\right)$ implies

\begin{eqnarray}\label{tauV_pm*}
	&&\int_{\Omega}\eta^2(x)\left|\tau_{s, h}Dv\left(x\right)\right|^2\left(\mu^2+\left|Dv\left(x+he_s\right)\right|^2+\left|Dv\left(x\right)\right|^2\right)^\frac{p-2}{2}dx\cr\cr
	&\le&2\sigma\int_{B_{\tilde{t}}}\left|\tau_{s, h}V_p\left(Dv(x)\right)\right|^2dx+3\sigma\left|h\right|^2\int_{B_{\lambda r}}\left(\mu^2+\left|Dv(x)\right|^2\right)^{\frac{m\left(p+2\right)}{2\left(m+1\right)}}dx\cr\cr
	&&+\frac{c_\sigma\left|h\right|^2}{\left(t-\tilde{s}\right)^2}\int_{B_{R}} \left(\mu^2+\left|Dv\left(x\right)\right|^2\right)^\frac{p}{2}dx+c_\sigma\left|h\right|^2\int_{B_{\lambda r}}g^{\frac{2m\left(p+2\right)}{2m-p}}(x)dx\cr\cr
	&&+c_\sigma\left|h\right|^2\int_{B_t}\left|f(x)\right|^\frac{2m\left(p+2\right)}{2mp+p-2}dx+\frac{c_\sigma\left|h\right|^2}{\left(t-\tilde{s}\right)^\frac{m\left(p+2\right)}{mp+m-1}}\int_{B_t} \left|f(x)\right|^\frac{m\left(p+2\right)}{mp+m-1}dx.
\end{eqnarray}
Applying Lemma \ref{le1}, \eqref{tauV_pm*} becomes

\begin{eqnarray}\label{tauV_pm**}
	&&\int_{\Omega}\eta^2(x)\left|\tau_{s, h}V_p\left(Dv(x)\right)\right|^2dx\cr\cr
	&\le&3\sigma\left|h\right|^2\int_{B_{\lambda r}}\left(\mu^2+\left|Dv(x)\right|^2\right)^\frac{m\left(p+2\right)}{2\left(m+1\right)}dx+c\cdot\sigma\left|h\right|^2\int_{B_{\lambda r}}\left|DV_p\left(Dv(x)\right)\right|^2dx\cr\cr
	&&+\frac{c_\sigma\left|h\right|^2}{\left(t-\tilde{s}\right)^2}\int_{B_{R}} \left(\mu^2+\left|Dv\left(x\right)\right|^2\right)^\frac{p}{2}dx+c_\sigma\left|h\right|^2\int_{B_{\lambda r}}g^{\frac{2m\left(p+2\right)}{2m-p}}(x)dx\cr\cr
	&&+c_\sigma\left|h\right|^2\int_{B_t}\left|f(x)\right|^\frac{2m\left(p+2\right)}{2mp+p-2}dx+\frac{c_\sigma\left|h\right|^2}{\left(t-\tilde{s}\right)^\frac{m\left(p+2\right)}{mp+m-1}}\int_{B_t} \left|f(x)\right|^\frac{m\left(p+2\right)}{mp+m-1}dx.
\end{eqnarray}
Let us observe that, for any $m>1$ and $1<p<2$, we have
$$
\frac{m\left(p+2\right)}{mp+m-1}\le\frac{p+2}{p}, 
$$
hence
$$
\max\Set{2, \frac{m\left(p+2\right)}{mp+m-1}}\le\max\Set{2, \frac{p+2}{p}}=\frac{p+2}{p}. 
$$
Hence, since $t-\tilde{s}<1$, by \eqref{tauV_pm**} we deduce
\begin{eqnarray}\label{tauV_pm'}
	&&\int_{\Omega}\eta^2(x)\left|\tau_{s, h}V_p\left(Dv(x)\right)\right|^2dx\cr\cr
	&\le&3\sigma\left|h\right|^2\int_{B_{\lambda r}}\left(\mu^2+\left|Dv(x)\right|^2\right)^\frac{m\left(p+2\right)}{2\left(m+1\right)}dx+c\cdot\sigma\left|h\right|^2\int_{B_{\lambda r}}\left|DV_p\left(Dv(x)\right)\right|^2dx\cr\cr
	&&+\frac{c_\sigma\left|h\right|^2}{\left(t-\tilde{s}\right)^{\frac{p+2}{p}}}\left[\int_{B_{R}}g^{\frac{2m\left(p+2\right)}{2m-p}}(x)dx+\int_{B_{R}} \left(\mu^2+\left|Dv\left(x\right)\right|^2\right)^\frac{p}{2}dx\right.\cr\cr
	&&\left.+\int_{B_R}\left|f(x)\right|^\frac{2m\left(p+2\right)}{2mp+p-2}dx+\int_{B_R} \left|f(x)\right|^\frac{m\left(p+2\right)}{mp+m-1}dx\right].
\end{eqnarray}

Let us notice that, since $\frac{m\left(p+2\right)}{mp+m-1}<\frac{2m\left(p+2\right)}{2mp+p-2}$, we have $L^{\frac{2m\left(p+2\right)}{2mp+p-2}}_\loc\left(\Omega\right)\hookrightarrow L^{\frac{m\left(p+2\right)}{mp+m-1}}_\loc\left(\Omega\right)$, and using Young's inequality with exponents  $\left(\frac{2\left(mp+m-1\right)}{2mp+p-2},  \frac{2\left(mp+m-1\right)}{2m-p}\right)$, we have

\begin{equation}\label{Immersion}
	\int_{B_{R}} \left|f(x)\right|^\frac{m\left(p+2\right)}{mp+m-1}dx\le c\left|B_R\right|+c\int_{B_{R}} \left|f(x)\right|^{\frac{2m\left(p+2\right)}{2mp+p-2}}dx.
\end{equation}

So,  plugging \eqref{Immersion} into \eqref{tauV_pm'}, we get

\begin{eqnarray}\label{tauV_pm}
	&&\int_{\Omega}\eta^2(x)\left|\tau_{s, h}V_p\left(Dv(x)\right)\right|^2dx\cr\cr
	&\le&3\sigma\left|h\right|^2\int_{B_{\lambda r}}\left(\mu^2+\left|Dv(x)\right|^2\right)^\frac{m\left(p+2\right)}{2\left(m+1\right)}dx+c\cdot\sigma\left|h\right|^2\int_{B_{\lambda r}}\left|DV_p\left(Dv(x)\right)\right|^2dx\cr\cr
	&&+\frac{c_\sigma\left|h\right|^2}{\left(t-\tilde{s}\right)^{\frac{p+2}{p}}}\left[\int_{B_{R}}g^{\frac{2m\left(p+2\right)}{2m-p}}(x)dx+\int_{B_{R}} \left(\mu^2+\left|Dv\left(x\right)\right|^2\right)^\frac{p}{2}dx\right.\cr\cr
	&&\left.+\int_{B_R}\left|f(x)\right|^\frac{2m\left(p+2\right)}{2mp+p-2}dx+\left|B_R\right|\right].
\end{eqnarray}

Since, by our a priori assumption, $V_p\left(Dv\right)\in W^{1, 2}_\loc\left(\Omega\right)$, and \eqref{tauV_pm} holds for any $s=1, ..., n$, Lemma \ref{Giusti8.2} implies

\begin{eqnarray*}
	&&\int_{\Omega}\eta^2(x)\left|DV_p\left(Dv(x)\right)\right|^2dx\cr\cr
	&\le&3\sigma\int_{B_{\lambda r}}\left(\mu^2+\left|Dv(x)\right|^2\right)^\frac{m\left(p+2\right)}{2\left(m+1\right)}dx+c\cdot\sigma\int_{B_{\lambda r}}\left|DV_p\left(Dv(x)\right)\right|^2dx\cr\cr
	&&+\frac{c_\sigma}{\left(t-\tilde{s}\right)^{\frac{p+2}{p}}}\left[\int_{B_{R}}g^{\frac{2m\left(p+2\right)}{2m-p}}(x)dx+\int_{B_{R}} \left(\mu^2+\left|Dv\left(x\right)\right|^2\right)^\frac{p}{2}dx\right.\cr\cr
	&&\left.+\int_{B_R}\left|f(x)\right|^\frac{2m\left(p+2\right)}{2mp+p-2}dx+\left|B_R\right|\right], 
\end{eqnarray*}

and by the properties of $\eta$, we get

\begin{eqnarray}\label{DV_pm**}
	&&\int_{B_{\tilde{s}}}\left|DV_p\left(Dv(x)\right)\right|^2dx\cr\cr
	&\le&c\cdot\sigma\int_{B_{\lambda r}}\left|DV_p\left(Dv(x)\right)\right|^2dx\cr\cr
	&&+\frac{c_\sigma}{\left(t-\tilde{s}\right)^{\frac{p+2}{p}}}\left[\int_{B_{R}}g^{\frac{2m\left(p+2\right)}{2m-p}}(x)dx+\int_{B_{R}} \left(\mu^2+\left|Dv\left(x\right)\right|^2\right)^\frac{p}{2}dx\right.\cr\cr
	&&\left.+\int_{B_R}\left|f(x)\right|^\frac{2m\left(p+2\right)}{2mp+p-2}dx+\left|B_R\right|\right]\cr\cr
	&&+3\sigma\int_{B_{\lambda r}}\left(\mu^2+\left|Dv(x)\right|^2\right)^\frac{m\left(p+2\right)}{2\left(m+1\right)}dx, 
\end{eqnarray}

Let us remind that, since we choosed $B_{4R}\Subset\Omega$, $\frac{R}{2}<r<\tilde{s}<t<\tilde{t}<\lambda r<R$, with $1<\lambda<2$ and $R<\frac{1}{4}$, we also have $\lambda r<\lambda\tilde{s}<\lambda t<\lambda^2 r<4r<4R<1$.\\
Choosing a cut-off function $\phi\in C^\infty_{0}\left(B_{\lambda t}\right)$ such that $0\le\phi\le1$, $\phi\equiv 1$ on $B_{\lambda \tilde{s}}$ and $\left|D\phi\right|\le\frac{c}{\lambda\left(t-\tilde{s}\right)}$, we have

\begin{equation*}\label{Iterationm2}
	\int_{B_{\lambda r}}\left(\mu^2+\left|Dv(x)\right|^2\right)^\frac{m\left(p+2\right)}{2\left(m+1\right)}dx\le\left|B_R\right|+\int_{B_{\lambda t}}\phi^{\frac{m}{m+1}\left(p+2\right)}(x)\left|Dv(x)\right|^{\frac{m}{m+1}\left(p+2\right)}dx.
\end{equation*} 

where we also used that $\mu\in[0, 1]$ and $\lambda r<R$.
Therefore, applying \eqref{2.1GP}, we get 

\begin{eqnarray}\label{Iterationm2*}
	&&\int_{B_{\lambda r}}\left(\mu^2+\left|Dv(x)\right|^2\right)^\frac{m\left(p+2\right)}{2\left(m+1\right)}dx\cr\cr
	&\le&(p+2)^2\left(\int_{B_{\lambda t}}\phi^{\frac{m}{m+1}(p+2)}\left|v(x)\right|^{2m}dx\right)^\frac{1}{m+1}\cr\cr
	&&\cdot\left[\left(\int_{B_{\lambda t}}\phi^{\frac{m}{m+1}(p+2)}\left|D\phi\right|^2\left(\mu^2+\left|Dv(x)\right|^2\right)^\frac{p}{2}dx\right)^\frac{m}{m+1}\right.\cr\cr
	&&\left.+n\left(\int_{B_{\lambda t}}\phi^{\frac{m}{m+1}(p+2)}\left(\mu^2+\left|Dv(x)\right|^2\right)^\frac{p-2}{2}\left|D^2v(x)\right|^2dx\right)^\frac{m}{m+1}\right]+\left|B_R\right|\cr\cr
	&\le&c\left(n, p\right)\left(\int_{B_{4R}}\left|v(x)\right|^{2m}dx\right)^\frac{1}{m+1}\cr\cr
	&&\cdot\left[\frac{1}{\lambda^2\left(t-\tilde{s}\right)^2}\left(\int_{B_{4R}}\left(\mu^2+\left|Dv(x)\right|^2\right)^\frac{p}{2}dx\right)^\frac{m}{m+1}\right.\cr\cr
	&&\left.+\left(\int_{B_{\lambda^2 r}}\left|DV_p\left(Dv(x)\right)\right|^2dx\right)^\frac{m}{m+1}\right]+\left|B_R\right|, 
\end{eqnarray} 
where we used the properties of $\phi$, \eqref{lemma6GPestimate2}, and the fact that $\lambda t<\lambda^2 r<4R$.\\
The elementary inequality
$$
b^\frac{m}{m+1}\le b+1,\qquad\mbox{ for any }m>1\mbox{ and }b\ge0,
$$
implies

\begin{equation}\label{DVp}
	\left(\int_{B_{\lambda^2 r}}\left|DV_p\left(Dv(x)\right)\right|^2dx\right)^\frac{m}{m+1}\le\int_{B_{\lambda^2 r}}\left|DV_p\left(Dv(x)\right)\right|^2dx+1.
\end{equation}

Now, if we recall that $1<\lambda<2$, $t-\tilde{s}<\lambda\left( t-\tilde{s}\right)<1$ and $\frac{p+2}{p}\ge2$, \eqref{Iterationm2*} and \eqref{DVp} imply

\begin{eqnarray*}
	&&\int_{B_{\tilde{s}}}\left|DV_p\left(Dv(x)\right)\right|^2dx\cr\cr
	&\le&c\cdot\sigma\left(\int_{B_{4 R}}\left|v(x)\right|^{2m}dx\right)^\frac{1}{m+1}\cdot\int_{B_{\lambda^2 r}}\left|DV_p\left(Dv(x)\right)\right|^2dx\cr\cr
	&&+\frac{\lambda^2+1}{\lambda^2}\cdot\frac{c_\sigma}{\left(t-\tilde{s}\right)^{\frac{p+2}{p}}}\left[\int_{B_{R}}g^{\frac{2m\left(p+2\right)}{2m-p}}(x)dx+\int_{B_{R}} \left(\mu^2+\left|Dv\left(x\right)\right|^2\right)^\frac{p}{2}dx\right.\cr\cr
	&&\left.+\left(\int_{B_{4 R}}\left|v(x)\right|^{2m}dx\right)^\frac{1}{m+1}\cdot\left(\int_{B_{4 R}}\left(\mu^2+\left|Dv(x)\right|^2\right)^\frac{p}{2}dx\right)^\frac{m}{m+1}\right.\cr\cr
	&&\left.+\int_{B_R}\left|f(x)\right|^\frac{2m\left(p+2\right)}{2mp+p-2}dx+\left|B_R\right|+1\right]\cr\cr
	&\le&c\cdot\sigma\left(\int_{B_{4 R}}\left|v(x)\right|^{2m}dx\right)^\frac{1}{m+1}\cdot\int_{B_{\lambda^2 r}}\left|DV_p\left(Dv(x)\right)\right|^2dx\cr\cr
	&&+\frac{c_\sigma}{\left(t-\tilde{s}\right)^{\frac{p+2}{p}}}\left[\int_{B_{R}}g^{\frac{2m\left(p+2\right)}{2m-p}}(x)dx+\int_{B_{R}} \left(\mu^2+\left|Dv\left(x\right)\right|^2\right)^\frac{p}{2}dx\right.\cr\cr
	&&\left.+\left(\int_{B_{4 R}}\left|v(x)\right|^{2m}dx\right)^\frac{1}{m+1}\cdot\left(\int_{B_{4 R}}\left(\mu^2+\left|Dv(x)\right|^2\right)^\frac{p}{2}dx\right)^\frac{m}{m+1}\right.\cr\cr
	&&\left.+\int_{B_R}\left|f(x)\right|^\frac{2m\left(p+2\right)}{2mp+p-2}dx+\left|B_R\right|+1\right],
\end{eqnarray*}

which, if we choose $\sigma>0$ such that

$$
c\cdot\sigma\left(\int_{B_{4 R}}\left|v(x)\right|^{2m}dx\right)^\frac{1}{m+1}<\frac{1}{2},
$$

becomes

\begin{eqnarray}\label{DV_pmb}
	\int_{B_{\tilde{s}}}\left|DV_p\left(Dv(x)\right)\right|^2dx&\le&\frac{1}{2}\int_{B_{\lambda^2 r}}\left|DV_p\left(Dv(x)\right)\right|^2dx\cr\cr
	&&+\frac{c}{\left(t-\tilde{s}\right)^{\frac{p+2}{p}}}\left[\int_{B_{R}}g^{\frac{2m\left(p+2\right)}{2m-p}}(x)dx+\int_{B_{R}} \left(\mu^2+\left|Dv\left(x\right)\right|^2\right)^\frac{p}{2}dx\right.\cr\cr
	&&\left.+\left(\int_{B_{4 R}}\left|v(x)\right|^{2m}dx\right)^\frac{1}{m+1}\cdot\left(\int_{B_{4 R}}\left(\mu^2+\left|Dv(x)\right|^2\right)^\frac{p}{2}dx\right)^\frac{m}{m+1}\right.\cr\cr
	&&\left.+\int_{B_R}\left|f(x)\right|^\frac{2m\left(p+2\right)}{2mp+p-2}dx+\left|B_R\right|+1\right], 
\end{eqnarray}

Since \eqref{DV_pmb} holds for any $\frac{R}{2}<r<\tilde{s}<t<\tilde{t}<\lambda r<R,$ with  $1<\lambda<2$, with a constant $c$ depending on $n, p, L, \nu, L_1, \ell$, but is independent of the radii, passing to the limit as $\tilde{s}\to r$ and $t\to\lambda r$, we get

\begin{eqnarray*}
\int_{B_{r}}\left|DV_p\left(Dv(x)\right)\right|^2dx&\le&\frac{1}{2}\int_{B_{\lambda^2 r}}\left|DV_p\left(Dv(x)\right)\right|^2dx\cr\cr
&&+\frac{c}{r^{\frac{p+2}{p}}\left(\lambda-1\right)^{\frac{p+2}{p}}}\left[\int_{B_{R}} \left(\mu^2+\left|Dv\left(x\right)\right|^2\right)^\frac{p}{2}dx\right.\cr\cr
&&\left.+\left(\int_{B_{4 R}}\left|v(x)\right|^{2m}dx\right)^\frac{1}{m+1}\cdot\left(\int_{B_{4 R}}\left(\mu^2+\left|Dv(x)\right|^2\right)^\frac{p}{2}dx\right)^\frac{m}{m+1}\right.\cr\cr
&&+\left.\int_{B_{R}}g^{\frac{2m\left(p+2\right)}{2m-p}}(x)dx+\int_{B_R}\left|f(x)\right|^\frac{2m\left(p+2\right)}{2mp+p-2}dx+\left|B_R\right|+1\right],  
\end{eqnarray*}

and since $1<\lambda<2$, we have

\begin{eqnarray}\label{DV_pmIteration1}
	\int_{B_{r}}\left|DV_p\left(Dv(x)\right)\right|^2dx&\le&\frac{1}{2}\int_{B_{\lambda^2 r}}\left|DV_p\left(Dv(x)\right)\right|^2dx\cr\cr
	&&+\frac{c}{r^{\frac{p+2}{p}}\left(\lambda^2-1\right)^{\frac{p+2}{p}}}\left[\int_{B_{R}} \left(\mu^2+\left|Dv\left(x\right)\right|^2\right)^\frac{p}{2}dx\right.\cr\cr
	&&\left.+\left(\int_{B_{4 R}}\left|v(x)\right|^{2m}dx\right)^\frac{1}{m+1}\cdot\left(\int_{B_{4 R}}\left(\mu^2+\left|Dv(x)\right|^2\right)^\frac{p}{2}dx\right)^\frac{m}{m+1}\right.\cr\cr
	&&+\left.\int_{B_{R}}g^{\frac{2m\left(p+2\right)}{2m-p}}(x)dx+\int_{B_R}\left|f(x)\right|^\frac{2m\left(p+2\right)}{2mp+p-2}dx+\left|B_R\right|+1\right].
\end{eqnarray}

Now, if we set 
$$
h(r)=\int_{B_{r}}\left|DV_p\left(Dv(x)\right)\right|^2dx,
$$
\begin{eqnarray*}
A&=&\left[\int_{B_{R}} \left(\mu^2+\left|Dv\left(x\right)\right|^2\right)^\frac{p}{2}dx\right.\cr\cr
&&\left.+\left(\int_{B_{4 R}}\left|v(x)\right|^{2m}dx\right)^\frac{1}{m+1}\cdot\left(\int_{B_{4 R}}\left(\mu^2+\left|Dv(x)\right|^2\right)^\frac{p}{2}dx\right)^\frac{m}{m+1}\right.\cr\cr
&&+\left.\int_{B_{R}}g^{\frac{2m\left(p+2\right)}{2m-p}}(x)dx+\int_{B_R}\left|f(x)\right|^\frac{2m\left(p+2\right)}{2mp+p-2}dx+\left|B_R\right|+1\right], 
\end{eqnarray*}
and 
$$
B=0
$$
since \eqref{DV_pmIteration1} holds for any $\lambda\in(1, 2)$, we can apply Lemma \ref{iter} with
$$
\theta=\frac{1}{2}\qquad\mbox{ and }\qquad\gamma=\frac{p+2}{p}, 
$$
thus getting
\begin{eqnarray}\label{apriorimPf}
	\int_{B_{\frac{R}{2}}}\left|DV_p\left(Dv(x)\right)\right|^2dx&\le&\frac{c}{R^{\frac{p+2}{p}}}\left[\int_{B_{R}} \left(\mu^2+\left|Dv\left(x\right)\right|^2\right)^\frac{p}{2}dx\right.\cr\cr
	&&\left.+\left(\int_{B_{4 R}}\left|v(x)\right|^{2m}dx\right)^\frac{1}{m+1}\cdot\left(\int_{B_{4 R}}\left(\mu^2+\left|Dv(x)\right|^2\right)^\frac{p}{2}dx\right)^\frac{m}{m+1}\right.\cr\cr
	&&+\left.\int_{B_{R}}g^{\frac{2m\left(p+2\right)}{2m-p}}(x)dx+\int_{B_R}\left|f(x)\right|^\frac{2m\left(p+2\right)}{2mp+p-2}dx+\left|B_R\right|+1\right],
\end{eqnarray}

which is the desired a priori estimate.
\\
\medskip

{\bf Step 2: the approximation.}\\
As we did in the second step of the proof of Theorem \ref{CGPThm1}, let us consider an open set $\Omega'\Subset\Omega$ and, for any $\varepsilon\in\left(0, d\left(\Omega', \partial\Omega\right)\right)$, a standard family of mollifiers $\Set{\phi_\varepsilon}_\varepsilon$.\\
Let us consider a ball $B_{\tilde{R}}=B_{\tilde{R}}\left(x_0\right)\Subset\Omega'$ with $\tilde{R}<1$ and, for each $\varepsilon$, the functional

\begin{equation}\label{modenergymeps}
	\mathcal{F}_{m, \varepsilon}\left(w, B_{\tilde{R}}\right)=\int_{B_{\tilde{R}}}\left[F_\varepsilon\left(x, Dw(x)\right)-f_{\varepsilon}(x)\cdot w(x)+\left(\left|w(x)\right|-a\right)^{2m}_+\right]dx,
\end{equation}

where $F_\varepsilon$ is defined as in \eqref{Fepsdef} and $f_\varepsilon$ is defined as in \eqref{fepsdef}.

With this choices, we have
\begin{equation}\label{convFm}
	\int_{B_{\tilde{R}}}F_\varepsilon\left(x, \xi\right)dx\to\int_{ B_{\tilde{R}}}F\left(x, \xi\right)dx,\qquad\mbox{ as }\varepsilon\to0
\end{equation}

for any $\xi\in\R^{n\times N}.$\\

Moreover, since $f\in L^{\frac{2m\left(p+2\right)}{2mp+p-2}}_{\loc}\left(\Omega\right)$, then

\begin{equation}\label{convfortefm1}
f_{\varepsilon}\to f\qquad\mbox{ strongly in }L^{\frac{2m\left(p+2\right)}{2mp+p-2}}\left(B_{\tilde{R}}\right),\mbox{ as }\varepsilon\to0.
\end{equation}
 
Let us observe that

$$
\frac{2m\left(p+2\right)}{2mp+p-2}\ge\frac{2m}{2m-1}
$$ 

if and only if 

$$
\left(2m-1\right)\left(p+2\right)\ge2mp+p-2, 
$$

i.e.

$$
2m\ge p, 
$$

which is true for any $m>1$, as long as $1<p<2$.

For this reason, $f\in L^\frac{2m}{2m-1}_{\loc}\left(B_{\tilde{R}}\right)$, and we also have

\begin{equation}\label{convfortefm2}
	f_{\varepsilon}\to f\qquad\mbox{ strongly in }L^{\frac{2m}{2m-1}}\left(B_{\tilde{R}}\right),\mbox{ as }\varepsilon\to0.
\end{equation} 
Again, as in the proof of Theorem \ref{CGPThm1}, thanks to \eqref{F1}--\eqref{F4}, for any $\varepsilon$, we have the validity of \eqref{F1eps}--\eqref{F4eps}, where $g_\varepsilon$ is defined in \eqref{gepsdef}.

In this case, since $g\in L^{\frac{2m\left(p+2\right)}{2mp-p}}_\loc\left(\Omega\right)$, we have

\begin{equation}\label{convgmeps}
	g_{\varepsilon}\to g\qquad\mbox{ strongly in }L^{\frac{2m\left(p+2\right)}{2mp-p}}\left(B_{\tilde{R}}\right)\mbox{ as }\varepsilon\to0.
\end{equation}

Let $v_\varepsilon\in \left(v+W^{1,p}_{0}\left(B_{\tilde{R}}\right)\right)\cap L^{2m}\left(B_R\right)$ be the solution to

$$
\min\Set{\mathcal{F}_{m, \varepsilon}\left(w,B_{\tilde{R}}\right): w\in  \left(v+W^{1,p}_{0}\left(B_{\tilde{R}}\right)\right)\cap L^{2m}\left(B_{\tilde{R}}\right)},
$$

where $v\in W^{1,p}_{\loc}\left(\Omega\right)\cap L^{2m}\left(B_R\right)$ is a local minimizer of \eqref{modenergym}.\\

By virtue of the minimality of $v_\varepsilon$, we have

\begin{eqnarray}\label{minimalitym*}
	&&\int_{B_{\tilde{R}}}\left[F_\varepsilon\left(x, Dv_\varepsilon(x)\right) +\left(\left|v_\varepsilon(x)\right|-a\right)^{2m}_+\right]dx\cr\cr
	&\le&\int_{B_{\tilde{R}}}\left[F_\varepsilon\left(x, Dv(x)\right)+f_{\varepsilon}(x)\cdot \left(v_\varepsilon(x)-v(x)\right)+\left(\left|v(x)\right|-a\right)^{2m}_+\right]dx\cr\cr
	&\le&\int_{B_{\tilde{R}}}\left[F_\varepsilon\left(x, Dv(x)\right)+\left|f_{\varepsilon}(x)\right|\cdot \left|v_\varepsilon(x)-v(x)\right|+\left(\left|v(x)\right|-a\right)^{2m}_+\right]dx.
\end{eqnarray}

Now, using H\"{o}lder's and Young's inequalities with exponents $\left(2m, \frac{2m}{2m-1}\right)$, we get

\begin{eqnarray}\label{controlloterminenotom}
	&&\int_{B_{\tilde{R}}}\left|f_{\varepsilon}(x)\right|\cdot \left|v_\varepsilon(x)-v(x)\right|dx\cr\cr
	&\le&\int_{B_{\tilde{R}}}\left|f_{\varepsilon}(x)\right|\left|v_\varepsilon(x)\right|dx+\int_{B_{\tilde{R}}}\left|f_{\varepsilon}(x)\right|\left|v(x)\right|dx\cr\cr
	&=&\int_{B_{\tilde{R}}}\left|f_{\varepsilon}(x)\right|\left(\left|v_\varepsilon(x)\right|-a\right)dx+\int_{B_{\tilde{R}}}a\left|f_{\varepsilon}(x)\right|dx+\int_{B_{\tilde{R}}}\left|f_{\varepsilon}(x)\right|\left|v(x)\right|dx\cr\cr
	&=&\int_{B_{\tilde{R}}\cap\Set{\left|v_\varepsilon\right|\ge a}}\left|f_{\varepsilon}(x)\right|\left(\left|v_\varepsilon(x)\right|-a\right)dx+\int_{B_{\tilde{R}}\cap\Set{\left|v_\varepsilon\right|<a}}\left|f_{\varepsilon}(x)\right|\left(\left|v_\varepsilon(x)\right|-a\right)dx\cr\cr
	&&+\int_{B_{\tilde{R}}}\left|f_{\varepsilon}(x)\right|\left(\left|v(x)\right|+a\right)dx\cr\cr
	&\le&\int_{B_{\tilde{R}}\cap\Set{\left|v_\varepsilon\right|\ge a}}\left|f_{\varepsilon}(x)\right|\left(\left|v_\varepsilon(x)\right|-a\right)_+dx+\int_{B_{\tilde{R}}}\left|f_{\varepsilon}(x)\right|\left(\left|v(x)\right|+a\right)dx\cr\cr
	&\le&\int_{B_{\tilde{R}}}\left|f_{\varepsilon}(x)\right|\left(\left|v_\varepsilon(x)\right|-a\right)_+dx+\int_{B_{\tilde{R}}}\left|f_{\varepsilon}(x)\right|\left(\left|v(x)\right|+a\right)dx\cr\cr
	&\le&\left(\int_{B_{\tilde{R}}}\left|f_{\varepsilon}(x)\right|^\frac{2m}{2m-1}dx\right)^\frac{2m-1}{2m}\cdot\left(\int_{B_{\tilde{R}}}\left(\left|v_\varepsilon(x)\right|-a\right)_+^{2m}dx\right)^\frac{1}{2m}\cr\cr
	&&+\left(\int_{B_{\tilde{R}}}\left|f_{\varepsilon}(x)\right|^\frac{2m}{2m-1}dx\right)^\frac{2m-1}{2m}\cdot\left(\int_{B_{\tilde{R}}}\left(\left|v(x)\right|+a\right)^{2m}dx\right)^\frac{1}{2m}\cr\cr
	&\le&c_\sigma\int_{B_{\tilde{R}}}\left|f_{\varepsilon}(x)\right|^\frac{2m}{2m-1}dx+\sigma\int_{B_{\tilde{R}}}\left(\left|v_\varepsilon(x)\right|-a\right)_+^{2m}dx\cr\cr
	&&+c\int_{B_{\tilde{R}}}\left(\left|v(x)\right|+a\right)^{2m}dx,
\end{eqnarray}

where $\sigma>0$ will be chosen later.\\
Plugging \eqref{controlloterminenotom} into \eqref{minimalitym*}, and choosing a sufficiently small $\sigma$, we get

\begin{eqnarray}\label{minimalitym**}
	&&\int_{B_{\tilde{R}}}\left[F_\varepsilon\left(x, Dv_\varepsilon(x)\right) +c\left(\left|v_\varepsilon(x)\right|-a\right)^{2m}_+\right]dx\cr\cr
	&\le&\int_{B_{\tilde{R}}}\left[F_\varepsilon\left(x, Dv(x)\right)+c\left(\left|v(x)\right|-a\right)^{2m}_+\right]dx\cr\cr
	&&+c\int_{B_{\tilde{R}}}\left|f_{\varepsilon}(x)\right|^\frac{2m}{2m-1}dx+c\int_{B_{\tilde{R}}}\left(\left|v(x)\right|+a\right)^{2m}dx. 
\end{eqnarray}

Using the right-hand side inequality in \eqref{F1eps} in \eqref{minimalitym**}, we have

\begin{eqnarray}\label{minimalitym}
	&&\int_{B_{\tilde{R}}}\left[F_\varepsilon\left(x, Dv_\varepsilon(x)\right) +c\left(\left|v_\varepsilon(x)\right|-a\right)^{2m}_+\right]dx\cr\cr
	&\le&L\int_{B_{\tilde{R}}}\left[\left(\mu^2+\left|Dv(x)\right|^2\right)^\frac{p}{2}+c\left(\left|v(x)\right|-a\right)^{2m}_+\right]dx\cr\cr
	&&+c\int_{B_{\tilde{R}}}\left|f_{\varepsilon}(x)\right|^\frac{2m}{2m-1}dx+c\int_{B_{\tilde{R}}}\left(\left|v(x)\right|+a\right)^{2m}dx.
\end{eqnarray}

Now, by the left-hand side inequality in \eqref{F1eps}, we get

\begin{eqnarray}\label{W1pBoundm}
	\ell\int_{B_{\tilde{R}}}\left(\mu^2+\left|Dv_\varepsilon(x)\right|^2\right)^\frac{p}{2}dx&\le&\int_{B_{\tilde{R}}}F_\varepsilon\left(x, Dv_\varepsilon(x)\right) dx\cr\cr
	&\le&\int_{B_{\tilde{R}}}\left[F_\varepsilon\left(x, Dv_\varepsilon(x)\right) +\left(\left|v_\varepsilon(x)\right|-a\right)^{2m}_+\right]dx\cr\cr
	&\le&L\int_{B_{\tilde{R}}}\left[\left(\mu^2+\left|Dv(x)\right|^2\right)^\frac{p}{2}+\left(\left|v(x)\right|-a\right)^{2m}_+\right]dx\cr\cr
	&&+c\int_{B_{\tilde{R}}}\left|f_{\varepsilon}(x)\right|^\frac{2m}{2m-1}dx\cr\cr
	&&+c\int_{B_{\tilde{R}}}\left(\left|v(x)\right|+a\right)^{2m}dx,
\end{eqnarray}

and this, by \eqref{convfortefm2}, means that $\Set{v_\varepsilon}_\varepsilon$ is bounded in $W^{1, p}\left(B_{\tilde{R}}\right)$, independently of $\varepsilon$, so there exists a function $\tilde{v}\in W^{1, p}\left(B_{\tilde{R}}\right)$ such that, up to a subsequence, we have

$$
v_\varepsilon\rightharpoonup \tilde{v}\qquad\mbox{ weakly in }W^{1, p}\left(B_{\tilde{R}}\right),
$$

$$
v_\varepsilon\to \tilde{v} \qquad\mbox{ strongly in }L^{p}\left(B_{\tilde{R}}\right), 
$$

and

$$
v_\varepsilon\to \tilde{v} \qquad\mbox{ almost everywhere in }B_{\tilde{R}}, 
$$
as $\varepsilon\to0$.\\
Moreover, we have

\begin{eqnarray}\label{L2mBound*}
	\int_{B_{\tilde{R}}}\left|v_\varepsilon(x)\right|^{2m}dx&\le&\int_{B_{\tilde{R}}\cap\Set{\left|v_\varepsilon\right|< a}}\left|v_\varepsilon(x)\right|^{2m}dx+\int_{B_{\tilde{R}}\cap\Set{\left|v_\varepsilon\right|\ge a}}\left|v_\varepsilon(x)\right|^{2m}dx\cr\cr
	&\le&\int_{B_{\tilde{R}}\cap\Set{\left|v_\varepsilon\right|< a}}\left|v_\varepsilon(x)\right|^{2m}dx+\int_{B_{\tilde{R}}\cap\Set{\left|v_\varepsilon\right|\ge a}}\left(\left|\left|v_\varepsilon(x)\right|-a\right|+a\right)^{2m}dx\cr\cr
	&\le&\int_{B_{\tilde{R}}\cap\Set{\left|v_\varepsilon\right|< a}}a^{2m}dx+c\int_{B_{\tilde{R}}\cap\Set{\left|v_\varepsilon\right|\ge a}}\left(\left|v_\varepsilon(x)\right|-a\right)^{2m}dx\cr\cr
	&&+c\int_{B_{\tilde{R}}\cap\Set{\left|v_\varepsilon\right|\ge a}}a^{2m}dx\cr\cr
	&\le&c\int_{B_{\tilde{R}}}a^{2m}dx+c\int_{B_{\tilde{R}}}\left(\left|v_\varepsilon(x)\right|-a\right)^{2m}_{+}dx,
\end{eqnarray}

and since \eqref{minimalitym} implies

\begin{eqnarray}\label{L2mBound}
	&&\int_{B_{\tilde{R}}}\left(\left|v_\varepsilon(x)\right|-a\right)^{2m}_+dx\cr\cr
	&\le&L\int_{B_{\tilde{R}}}\left[\left(\mu^2+\left|Dv(x)\right|^2\right)^\frac{p}{2}+\left(\left|v(x)\right|-a\right)^{2m}_+\right]dx\cr\cr
	&&+c\int_{B_{\tilde{R}}}\left|f_{\varepsilon}(x)\right|^\frac{2m}{2m-1}dx+c\int_{B_{\tilde{R}}}\left(\left|v(x)\right|+a\right)^{2m}dx,
\end{eqnarray}

by \eqref{convfortefm2}, and plugging \eqref{L2mBound} into \eqref{L2mBound*}, using dominate convergence theorem, we have

\begin{equation}\label{L2mconvforte}
	v_\varepsilon\to \tilde{v} \qquad\mbox{ strongly in }L^{2m}\left(B_{\tilde{R}}\right),\mbox{ as }\varepsilon\to0. 
\end{equation}

Since $v_\varepsilon$ is a local minimizer of the functional \eqref{modenergymeps} and $g_{\varepsilon}, f_{\varepsilon}\in C^\infty\left(B_{\tilde{R}}\right)$, we have 

$$
V_p\left(Dv_\varepsilon\right)\in W^{1,2}_{\loc}\left(B_{\tilde{R}}\right), 
$$

and we can apply estimate \eqref{apriorimPf}, thus getting 

\begin{eqnarray}\label{Step2estimatem}
\int_{B_{\frac{r}{2}}}\left|DV_p\left(Dv_\varepsilon(x)\right)\right|^2dx&\le&\frac{c}{r^{\frac{p+2}{p}}}\left[\int_{B_{r}} \left(\mu^2+\left|Dv_\varepsilon\left(x\right)\right|^2\right)^\frac{p}{2}dx\right.\cr\cr
&&\left.+\left(\int_{B_{4 r}}\left|v_\varepsilon(x)\right|^{2m}dx\right)^\frac{1}{m+1}\cdot\left(\int_{B_{4 r}}\left(\mu^2+\left|Dv_\varepsilon(x)\right|^2\right)^\frac{p}{2}dx\right)^\frac{m}{m+1}\right.\cr\cr
&&+\left.\int_{B_{r}}g_\varepsilon^{\frac{2m\left(p+2\right)}{2m-p}}(x)dx+\int_{B_r}\left|f_\varepsilon(x)\right|^\frac{2m\left(p+2\right)}{2mp+p-2}dx+\left|B_r\right|+1\right],
\end{eqnarray}

for any ball $B_{4r}\Subset B_{\tilde{R}}$.\\
Applying Lemma \ref{differentiabilitylemma}, by \eqref{differentiabilityestimate} and \eqref{Step2estimatem}, recalling \eqref{convfortefm1}, \eqref{convgmeps}, \eqref{W1pBoundm}, \eqref{L2mBound*} and \eqref{L2mBound}, and by a covering argument, we infer that $v_\varepsilon$ is bounded in $W^{2, p}\left(B_{2r}\right)$, which implies

\begin{equation}\label{convvepsW1p}
	v_\varepsilon\to\tilde{v}\qquad\mbox{ strongly in }W^{1,p}\left(B_{4r}\right)
\end{equation}

and 

$$
v_\varepsilon\to\tilde{v}\qquad\mbox{ almost everywhere in }B_{4r},
$$

up to a subsequence, as $\varepsilon\to0$.\\
By virtue of the continuity of the function $\xi\mapsto DV_p\left(\xi\right)$, we also have

 $$DV_p\left(Dv_\varepsilon\right)\to DV_p\left(D\tilde{v}\right)\qquad\mbox{ almost everywhere in }B_{4r},\mbox{ as }\varepsilon\to0.$$

For what we discussed above, and recalling \eqref{convfortefm1}, \eqref{convgmeps}, \eqref{L2mconvforte} and \eqref{convvepsW1p}, thanks to the dominate convergence theorem, we can pass to the limit in \eqref{Step2estimatem} as $\varepsilon\to0$, thus getting

\begin{eqnarray}\label{boundVpepsm}
&&\int_{B_{\frac{r}{2}}}\left|DV_p\left(D\tilde{v}(x)\right)\right|^2dx\cr\cr
&\le&\frac{c}{r^{\frac{p+2}{p}}}\left[\int_{B_{r}} \left(\mu^2+\left|D\tilde{v}\left(x\right)\right|^2\right)^\frac{p}{2}dx\right.\cr\cr
&&\left.+\left(\int_{B_{4 r}}\left|\tilde{v}(x)\right|^{2m}dx\right)^\frac{1}{m+1}\cdot\left(\int_{B_{4 r}}\left(\mu^2+\left|D\tilde{v}(x)\right|^2\right)^\frac{p}{2}dx\right)^\frac{m}{m+1}\right.\cr\cr
&&+\left.\int_{B_{r}}g^{\frac{2m\left(p+2\right)}{2m-p}}(x)dx+\int_{B_r}\left|f(x)\right|^\frac{2m\left(p+2\right)}{2mp+p-2}dx+\left|B_r\right|+1\right].
\end{eqnarray}

Our next aim is to prove that $\tilde{v}=v$ a.e. in $B_{\tilde{R}}$.\\
	First, let us observe that, using H\"older's inequality with exponents $\left(2m, \frac{2m}{2m-1}\right)$, we get
	\begin{eqnarray*}\label{finalconv2*}
		&&\left|\int_{B_{\tilde{R}}}\left[f_{\varepsilon}(x)\cdot
		v(x)-f(x)\cdot
		v(x)\right]dx\right|\cr\cr&\le&\int_{B_{\tilde{R}}}\left|f_{\varepsilon}(x)-f(x)\right|\cdot
		\left|v(x)\right|dx\cr\cr
		&\le&\left(\int_{B_{\tilde{R}}}\left|f_{\varepsilon}(x)-f(x)\right|^\frac{2m}{2m-1}dx\right)^\frac{2m-1}{2m}\cdot\left(\int_{B_{\tilde{R}}}\left|v(x)\right|^{2m}dx\right)^\frac{1}{2m}, 
	\end{eqnarray*}
that, recalling \eqref{convfortefm2}, implies
\begin{equation}\label{finalconv2}
	\lim_{\varepsilon\to 0}\int_{B_{\tilde{R}}}f_{\varepsilon}(x)\cdot v(x)dx=\int_{B_{\tilde{R}}}f(x)\cdot v(x)dx.
\end{equation}
	The minimality of $v$, Fatou's Lemma, the lower semicontinuity of $\mathcal{F}_{m, \varepsilon}$ and the minimality of $v_\varepsilon$ imply
	
\begin{eqnarray*}
	&&\int_{B_{\tilde{R}}}\left[F\left(x,Dv(x)\right)-f(x)\cdot
	v(x)+\left(\left|v(x)\right|-a\right)^{2m}_+\right]dx\cr\cr 
	&\le& \int_{B_{\tilde{R}}}\left[F\left(x,D\tilde{v}(x)\right)-f(x)\cdot
	\tilde{v}(x)+\left(\left|\tilde{v}(x)\right|-a\right)^{2m}_+\right]dx\cr\cr
	&\le& \liminf_{\varepsilon\to 0} \int_{B_{\tilde{R}}}\left[F_\varepsilon\left(x,D\tilde{v}(x)\right)-f_{\varepsilon}(x)\cdot
	\tilde{v}(x)+\left(\left|\tilde{v}(x)\right|-a\right)^{2m}_+\right]dx\cr\cr
	&\le& \liminf_{\varepsilon\to 0} \int_{B_{\tilde{R}}}\left[F_\varepsilon\left(x,Dv_\varepsilon(x)\right)-f_{\varepsilon}(x)\cdot
	v_\varepsilon(x)+\left(\left|v_\varepsilon(x)\right|-a\right)^{2m}_+\right]dx\cr\cr
	&\le& \liminf_{\varepsilon\to 0} \int_{B_{\tilde{R}}}\left[F_\varepsilon\left(x,Dv(x)\right)-f_{\varepsilon}(x)\cdot
	v(x)+\left(\left|v(x)\right|-a\right)^{2m}_+\right]dx\cr\cr
	&=&\int_{B_{\tilde{R}}}\left[F\left(x,Dv(x)\right)-f(x)\cdot
	v(x)+\left(\left|v(x)\right|-a\right)^{2m}_+\right]dx,
\end{eqnarray*}
where the last inequality is a consequence of \eqref{convFm} and \eqref{finalconv2}.\\ 
Therefore $\mathcal{F}_m\left(D\tilde{v},B_{\tilde{R}}\right)=\mathcal{F}_m\left(Dv,B_{\tilde{R}}\right)$ and the strict convexity of the functional yields that $\tilde{v}=v$ a.e. in $B_{\tilde{R}}$.
So \eqref{boundVpepsm} and a covering argument yield \eqref{approxmestimate}.
\end{proof}

We conclude this section with some consequences of Theorem \ref{approxmthm}, which follow by Lemma \ref{differentiabilitylemma} and Remark \ref{rmk3}.

\begin{corollary}\label{corollarym}
	Let $\Omega\subset\R^n$ be a bounded open set, $m>1$, $a>0$ and $1<p<2$.\\
	Let $v\in W^{1, p}_{\loc}\left(\Omega, \R^N\right)\cap L^{2m}_{\loc}\left(\Omega, \R^N\right)$ be a local minimizer of the functional \eqref{modenergym}, under the assumptions \eqref{F1}--\eqref{F4}, with  
	
	$$f\in L^{\frac{2m\left(p+2\right)}{2mp+p-2}}_{\loc}\left(\Omega\right)\qquad\mbox{ and }\qquad g\in L^{\frac{2m\left(p+2\right)}{2m-p}}_{\loc}\left(\Omega\right).$$
	
	Then $v\in W^{2, p}_{\loc}\left(\Omega\right)$ and $Dv\in L^{\frac{m\left(p+2\right)}{m+1}}_{\loc}\left(\Omega\right)$.
\end{corollary}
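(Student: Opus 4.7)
The plan is to read this corollary as a compact packaging of three previously established facts, so there is essentially no new computation to do. The first step is to invoke Theorem \ref{approxmthm}: the assumptions on $v$, $f$ and $g$ match verbatim those of that theorem, so one obtains $V_p(Dv)\in W^{1,2}_{\loc}(\Omega)$ together with the quantitative estimate \eqref{approxmestimate} on every ball $B_{4R}\Subset\Omega$. This already does all the serious work.

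From here the first conclusion is immediate. Since $v\in W^{1,p}_{\loc}(\Omega,\R^N)$ by hypothesis and $V_p(Dv)\in W^{1,2}_{\loc}(\Omega)$ by the preceding step, Lemma \ref{differentiabilitylemma} (equivalently Remark \ref{rmk2}) yields $v\in W^{2,p}_{\loc}(\Omega)$, together with the bound \eqref{differentiabilityestimate} and the pointwise comparison \eqref{lemma6GPestimate2}. For the second conclusion, $Dv\in L^{\frac{m(p+2)}{m+1}}_{\loc}(\Omega)$, I would appeal to Remark \ref{rmk3}. Having established $v\in L^{2m}_{\loc}(\Omega)$ (by hypothesis, with $m>1$), $V_p(Dv)\in W^{1,2}_{\loc}(\Omega)$ and the validity of \eqref{lemma6GPestimate2}, the Gagliardo--Nirenberg type inequality \eqref{2.1GP} can be tested against any non-negative cut-off $\phi\in C^1_0(\Omega)$: the $L^{2m}$ factor is finite by assumption, the $|D\phi|^2(\mu^2+|Dv|^2)^{p/2}$ factor is finite because $Dv\in L^p_{\loc}$, and the $(\mu^2+|Dv|^2)^{(p-2)/2}|D^2v|^2$ factor is controlled by $\int|D(V_p(Dv))|^2$ through \eqref{lemma6GPestimate2}. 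The desired local $L^{\frac{m(p+2)}{m+1}}$ bound on $Dv$ follows by a standard covering argument.

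No genuine obstacle is expected: each step is a direct citation of a tool already built in Sections \ref{preliminaryresults}--\ref{auxiliaryfunction}, plus the theorem proved just above. The only item that deserves a moment of care is to verify that the exponent $\frac{m(p+2)}{m+1}$ produced by \eqref{2.1GP} coincides exactly with the one claimed in the statement, which is true by direct inspection. Consequently the proof will amount to little more than a sequence of citations and no new a priori estimate or approximation argument is required beyond what is already contained in the proof of Theorem \ref{approxmthm}.
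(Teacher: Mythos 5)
Your proposal is correct and follows exactly the route the paper takes: the paper states that the corollary follows from Theorem \ref{approxmthm} together with Lemma \ref{differentiabilitylemma} and Remark \ref{rmk3}, and your write-up simply fills in the detail that the hypotheses of Theorem \ref{approxmthm} are verified verbatim and that the resulting $V_p(Dv)\in W^{1,2}_{\loc}(\Omega)$ feeds into those two tools to give $v\in W^{2,p}_{\loc}(\Omega)$ and $Dv\in L^{\frac{m(p+2)}{m+1}}_{\loc}(\Omega)$.
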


\section{The case of bounded minimizers: proof of Theorem \ref{inftythm}}\label{Thm2pf}

The aim of this section is to prove Theorem \ref{inftythm}.\\
As we will see below, the proof is achieved by using an approximation argument which is based on the possibility to apply Theorem \ref{approxmthm} and pass to the limit as $m\to\infty$.

\begin{proof}[Proof of Theorem \ref{inftythm}]
	Arguing as in the second step of the proof of Theorem \ref{approxmthm}, let us consider an open set $\Omega'\Subset\Omega$ and, for any $\varepsilon\in\left(0, d\left(\Omega', \partial\Omega\right)\right)$, a standard family of mollifiers $\Set{\phi_\varepsilon}_\varepsilon$.\\
	Let $u\in W^{1, p}_\loc\left(\Omega\right)\cap L^\infty_\loc\left(\Omega\right)$ be a local minimizer of the functional \eqref{modenergy}, and let us consider a ball $B_{\tilde{R}}=B_{\tilde{R}}\left(x_0\right)\Subset\Omega'$, with $\tilde{R}<1$.\\ 
	For each $\varepsilon$ and any $m>1$, let us consider the functional $\mathcal{F}_{m, \varepsilon}$, defined by \eqref{modenergymeps}, where $F_\varepsilon$ and $f_\varepsilon$ are defined by \eqref{Fepsdef} and \eqref{fepsdef} respectively, and we fix
	
	\begin{equation}\label{afix}
		a=\left\Arrowvert u\right\Arrowvert_{L^\infty\left(B_{\tilde{R}}\right)}.
	\end{equation}
With these choices, we have \eqref{convFm} again, and since $f\in L^{\frac{p+2}{p}}_{\loc}\left(\Omega\right)$, we have

\begin{equation}\label{convfinf1}
	f_\varepsilon\to f \qquad\mbox{ strongly in }L^{\frac{p+2}{p}}\left(B_{\tilde{R}}\right),\qquad\mbox{ as }\varepsilon\to0.
\end{equation}

Again, thanks to \eqref{F1}--\eqref{F4}, for any $\varepsilon$, we have \eqref{F1eps}--\eqref{F4eps}, where $g_\varepsilon$ is defined like in \eqref{gepsdef}.

In this case, since $g\in L^{p+2}_\loc\left(\Omega\right)$, we have

\begin{equation}\label{convgeps}
	g_\varepsilon\to g\qquad\mbox{ strongly in }L^{p+2}\left(B_{\tilde{R}}\right),\mbox{ as }\varepsilon\to0.
\end{equation}

Let us observe that $f_\varepsilon\in L^{\frac{2m\left(p+2\right)}{2mp+p-2}}\left(B_{\tilde{R}}\right)$ for any $m>1$, and since
$$
\frac{2m\left(p+2\right)}{2mp+p-2}\searrow\frac{p+2}{p},\qquad\mbox{ as }m\to\infty,
$$
we have
\begin{equation}\label{convfinfepsm}
	\lim_{m\to\infty}\left(\int_{B_{\tilde{R}}}\left|f_\varepsilon(x)\right|^\frac{2m\left(p+2\right)}{2mp+p-2}dx\right)^\frac{2mp+p-2}{2m\left(p+2\right)}=\left(\int_{B_{\tilde{R}}}\left|f_\varepsilon(x)\right|^\frac{p+2}{p}dx\right)^\frac{p}{p+2},
\end{equation}
for any $\varepsilon$.

Similarly, then $g_\varepsilon\in L^{\frac{2m\left(p+2\right)}{2m-p}}\left(B_{\tilde{R}}\right)$ for any $m>1$ and for any $\varepsilon$, and we have 

\begin{equation}\label{gepsboundm}
	\lim_{m\to\infty}\left(\int_{B_{\tilde{R}}}\left|g_{\varepsilon}(x)\right|^{\frac{2m\left(p+2\right)}{2m-p}}dx\right)^{\frac{2m-p}{2m\left(p+2\right)}}=\left(\int_{B_{\tilde{R}}}\left|g_{\varepsilon}(x)\right|^{p+2}dx\right)^{\frac{1}{p+2}},
\end{equation}

for each $\varepsilon$.\\

Now, for each $\varepsilon$, and for each $m>1$, let $u_{m, \varepsilon}\in \left(u+W^{1, p}_0\left(B_{\tilde{R}}\right)\right)\cap L^{2m}\left(B_{\tilde{R}}\right)$ be the solution to 

$$
\min\Set{\mathcal{F}_{m, \varepsilon}\left(w,B_{\tilde{R}}\right): w\in  \left(u+W^{1,p}_{0}\left(B_{\tilde{R}}\right)\right)\cap L^{2m}\left(B_{\tilde{R}}\right)}.
$$

By virtue of the minimality of $u_{m, \varepsilon}$, recalling \eqref{afix}, we have 

\begin{eqnarray}\label{minimalityinf*}
	&&\int_{B_{\tilde{R}}}\left[F_\varepsilon\left(x, Du_{m, \varepsilon}(x)\right) +\left(\left|u_{m, \varepsilon}(x)\right|-a\right)^{2m}_+\right]dx\cr\cr
	&\le&\int_{B_{\tilde{R}}}\left[F_\varepsilon\left(x, Du(x)\right)+f_{\varepsilon}(x)\cdot \left(u_{m, \varepsilon}(x)-u(x)\right)+\left(\left|u(x)\right|-a\right)^{2m}_+\right]dx\cr\cr
	&\le&\int_{B_{\tilde{R}}}\left[F_\varepsilon\left(x, Du(x)\right)+\left|f_{\varepsilon}(x)\right|\cdot \left|u_{m, \varepsilon}(x)-u(x)\right|\right]dx.
\end{eqnarray}

Arguing as we did in \eqref{controlloterminenotom} and exploiting \eqref{afix}, we get 

\begin{eqnarray}\label{controlloterminenotoinf*}
	&&\int_{B_{\tilde{R}}}\left|f_{\varepsilon}(x)\right|\cdot \left|u_{m, \varepsilon}(x)-u(x)\right|dx\cr\cr
	&\le&\int_{B_{\tilde{R}}}\left|f_{\varepsilon}(x)\right|\left(\left|u_{m, \varepsilon}(x)\right|-a\right)_+dx+2a\int_{B_{\tilde{R}}}\left|f_{\varepsilon}(x)\right|dx\cr\cr
	&\le&\left(\int_{B_{\tilde{R}}}\left|f_{\varepsilon}(x)\right|^{\frac{2m\left(p+2\right)}{p\left(2m-1\right)}}dx\right)^{\frac{p\left(2m-1\right)}{2m\left(p+2\right)}}\cdot\left(\int_{B_{\tilde{R}}}\left(\left|u_{m, \varepsilon}(x)\right|-a\right)_+^{\frac{2m\left(p+2\right)}{4m+p}}dx\right)^\frac{4m+p}{2m\left(p+2\right)}\cr\cr
	&&+2a\int_{B_{\tilde{R}}}\left|f_{\varepsilon}(x)\right|dx,
\end{eqnarray}

where, in the last line, we used H\"{o}lder's Inequality with exponents $\left(\frac{2m\left(p+2\right)}{p\left(2m-1\right)}, \frac{2m\left(p+2\right)}{4m+p}\right)$.
Let us notice that all the integrals in the last line of \eqref{controlloterminenotoinf*} are finite, since $f_\varepsilon\in C^{\infty}\left(B_{\tilde{R}}\right)$ and $\frac{2m\left(p+2\right)}{4m+p}<2m$ for any $m>1$ as long as $1<p<2$.
So, since $u_{m, \varepsilon}\in L^{2m}\left(B_{\tilde{R}}\right)\hookrightarrow L^{\frac{2m\left(p+2\right)}{4m+p}}\left(B_{\tilde{R}}\right)$, using Young's Inequality with exponents $\left(2m, \frac{2m}{2m-1}\right)$, we have 

\begin{eqnarray}\label{controlloterminenotoinf}
	&&\int_{B_{\tilde{R}}}\left|f_{\varepsilon}(x)\right|\cdot \left|u_{m, \varepsilon}(x)-u(x)\right|dx\cr\cr
	&\le&c\left(\int_{B_{\tilde{R}}}\left|f_{\varepsilon}(x)\right|^{\frac{2m\left(p+2\right)}{p\left(2m-1\right)}}dx\right)^{\frac{p\left(2m-1\right)}{2m\left(p+2\right)}}\cdot\left(\int_{B_{\tilde{R}}}\left(\left|u_{m, \varepsilon}(x)\right|-a\right)_+^{2m}dx\right)^\frac{1}{2m}\cr\cr
	&&+2a\int_{B_{\tilde{R}}}\left|f_{\varepsilon}(x)\right|dx\cr\cr
	&\le&c_\sigma\left(\int_{B_{\tilde{R}}}\left|f_{\varepsilon}(x)\right|^{\frac{2m\left(p+2\right)}{p\left(2m-1\right)}}dx\right)^{\frac{p}{p+2}}+\sigma\int_{B_{\tilde{R}}}\left(\left|u_{m, \varepsilon}(x)\right|-a\right)_+^{2m}dx\cr\cr
	&&+2a\int_{B_{\tilde{R}}}\left|f_{\varepsilon}(x)\right|dx, 
\end{eqnarray}

for any $\sigma>0$.

Plugging \eqref{controlloterminenotoinf} into \eqref{minimalityinf*}, choosing a sufficiently small value of $\sigma$ and recalling \eqref{F1eps}, we get

\begin{eqnarray}\label{minimalityinf}
	&&\int_{B_{\tilde{R}}}\left[\left(\mu^2+\left|Du_{m, \varepsilon}(x)\right|^2\right)^\frac{p}{2}+\left(\left|u_{m, \varepsilon}(x)\right|-a\right)^{2m}_+\right]dx\cr\cr
	&\le&L\int_{B_{\tilde{R}}}\left(\mu^2+\left|Du(x)\right|^2\right)^\frac{p}{2}dx+c\left(\int_{B_{\tilde{R}}}\left|f_{\varepsilon}(x)\right|^{\frac{2m\left(p+2\right)}{p\left(2m-1\right)}}dx\right)^{\frac{p}{p+2}}.
\end{eqnarray}

Now let us notice that, since

$$
\frac{2m\left(p+2\right)}{p\left(2m-1\right)}\ge\frac{p+2}{p}
$$

for any $m>1$, and

$$
\frac{2m\left(p+2\right)}{p\left(2m-1\right)}\searrow\frac{p+2}{p},\qquad\mbox{ as }m\to\infty,
$$

we have

\begin{eqnarray*}
	\lim_{m\to\infty}\left(\int_{B_{\tilde{R}}}\left|f_{\varepsilon}(x)\right|^{\frac{2m\left(p+2\right)}{p\left(2m-1\right)}}dx\right)^{\frac{p\left(2m-1\right)}{2m\left(p+2\right)}}=\left(\int_{B_{\tilde{R}}}\left|f_{\varepsilon}(x)\right|^{\frac{p+2}{p}}dx\right)^{\frac{p}{p+2}}, 
\end{eqnarray*}

and so

\begin{eqnarray}\label{fmepsconvm}
	\lim_{m\to\infty}\left(\int_{B_{\tilde{R}}}\left|f_{\varepsilon}(x)\right|^{\frac{2m\left(p+2\right)}{p\left(2m-1\right)}}dx\right)^{\frac{p}{p+2}}&=&\lim_{m\to\infty}\left(\int_{B_{\tilde{R}}}\left|f_{\varepsilon}(x)\right|^{\frac{2m\left(p+2\right)}{p\left(2m-1\right)}}dx\right)^{\frac{p\left(2m-1\right)}{2m\left(p+2\right)}\cdot\frac{2m}{\left(2m-1\right)}}\cr\cr
	&=&\left(\int_{B_{\tilde{R}}}\left|f_{\varepsilon}(x)\right|^{\frac{p+2}{p}}dx\right)^{\frac{p}{p+2}}, 
\end{eqnarray}

for any $\varepsilon$.\\
Hence, for any $\varepsilon$, the second integral in the right-hand side of \eqref{minimalityinf} can be bounded independently of $m$.\\
This implies that, for each $\varepsilon$, $\Set{u_{m, \varepsilon}}_m$ is bounded in $W^{1, p}\left(B_{\tilde{R}}\right)$, and so there exists a family of functions $\Set{u_{\varepsilon}}_\varepsilon\subset W^{1, p}\left(B_{\tilde{R}}\right)$ such that

\begin{equation*}\label{convminfw}
	u_{m, \varepsilon}\rightharpoonup u_\varepsilon\qquad\mbox{ weakly in }W^{1, p}\left(B_{\tilde{R}}\right),
\end{equation*} 

and so 

\begin{equation*}\label{convminfs}
	u_{m, \varepsilon}\to u_\varepsilon\qquad\mbox{ strongly in }L^{p}\left(B_{\tilde{R}}\right),
\end{equation*} 

and 

\begin{equation}\label{convminfae}
	u_{m, \varepsilon}\to u_\varepsilon\qquad\mbox{ almost everywhere in }B_{\tilde{R}},
\end{equation} 

as $m\to\infty$, up to a subsequence.\\
In particular, by \eqref{minimalityinf}, \eqref{convfinf1} and \eqref{fmepsconvm}, the set of functions $\Set{u_{\varepsilon}}_\varepsilon$ is bounded in $W^{1, p}\left(B_{\tilde{R}}\right)$, and so there exists a function $v\in W^{1, p}\left(B_{\tilde{R}}\right)$ such that

\begin{equation*}\label{conveps0inf}
	u_{\varepsilon}\rightharpoonup v\qquad\mbox{ weakly in }W^{1, p}\left(B_{\tilde{R}}\right),\mbox{ as }\varepsilon\to0.
\end{equation*} 

So we have

\begin{equation*}\label{conveps0infs}
	u_{\varepsilon}\to v\qquad\mbox{ strongly in }L^{p}\left(B_{\tilde{R}}\right)
\end{equation*} 

and 

\begin{equation}\label{conveps0infae}
	u_{\varepsilon}\to v \qquad\mbox{ almost everywhere in }B_{\tilde{R}},
\end{equation} 

up to a subsequence, as $\varepsilon\to0$.\\

On the other hand, \eqref{minimalityinf} implies

\begin{eqnarray}\label{L2mboundinf*}
	&&\int_{B_{\tilde{R}}}\left(\left|u_{m, \varepsilon}(x)\right|-a\right)^{2m}_+dx\cr\cr
	&\le&L\int_{B_{\tilde{R}}}\left(\mu^2+\left|Du(x)\right|^2\right)^\frac{p}{2}dx+c\left(\int_{B_{\tilde{R}}}\left|f_{\varepsilon}(x)\right|^{\frac{2m\left(p+2\right)}{p\left(2m-1\right)}}dx\right)^{\frac{p}{p+2}},
\end{eqnarray}

and this bound is independent of $m$ by virtue of \eqref{fmepsconvm}.\\

One can easily check that, for any $m>1$, we have

\begin{eqnarray}\label{L2mboundinf**}
	\int_{B_{\tilde{R}}}\left|u_{m, \varepsilon}(x)\right|^{2m}dx\le\int_{B_{\tilde{R}}}\left(\left|u_{m, \varepsilon}(x)\right|-a\right)_+^{2m}dx+c\int_{B_{\tilde{R}}}a^{2m}dx,
\end{eqnarray}

and so, by virtue of \eqref{L2mboundinf*}, for any $m>1$, we get

\begin{eqnarray*}
\int_{B_{\tilde{R}}}\left|u_{m, \varepsilon}(x)\right|^{2m}dx&\le&L\int_{B_{\tilde{R}}}\left(\mu^2+\left|Du(x)\right|^2\right)^\frac{p}{2}dx\cr\cr
&&+c\left(\int_{B_{\tilde{R}}}\left|f_{\varepsilon}(x)\right|^{\frac{2m\left(p+2\right)}{p\left(2m-1\right)}}dx\right)^{\frac{p}{p+2}}+c\int_{B_{\tilde{R}}}a^{2m}dx
\end{eqnarray*}

and

\begin{eqnarray}\label{L2mboundinf***}
	&&\left(\int_{B_{\tilde{R}}}\left|u_{m, \varepsilon}(x)\right|^{2m}dx\right)^\frac{1}{2m}\cr\cr
	&\le&c\left[\int_{B_{\tilde{R}}}\left(\mu^2+\left|Du(x)\right|^2\right)^\frac{p}{2}dx\right]^\frac{1}{2m}+c\left[\int_{B_{\tilde{R}}}a^{2m}dx\right]^\frac{1}{2m}\cr\cr
	&&+c\left(\int_{B_{\tilde{R}}}\left|f_{\varepsilon}(x)\right|^{\frac{2m\left(p+2\right)}{p\left(2m-1\right)}}dx\right)^{\frac{p\left(2m-1\right)}{2m\left(p+2\right)}\cdot\frac{1}{2m-1}}.
\end{eqnarray}

Now, if we pass to the $\limsup$ as $m\to\infty$ at both sides of \eqref{L2mboundinf***}, recalling \eqref{afix} and \eqref{fmepsconvm}, we get

\begin{equation*}
	\limsup_{m\to\infty}\left(\int_{B_{\tilde{R}}}\left|u_{m, \varepsilon}(x)\right|^{2m}dx\right)^\frac{1}{2m}\le c\left\Arrowvert u\right\Arrowvert_{L^\infty\left(B_{\tilde{R}}\right)},
\end{equation*}

and similarly, for any ball $B_{4r}\Subset B_{\tilde{R}}$, we have

\begin{equation*}
	\limsup_{m\to\infty}\left(\int_{B_{4r}}\left|u_{m, \varepsilon}(x)\right|^{2m}dx\right)^\frac{1}{2m}\le c\left\Arrowvert u\right\Arrowvert_{L^\infty\left(B_{4r}\right)}, 
\end{equation*}

which implies

\begin{equation}\label{limsupmeps}
	\limsup_{m\to\infty}\left(\int_{B_{4r}}\left|u_{m, \varepsilon}(x)\right|^{2m}dx\right)^\frac{1}{m+1}\le c\left\Arrowvert u\right\Arrowvert_{L^\infty\left(B_{4r}\right)}^2.
\end{equation}

Since, for any $m>1$ and for any $\varepsilon$, $u_{m, \varepsilon}\in \left(u+W^{1, p}_0\left(B_{\tilde{R}}\right)\right)\cap L^{2m}\left(B_{\tilde{R}}\right)$ is a minimizer of a functional of the form \eqref{modenergym}, which satisfies \eqref{F1eps}--\eqref{F4eps}, $g_\varepsilon\in L^{\frac{2m\left(p+2\right)}{2m-p}}\left(B_{\tilde{R}}\right)$ and $f_\varepsilon\in L^{\frac{2m\left(p+2\right)}{2mp+p-2}}\left(B_{\tilde{R}}\right)$, we can apply Theorem \ref{approxmthm}, and by \eqref{approxmestimate}, we get

\begin{eqnarray}\label{DVpmeps}
	&&\int_{B_{\frac{r}{2}}}\left|DV_p\left(Du_{m, \varepsilon}(x)\right)\right|^2dx\cr\cr
	&\le&\frac{c}{r^{\frac{p+2}{p}}}\left[\int_{B_{r}} \left(\mu^2+\left|Du_{m, \varepsilon}\left(x\right)\right|^2\right)^\frac{p}{2}dx\right.\cr\cr
	&&\left.+\left(\int_{B_{4 r}}\left|u_{m, \varepsilon}(x)\right|^{2m}dx\right)^\frac{1}{m+1}\cdot\left(\int_{B_{4 r}}\left(\mu^2+\left|Du_{m, \varepsilon}(x)\right|^2\right)^\frac{p}{2}dx\right)^\frac{m}{m+1}\right.\cr\cr
	&&+\left.\int_{B_{r}}g_\varepsilon^{\frac{2m\left(p+2\right)}{2m-p}}(x)dx+\int_{B_r}\left|f_\varepsilon(x)\right|^\frac{2m\left(p+2\right)}{2mp+p-2}dx+\left|B_r\right|+1\right],
\end{eqnarray}

for any ball $B_{4r}\Subset B_{\tilde{R}}$.\\
Moreover, we can use Lemma \ref{differentiabilitylemma} and \eqref{differentiabilityestimate}, thus getting

\begin{eqnarray}\label{D2Lpmestimatemeps}
	&&\int_{B_{\frac{r}{2}}}\left|D^2u_{m, \varepsilon}(x)\right|^pdx\cr\cr
&\le&\frac{c}{r^{\frac{p+2}{p}}}\left[\int_{B_{r}} \left(\mu^2+\left|Du_{m, \varepsilon}\left(x\right)\right|^2\right)^\frac{p}{2}dx\right.\cr\cr
&&\left.+\left(\int_{B_{4 r}}\left|u_{m, \varepsilon}(x)\right|^{2m}dx\right)^\frac{1}{m+1}\cdot\left(\int_{B_{4 r}}\left(\mu^2+\left|Du_{m, \varepsilon}(x)\right|^2\right)^\frac{p}{2}dx\right)^\frac{m}{m+1}\right.\cr\cr
&&+\left.\int_{B_{r}}g_\varepsilon^{\frac{2m\left(p+2\right)}{2m-p}}(x)dx+\int_{B_r}\left|f_\varepsilon(x)\right|^\frac{2m\left(p+2\right)}{2mp+p-2}dx+\left|B_r\right|+1\right].
\end{eqnarray}

By virtue of \eqref{convfinfepsm}, \eqref{gepsboundm}, \eqref{minimalityinf}, \eqref{fmepsconvm} and \eqref{limsupmeps}, all the integrals in the right-hand side of \eqref{D2Lpmestimatemeps} are bounded independently of $m$: for this reason, for each $\varepsilon$, the set of functions $\Set{u_{m, \varepsilon}}_m$ is bounded in $W^{2,p}\left(B_{\frac{r}{2}}\right)$, and since the ball $B_{4r}$ is arbitrary, a covering argument implies

\begin{equation}\label{W1pstrongmueps}
	u_{m, \varepsilon}\to u_\varepsilon\qquad\mbox{ strongly in }W^{1, p}\left(B_{4r}\right),
\end{equation} 

which gives

\begin{equation*}
	Du_{m, \varepsilon}\to Du_\varepsilon\qquad\mbox{ almost everywhere in }B_{4r},
\end{equation*} 

up to a subsequence, as $m\to\infty$.\\
So, passing to the limit as $m\to\infty$, recalling \eqref{minimalityinf} and \eqref{fmepsconvm}, we also get
	\begin{eqnarray}\label{mapproxinfW1pLim}
		&&\int_{B_{2r}}\left(\mu^2+\left|Du_{\varepsilon}(x)\right|^2\right)^\frac{p}{2}dx\cr\cr
		&\le&L\int_{B_{\tilde{R}}}\left(\mu^2+\left|Du(x)\right|^2\right)^\frac{p}{2}dx+c\left(\int_{B_{\tilde{R}}}\left|f_{\varepsilon}(x)\right|^{\frac{p+2}{p}}dx\right)^{\frac{p}{p+2}}.
	\end{eqnarray}

Therefore, since, by virtue of the continuity of $\xi\mapsto DV_p(\xi)$, we also have

$$
DV_p\left(Du_{m, \varepsilon}\right)\to DV_p\left(Du_{\varepsilon}\right)\qquad\mbox{ almost everywhere in }B_{2r},\mbox{ as }m\to\infty, 
$$

and we can apply Fatou's Lemma passing to the $\limsup$ as $m\to\infty$ in \eqref{DVpmeps}, using \eqref{convfinfepsm}, \eqref{gepsboundm}, \eqref{limsupmeps} and \eqref{W1pstrongmueps}, we get

\begin{eqnarray}\label{DVpeps}
&&\int_{B_{\frac{r}{2}}}\left|DV_p\left(Du_{\varepsilon}(x)\right)\right|^2dx\cr\cr
&\le&\frac{c\left\Arrowvert u\right\Arrowvert_{L^\infty\left(B_{4r}\right)}}{r^{\frac{p+2}{p}}}\left[\int_{B_{4r}} \left(\mu^2+\left|Du_{\varepsilon}\left(x\right)\right|^2\right)^\frac{p}{2}dx\right.\cr\cr
&&+\left.\int_{B_{r}}g_\varepsilon^{p+2}(x)dx+\int_{B_r}\left|f_\varepsilon(x)\right|^\frac{p+2}{p}dx+\left|B_r\right|+1\right],
\end{eqnarray}
where we also used the fact that $r<\tilde{R}<1$.\\
Now, since, by virtue of \eqref{convfinf1}, \eqref{convgeps}, and \eqref{mapproxinfW1pLim}, all the integrals in the right-hand side of \eqref{DVpeps} can be bounded independently of $\varepsilon$, arguing like in the proof of Lemma \ref{differentiabilitylemma}, it is possible to prove that $\Set{u_\varepsilon}_\varepsilon$ is bounded in $W^{2,p}\left(B_{\frac{r}{2}}\right)$, and since $r$ is arbitrary, a covering argument implies

\begin{equation*}
	u_{\varepsilon}\to v\qquad\mbox{ strongly in }W^{1, p}\left(B_{4r}\right),
\end{equation*} 

and

\begin{equation*}
	Du_{\varepsilon}\to Dv\qquad\mbox{ almost everywhere in }B_{4r},
\end{equation*} 

as $\varepsilon\to0$.\\
Since, by virtue of the continuity of $\xi\mapsto DV_p(\xi)$, we also have

$$
DV_p\left(Du_{\varepsilon}\right)\to DV_p\left(Dv\right)\qquad\mbox{ almost everywhere in }B_{4r},\mbox{ as }\varepsilon\to0, 
$$

using Fatou's Lemma in \eqref{DVpeps}, we get

\begin{eqnarray}\label{DVpinfty}
&&\int_{B_{\frac{r}{2}}}\left|DV_p\left(Dv(x)\right)\right|^2dx\cr\cr
&\le&\frac{c\left\Arrowvert u\right\Arrowvert_{L^\infty\left(B_{4r}\right)}}{r^{\frac{p+2}{p}}}\left[\int_{B_{4r}} \left(\mu^2+\left|Dv\left(x\right)\right|^2\right)^\frac{p}{2}dx\right.\cr\cr
&&+\left.\int_{B_{r}}g^{p+2}(x)dx+\int_{B_r}\left|f(x)\right|^\frac{p+2}{p}dx+\left|B_r\right|+1\right],
\end{eqnarray}

The last step to get the conclusion consists in proving that $u=v$ a.e. on $B_{\tilde{R}}$.\\
Since we have

\begin{equation*}
\int_{B_{\tilde{R}}}\left|f_\varepsilon(x)\cdot u(x)-f(x)\cdot u(x)\right|dx\le\left\Arrowvert u\right\Arrowvert_{L^\infty\left(B_{\tilde{R}}\right)}\int_{B_{\tilde{R}}}\left|f_\varepsilon(x)-f(x)\right|dx, 
\end{equation*}

by virtue of \eqref{convfinf1}, we get

\begin{equation}\label{lasteps0*}
	\lim_{\varepsilon\to 0}\int_{B_{\tilde{R}}}f_\varepsilon(x)\cdot u(x)dx=\int_{B_{\tilde{R}}}f(x)\cdot u(x)dx.
\end{equation}

Using the minimality of $u$, the lower semicontinuity of the functional $\mathcal{F}$, the minimality of $u_{m, \varepsilon}$ for $\mathcal{F}_{m,\varepsilon}$ and the lower semicontinuity of this functional and recalling \eqref{afix}, we get
\begin{eqnarray}\label{semicont}
	&&\int_{B_{\tilde{R}}}\left[F\left(x, Du(x)\right)-f(x)\cdot u(x)\right] dx\cr\cr
	&\le&\int_{B_{\tilde{R}}}\left[F\left(x, Dv(x)\right)-f(x)\cdot v(x)\right] dx\cr\cr
	&\le&\liminf_{\varepsilon\to 0}\int_{B_{\tilde{R}}}\left[F_\varepsilon\left(x, Du_\varepsilon(x)\right)-f_\varepsilon(x)\cdot u_\varepsilon(x)\right] dx\cr\cr
	&\le&\liminf_{\varepsilon\to 0}\liminf_{m\to\infty}\int_{B_{\tilde{R}}}\left[F_\varepsilon\left(x, Du_{m, \varepsilon}(x)\right)-f_\varepsilon(x)\cdot u_{m, \varepsilon}(x)\right]dx\cr\cr
	&\le&\liminf_{\varepsilon\to 0}\liminf_{m\to\infty}\int_{B_{\tilde{R}}}\left[F_\varepsilon\left(x, Du_{m, \varepsilon}(x)\right)-f_\varepsilon(x)\cdot u_{m, \varepsilon}(x)+\left(\left|u_{m, \varepsilon}(x)\right|-a\right)_+^{2m}\right]dx\cr\cr
	&\le&\liminf_{\varepsilon\to 0}\int_{B_{\tilde{R}}}\left[F_\varepsilon\left(x, Du(x)\right)-f_\varepsilon(x)\cdot u(x)\right]dx\cr\cr
	&=&\int_{B_{\tilde{R}}}\left[F\left(x, Du(x)\right)-f(x)\cdot u(x)\right]dx,
\end{eqnarray}

where the last equality is a consequence of \eqref{convFm} and \eqref{lasteps0*}.
Therefore, all the inequalities in \eqref{semicont} hold as equalities, and we get

$$
\mathcal{F}\left(u, B_{\tilde{R}}\right)=\mathcal{F}\left(v, B_{\tilde{R}}\right).
$$

So, by virtue of the strict convexity of $F$ with respect to the gradient variable, this implies $u=v$ a.e. on $B_{\tilde{R}}$.
By virtue of \eqref{DVpinfty} and a standard covering argument, we get \eqref{inftyestimate}.
\end{proof}

We conclude this section with some consequences of Theorem \ref{inftythm}, that can be proved applying Lemma \ref{differentiabilitylemma} and estimate \eqref{2.2GP} rispectively. 

\begin{corollary}\label{corollaryinf}
	Let $\Omega\subset\R^n$ be a bounded open set and $1<p<2$.\\
	Let $u\in W^{1, p}_{\loc}\left(\Omega, \R^N\right)\cap L^{\infty}_{\loc}\left(\Omega, \R^N\right)$ be a local minimizer of the functional \eqref{modenergy}, under the assumptions \eqref{F1}--\eqref{F4}, with  
	
	$$f\in L^{\frac{p+2}{p}}_{\loc}\left(\Omega\right)\qquad\mbox{ and }\qquad g\in L^{p+2}_{\loc}\left(\Omega\right).$$
	
	Then $u\in W^{2, p}_{\loc}\left(\Omega\right)$ and $Du\in L^{p+2}_{\loc}\left(\Omega\right)$.
\end{corollary}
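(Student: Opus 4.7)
The plan is to combine two nested approximations. First, regularize the data by convolution with a standard mollifier $\phi_\varepsilon$, producing smooth coefficients $F_\varepsilon$, $f_\varepsilon$, $g_\varepsilon$ as in \eqref{Fepsdef}--\eqref{gepsdef}; these satisfy \eqref{F1eps}--\eqref{F4eps} together with the strong convergences $f_\varepsilon\to f$ in $L^{(p+2)/p}$ and $g_\varepsilon\to g$ in $L^{p+2}$. Second, for each $m>1$, I add the penalization term $(|w|-a)_+^{2m}$ with $a:=\|u\|_{L^\infty(B_{\tilde R})}$, thereby defining $\mathcal{F}_{m,\varepsilon}$ as in \eqref{modenergymeps}, and take $u_{m,\varepsilon}$ to be its minimizer in $(u+W^{1,p}_0(B_{\tilde R}))\cap L^{2m}(B_{\tilde R})$. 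The choice $a=\|u\|_{L^\infty}$ is crucial because it makes the competitor $u$ annihilate the penalization term in the comparison $\mathcal{F}_{m,\varepsilon}(u_{m,\varepsilon})\le\mathcal{F}_{m,\varepsilon}(u)$, so the usual manipulation produces a bound for both $\int(\mu^2+|Du_{m,\varepsilon}|^2)^{p/2}$ and $\int(|u_{m,\varepsilon}|-a)_+^{2m}$ that is controlled \emph{independently of $m$}, provided one uses $f_\varepsilon$ in $L^{2m(p+2)/(p(2m-1))}$ and exploits that this exponent decreases to $(p+2)/p$ as $m\to\infty$.

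The core of the argument is to apply Theorem \ref{approxmthm} to $u_{m,\varepsilon}$, which yields \eqref{approxmestimate}. The main obstacle is the factor $\bigl(\int_{B_{4r}}|u_{m,\varepsilon}|^{2m}\bigr)^{1/(m+1)}$: I must show it stays bounded by $c\|u\|_{L^\infty(B_{4r})}^2$ as $m\to\infty$. To achieve this, I split $|u_{m,\varepsilon}|^{2m}\le c\,a^{2m}+c\,(|u_{m,\varepsilon}|-a)_+^{2m}$; the first piece contributes $c\,a^{2m}|B_{4r}|$, while the second is uniformly bounded in $m$ by the comparison above. Taking the $2m$-th root makes both contributions converge to $\|u\|_{L^\infty(B_{4r})}$ as $m\to\infty$ (since $C^{1/(2m)}\to 1$), so raising to the power $2m/(m+1)\to 2$ gives
\begin{equation*}
\limsup_{m\to\infty}\Bigl(\int_{B_{4r}}|u_{m,\varepsilon}(x)|^{2m}dx\Bigr)^{\frac{1}{m+1}}\le c\,\|u\|_{L^\infty(B_{4r})}^2.
\end{equation*}
Analogously, the exponents $\frac{2m(p+2)}{2m-p}$ and $\frac{2m(p+2)}{2mp+p-2}$ decrease to $p+2$ and $(p+2)/p$, so the corresponding integrals of $g_\varepsilon$ and $f_\varepsilon$ converge to the quantities appearing in \eqref{inftyestimate}.

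With these controls in hand, the right-hand side of \eqref{approxmestimate} applied to $u_{m,\varepsilon}$ stays bounded as $m\to\infty$, and Lemma \ref{differentiabilitylemma} upgrades this to a uniform $W^{2,p}$ bound on $\{u_{m,\varepsilon}\}_m$. Up to subsequences, $u_{m,\varepsilon}\to u_\varepsilon$ strongly in $W^{1,p}(B_{4r})$ and almost everywhere, so the continuity of $\xi\mapsto DV_p(\xi)$ and Fatou's lemma transfer the estimate to $u_\varepsilon$, giving a bound that is in turn uniform in $\varepsilon$. I then repeat the same compactness/Fatou scheme for the limit $\varepsilon\to 0$, obtaining some $v\in W^{1,p}_{\loc}(B_{\tilde R})$ satisfying precisely the right-hand side of \eqref{inftyestimate}.

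The last step is to identify $v=u$ a.e.\ in $B_{\tilde R}$. Using the minimality of $u$, the lower semicontinuity of $\mathcal{F}$, the minimality of $u_{m,\varepsilon}$ for $\mathcal{F}_{m,\varepsilon}$ (tested against the original $u$), together with \eqref{convFm} and the elementary bound $\int|f_\varepsilon-f|\,|u|\le\|u\|_{L^\infty}\int|f_\varepsilon-f|\to 0$, one obtains a chain of inequalities that must collapse to equalities, giving $\mathcal{F}(u,B_{\tilde R})=\mathcal{F}(v,B_{\tilde R})$; strict convexity of $F$ in the gradient variable then forces $v=u$. A standard covering argument yields \eqref{inftyestimate}. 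The genuinely hard part is the uniform-in-$m$ control of the $L^{2m}$-norm factor: without the precise matching between $a=\|u\|_{L^\infty}$ and the $2m$-penalization, the estimate \eqref{approxmestimate} degenerates as $m\to\infty$ and the whole scheme breaks down.
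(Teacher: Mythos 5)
You have essentially re-derived Theorem \ref{inftythm} rather than proved Corollary \ref{corollaryinf}. Your argument tracks the paper's proof of Theorem \ref{inftythm} faithfully — the penalization with $a=\|u\|_{L^\infty}$, the nested $m\to\infty$ then $\varepsilon\to 0$ limits, the uniform control of $\bigl(\int|u_{m,\varepsilon}|^{2m}\bigr)^{1/(m+1)}$, Fatou's lemma, and identification of the limit via strict convexity — and concludes with the estimate \eqref{inftyestimate}, i.e.\ $V_p(Du)\in W^{1,2}_{\loc}(\Omega)$. That is the conclusion of Theorem \ref{inftythm}, which the paper already supplies; the corollary asks for two further statements, $u\in W^{2,p}_{\loc}(\Omega)$ and $Du\in L^{p+2}_{\loc}(\Omega)$, and you never derive either one.

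The missing steps are exactly the two tools the paper names for this corollary. First, from $V_p(Du)\in W^{1,2}_{\loc}(\Omega)$ and $u\in W^{1,p}_{\loc}(\Omega)$, Lemma \ref{differentiabilitylemma} gives $u\in W^{2,p}_{\loc}(\Omega)$ (you invoke this lemma once, inside the approximation scheme, to get a uniform $W^{2,p}$ bound on $\{u_{m,\varepsilon}\}$, but you never apply it to $u$ itself at the end). Second, since $u\in L^\infty_{\loc}(\Omega)$, the Gagliardo--Nirenberg inequality \eqref{2.2GP} — equivalently Remark \ref{rmk3} — upgrades $u\in W^{2,p}_{\loc}$ with the weighted second-derivative bound to $Du\in L^{p+2}_{\loc}(\Omega)$; this step does not appear anywhere in your argument. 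In short: the paper's proof of the corollary is a two-line deduction from Theorem \ref{inftythm}, Lemma \ref{differentiabilitylemma}, and estimate \eqref{2.2GP}; you redid the hard part (Theorem \ref{inftythm}) but omitted the two easy closing deductions that actually produce the corollary's conclusions.
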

\printbibliography
\end{document}